\documentclass[10pt]{article}
\usepackage{amssymb,stmaryrd,amsmath,amsfonts,amsthm,enumerate,color}
\usepackage[toc,page,title,titletoc,header]{appendix}
\usepackage{graphicx}
\usepackage{upgreek}
\usepackage{algorithm}
\usepackage[noend]{algpseudocode}
\usepackage{savesym}
\usepackage{comment}
\usepackage{mathtools}
\usepackage{mathrsfs}
\usepackage{subfigure}
\usepackage[latin1]{inputenc}
\usepackage[T1]{fontenc}
\usepackage{indentfirst}
\usepackage{eurosym}
\usepackage[shortlabels]{enumitem}
\usepackage{multicol}
\usepackage{tikz-cd} 
\usepackage{bbm}
\usepackage{booktabs}
\usepackage{hyperref}
\usepackage[T1]{fontenc}
\usepackage[dvipsnames]{xcolor}
\usepackage{empheq}
\usepackage[hypcap=false]{caption}

\numberwithin{equation}{section}

\newtheorem{Theorem}{Theorem}[section]
\newtheorem{Lemma}[Theorem]{Lemma}

\newtheorem{Example}[Theorem]{Example}

\newtheorem{Definition}[Theorem]{Definition}

\newtheorem{Remark}[Theorem]{Remark}
\numberwithin{equation}{section}
\def\O{\mathcal{O}}
\def\hbn{\hat{\n}}

\def\G{\mathcal G}
\def\Sr{\mathcal S}

\def\T{\mathcal{T}}
\def\embed{\hookrightarrow}
\def\la{\langle}
\def\ra{\rangle}
\def\D{{\mathcal D}}
\def\B{{\mathcal{B}}}
\def\L{\mathscr{L}}

\def\K{{\mathcal K}}

\def\F{{\mathcal F}}

\def\M{{\mathcal{M}}}

\def\Lom3{L^2(\Omega)^3}

\def\Om{\Omega}

    \def\R{{\mathbb R}}

\def\p{\partial}
 
\def\n{{\bf n}}   \def\A{{\mathcal{ A}}}

\newcommand{\lc}
{\mathrel{\raise2pt\hbox{${\mathop<\limits_{\raise1pt\hbox
{\mbox{$\sim$}}}}$}}}

\newcommand{\gc}
{\mathrel{\raise2pt\hbox{${\mathop>\limits_{\raise1pt\hbox{\mbox{$\sim$}}}}$}}}

\newcommand{\ec}
{\mathrel{\raise2pt\hbox{${\mathop=\limits_{\raise1pt\hbox{\mbox{$\sim$}}}}$}}}

\def\bb{\begin{equation}} \def\ee{\end{equation}}

\def\beqn{\begin{eqnarray}}  \def\eqn{\end{eqnarray}}
\def\beq{\begin{equation}} \def\eeq{\end{equation}}

\def\beqnx{\begin{eqnarray*}} \def\eqnx{\end{eqnarray*}}

\def\bn{\begin{enumerate}} \def\en{\end{enumerate}}

\def\bd{\begin{description}} \def\ed{\end{description}}

\makeatletter

\newenvironment{figurehere}
  {\def\@captype{figure}}
  {}
\makeatother

\title{\bf {Derivative-Free Recovery of a Nonlinearity in a Free-Boundary DCIS Model}}

\author{ 
De-Han Chen\footnote{School of Mathematics and Statistics, and Hubei Key Laboratory of Mathematical Sciences, Central China Normal University, P.O. Box 71010, Wuhan 430079, China ({\tt dhchen@ccnu.edu.cn}). The work of DC was supported by NNSF of China (No. 11701205), and DFG research grant YO 159/5-1, project number: 513566305.}
\and Hongyu Liu\footnote{Corresponding author, Department of Mathematics, City University of Hong Kong, Kowloon, Hong Kong, China ({\tt hongyliu@cityu.edu.hk}).}
\and Keji Liu\footnote{Corresponding author, School of Mathematics, and Dishui Lake Advanced Finance Institute, Shanghai University of Finance and Economics, Shanghai 200433, China ({\tt liu.keji@sufe.edu.cn}).}
}

\begin{document}

\date{}
\maketitle

\begin{abstract}

We investigate an inverse coefficient problem for a multidimensional free-boundary model of ductal carcinoma in situ (DCIS), in which the tumor interface is governed by the nonlinear coupling of nutrient concentration, tissue pressure and curvature, and the unknown nutrient consumption function is recovered from a temporal trace of the nutrient concentration obtained by needle aspiration biopsy. For the forward problem, we establish uniform local well-posedness over an admissible class of consumption functions. The inverse problem is recast as a fixed-point problem: approximating the admissible set by finite-dimensional spaces yields discrete iteration operators, for which we prove the existence of fixed points, and the strong convergence of a subsequence of discrete fixed points to a fixed point of the continuous operator, which solves the inverse problem under a consistency condition. To approximate these fixed points, we develop a homotopy-continuation method combining a linearly convergent Picard iteration with a cubical Sperner search, without differentiating an objective functional or computing an adjoint state. Several numerical experiments on radially symmetric and  non-symmetric DCIS models corroborate the theoretical findings.

\medskip
		
		\noindent{\bf Keywords.} Inverse problem, free boundary, derivative-free recovery, DCIS model. 
		
		\noindent{\bf Mathematics Subject Classification (2020)}: 35R30, 35R35, 47A52, 65J22.

\end{abstract}

\section{Introduction}
Ductal carcinoma in situ (DCIS) is commonly recognized as the earliest detectable phase of breast malignancy, wherein neoplastic epithelial cells undergo uncontrolled multiplication while remaining strictly confined within the ductal lumen by an uninterrupted basement membrane. A visual representation of this architecture is shown in Figure~\ref{fig0}. Due to its non-invasive nature during this stage, DCIS typically does not present with overt clinical symptoms; instead, it is most frequently identified by mammographic imaging, whether through routine population-based screening programs or following the palpation of a suspicious mass.
Prompt therapeutic intervention is essential in the treatment of DCIS, as prolonged inaction may allow the eventual rupture of the ductal wall and subsequent infiltration of malignant cells into the adjacent stromal tissue--a transition that marks the onset of invasive breast carcinoma. In this context, mathematical modeling serves as a powerful prognostic tool, capable of simulating the temporal and spatial evolution of DCIS and offering quantitative forecasts regarding the likelihood of progression from atypical ductal hyperplasia to fully invasive cancer. This predictive capacity enables clinicians to tailor intervention strategies with greater timeliness and specificity.
Historically, Greenspan's pioneering contributions in the 1970s laid the groundwork for the quantitative study of tumor growth \cite{greenspan1972models}. Based on this foundation, Byrne and Chaplain formulated the first dedicated mathematical description of the dynamics of DCIS \cite{byrne1995growth}. Their framework has since undergone extensive refinement and validation over more than thirty years, as documented in a substantial body of literature \cite{ChenFried2003, cui2007well, cui2008, cui2009lie, cui2013, escher2004classical}. Currently, a range of analogous free-boundary problems arising in tumor biology has been proposed and rigorously examined in \cite{ChenFried2003, cui2008, cui2013}.

One typical feature of the DCIS mathematical framework lies in its free boundary condition. This implies that the spatial domain occupied by neoplastic cells is not fixed but evolves temporally, and its evolving interface cannot be predetermined. In addition, this boundary is not known a priori and exhibits a complex dependence on both the local mean curvature and the prevailing nutrient distribution.
Suppose that the tumor region is denoted by $\Omega(t)\subset \R^n$ ($n\geq 2$)  and its boundary 
$$
\p\Omega(t)=\Sigma\cup \Gamma(t) \quad \forall t\geq 0
$$ 
consists of two disjoint parts: a fixed outer boundary 
$\Sigma$  and a moving internal boundary $\Gamma(t)$, see Figure \ref{fig0}(c).  

\begin{figure}[ht]
\vskip -0.3truecm
\centering
% 第一行：三个图均匀分布
\begin{minipage}[t]{0.16\textwidth}
\centering
\includegraphics[clip,width=\textwidth]{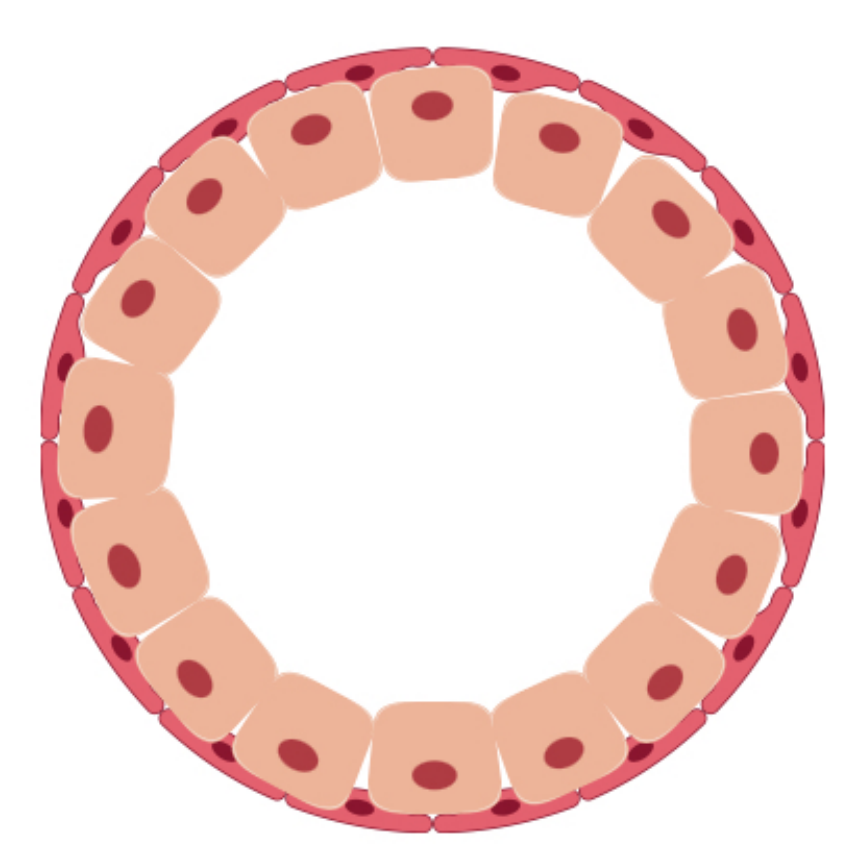}
\centering \vspace{0.3em}
(a)
\end{minipage}%
\hspace{1.4cm}
\begin{minipage}[t]{0.16\textwidth}
\centering
\includegraphics[clip,width=\textwidth]{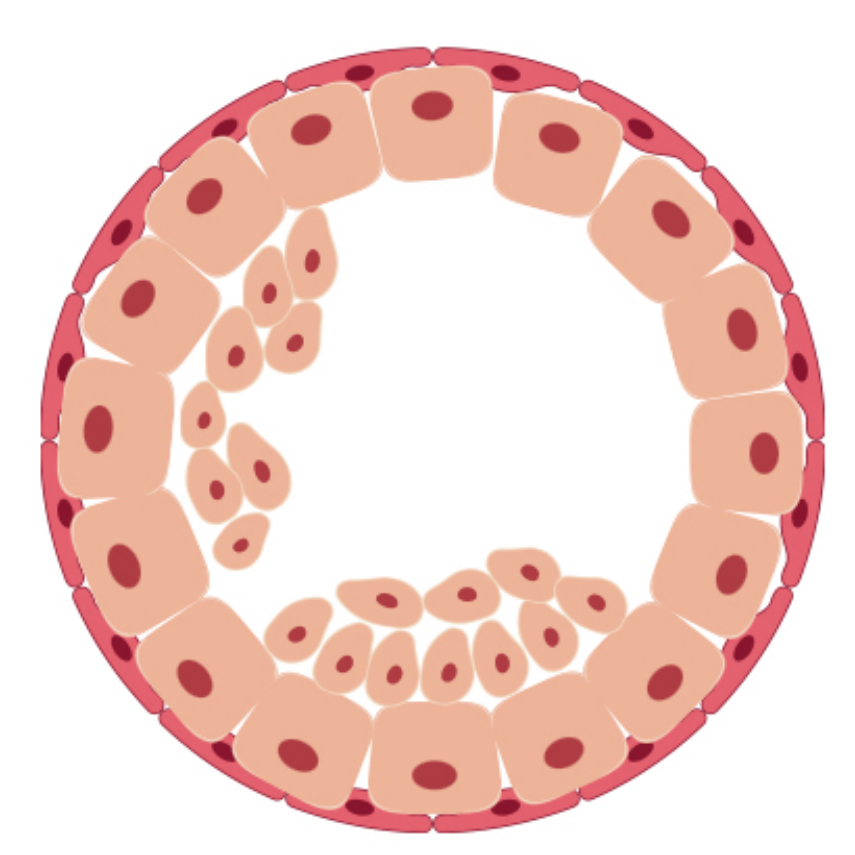}
\vspace{0.3em}
\centering
(b)
\end{minipage}%
\hspace{0.8cm}
\begin{minipage}[t]{0.30\textwidth}
\centering
\begin{tikzpicture}[scale=0.55]
 \useasboundingbox (-2.6,-2.3) rectangle (2.6,2.3); 
    \draw [red,ultra thick] (0,0) circle (2.2cm);
    \fill[orange!40!white] (0,0) circle (2.2cm);
    \fill [white] (0,1.5)
      to [out=0,in=90] (1.5,0)
      to [out=-90,in=45] (1.06,-1.06)
      to [out=-135,in=-45] (0,-0.85)
      to [out=180,in=45] (-0.85,-1)
      to [out=180,in=-45] (-1.06,-0.3)
      to [out=135,in=-90] (-0.75,0.75)
      to [out=90,in=-180] (0,1.5);
    \draw [orange,very thick] (0,1.5)
      to [out=0,in=90] (1.5,0)
      to [out=-90,in=45] (1.06,-1.06)
      to [out=-135,in=-45] (0,-0.85)
      to [out=180,in=45] (-0.85,-1)
      to [out=180,in=-45] (-1.06,-0.3)
      to [out=135,in=-90] (-0.75,0.75)
      to [out=90,in=-180] (0,1.5);
    \draw (-0.3,0) node {$\Gamma(t)$};
    \draw (-0.5,-1.65) node {$\Omega(t)$};
    \draw (2.6,0) node {$\Sigma$};
\end{tikzpicture}
\vspace{0.3em}

(c)
\end{minipage}
\caption{\label{fig0} \emph {The demonstrations of (a) normal duct, (b) DCIS and (c) mathematical model for DCIS.}}
\vskip -0.4truecm
\end{figure}
 The DCIS model consists of the following parabolic-elliptic system for the
concentration of nutrients $\sigma=\sigma(x,t)$ and the pressure of tumor
tissue $\varpi=\varpi(x,t)$:
\begin{empheq}[left=\empheqlbrace]{alignat=3}
\partial_t\sigma-\Delta\sigma &= -f(\sigma), \quad &
-\Delta\varpi &= \mu(\sigma-\sigma^\star)
&&\quad\text{in }\Omega(t),\notag\\[1mm]
V_{\mathbf{n}} &= -\partial_{\mathbf{n}}\varpi, \quad &
\varpi &= \gamma\kappa_\Gamma,\quad \sigma = \sigma_b
&&\quad\text{on }\Gamma(t),\label{eq:DCIS:full}\\[1mm]
\partial_\n\varpi &= 0, \quad &
\partial_\n\sigma &= 0
&&\quad \text{on }\Sigma,\notag\\[1mm]
\Gamma(0) &= \Gamma_e, \quad &
\sigma(\cdot,0) &= \sigma_0(\cdot)
&&\quad \text{in }\Omega_0.\notag
\end{empheq}
where $\mu,\sigma^\star>0$ are known constants, the constant
$\sigma_b$ represents the prescribed nutrient concentration on the moving boundary $\Gamma(t)$, the constant $\gamma>0$ denotes the surface tension, $\Omega_0$ denotes
the domain initially occupied by the tumor, $\Gamma_e$ is the initial shape
of the free boundary, and $\sigma_0$ is the initial nutrient concentration.
The nutrient consumption function $f\in C^\infty(\R^+)$ with $\R^+=[0,\infty)$
satisfies the following basic assumption:
\begin{enumerate}[label =\textup{(H\arabic*)}]
\item \label{H1} $f(0)=0$ and $f'(\sigma)\geq 0$ for $\sigma\geq 0$.
\end{enumerate}
The function \(f\) describes the dependence of the nutrient consumption rate on the local nutrient concentration.
Since $f$ cannot be measured directly through medical examinations, yet carries
significant diagnostic and therapeutic implications, its reconstruction is
essential. To describe the motion of the free boundary, let $V_{\mathbf{n}}$ be the
normal velocity of $\Gamma(t)$, that is, the component of the derivative of
$t\mapsto\Gamma(t)$ in the direction of the outward unit normal $\mathbf{n}$.
For each $t>0$ and $x\in\Gamma(t)$, $\kappa_\Gamma\equiv\kappa_{\Gamma(t)}(x)$
denotes the mean curvature of $\Gamma(t)$ at $x$, signed by the convention that
convex hypersurfaces have positive mean curvature.

This paper studies the following forward problem \ref{FP} and inverse problem \ref{IP}, whose mathematical challenges stem from the moving boundary with no a priori information, as well as the nonlinear coupling between the boundary $\Gamma(t)$, the nutrient concentration $\sigma$ and the pressure $\varpi$.

\begin{enumerate}[{\bf (FP)}]
\item\label{FP} Assume that the nonlinearity $f$ is known and satisfies the condition \ref{H1}, and suppose that the initial and boundary conditions in \eqref{eq:DCIS:full} are known. The forward problem is to determine the free boundary $\Gamma(t)$, the nutrient concentration $\sigma(f)$ and the tumor-tissue pressure $\varpi(f)$ in $\Omega(t)$ for $t\in(0,T]$, where $T$ is some fixed time.  
\end{enumerate}

\begin{enumerate}[{\bf (IP)}]
\item\label{IP} Assume that the boundary conditions are known. The inverse problem is to determine the unknown  nutrient consumption rate {$f=f^\dag$} from the needle aspiration biopsy data, i.e., the nutrient concentration
\begin{equation}\label{observation}
g(t):=\sigma(f^\dag;x_0,t) 
\end{equation}
for some $x_0\in \bigcap_{t\in [0,T]} \overline{\Omega(t)}$. 
\end{enumerate}

The analysis of \ref{FP} and \ref{IP} presents several difficulties arising from the interaction between the unknown nonlinearity, the evolving geometry, and the limited observation. First, the nutrient consumption function \(f\) enters a parabolic equation posed on the solution-dependent domain \(\Omega(t)\), while the motion of the free boundary \(\Gamma(t)\) is nonlocally coupled to the nutrient concentration through the elliptic pressure equation and the curvature-dependent boundary condition. Defining the reconstruction operator on a common observation interval therefore requires a forward theory that is uniform over the entire admissible class of consumption functions. In particular, the existence interval and the estimates for the corresponding free boundaries must be independent of the individual choice of \(f\).
Second, the available data consist of a temporal trace of the nutrient concentration at a fixed spatial point, whereas the reconstruction formula involves the pointwise quantity \(\Delta \sigma(f;x_0,\cdot)\). Establishing that this quantity is well defined and depends continuously on \(f\) requires local spatial regularity beyond that provided directly by the basic well-posedness theory. In addition, the nonlinear coupling induced by the evolving free boundary makes the derivation and computation of an adjoint state particularly involved.
To analyze the forward problem, we employ the Hanzawa transformation to map the moving domain onto the fixed reference domain \(\Omega_0\), following the framework developed for free-boundary problems in \cite{cui2007well,cui2009lie}. The elliptic pressure variable is then represented in terms of the transformed nutrient concentration and the free-boundary parametrization, reducing the original system to a quasilinear parabolic evolution equation on a fixed domain. Maximal \(L^p\)-regularity yields a common existence time \(T^*>0\) for all consumption functions in the prescribed admissible class, together with uniform a priori bounds for the associated free boundaries. Both \(T^*\) and these bounds depend only on the parameters defining the admissible class and on the prescribed model data, rather than on a particular choice of \(f\). A localized maximal-regularity argument further provides the spatial regularity needed to evaluate \(\Delta \sigma(f;x_0,\cdot)\) and establishes continuity of the forward observation map with respect to \(f\).

%Fixed-point and projection formulations of inverse coefficient problems have a long history. The contraction-based approach of Pilant and Rundell \cite{PilRun1986, PilRun1988} reformulates the inverse problem as a fixed-point problem for a map acting on the overposed boundary data, and establishes convergence of the iteration in a H\"older norm \(C^\alpha\) with \(\alpha<1\), on the space of Lipschitz nonlinearities. The argument requires  sufficiently small time horizon, and contraction constants that depend on the size and smoothness of the unknown nonlinearity. More recently, Kaltenbacher and Rundell \cite{KalRun2019} developed a projection-based iteration for reaction--diffusion systems, which naturally leads to a fixed-point iteration for the unknown terms; its applicability is tied to the compatibility between the observation manifold and the dependence direction of the unknown, and a convergence proof of Newton-type methods can only be carried out under structural conditions on the forward operator such as the tangential cone condition, which are in general not verified in these settings. To the best of our knowledge, none of these methods apply to inverse coefficient problems posed on evolving free-boundary domain, where the contraction or projection arguments above cannot be verified over the admissible class. 

Inverse coefficient identification for diffusion-type equations on prescribed domains has received considerable attention, including rigorous reconstruction and discretization error analyses for diffusion coefficients in elliptic and parabolic equations, simultaneous recovery
of diffusion and potential coefficients from two observations, and diffusion-coefficient recovery from terminal measurements \cite{CenZhou2024,JinLuQuanZhou2025, JinZhou2021,ZhangZhangZhou2022}. A related line of research employs fixed-point or projection formulations. Pilant and Rundell \cite{PilRun1986,PilRun1988} established convergence of a fixed-point iteration in a H\"older norm under, among other conditions,
a sufficiently small time horizon and a strict contraction estimate depending on the size and regularity of the unknown nonlinearity. Subsequent fixed-point methods have been analyzed for potential
identification from terminal observations \cite{ZhangZhangZhou2022} and for backward semilinear subdiffusion problems \cite{WuYangZhou2026}, while Kaltenbacher and Rundell \cite{KalRun2019} developed a projection-based iteration for reaction-diffusion systems. Although these works provide rigorous theory for inverse problems on fixed domains, the convergence of their iterative schemes generally relies on contractivity, observation geometry, or structural assumptions on the forward operator, and they
do not address the additional nonlinear coupling between an unknown coefficient and an evolving free boundary.
Such assumptions are difficult to verify for the inverse problem of DCIS. Indeed, the unknown consumption function enters the nutrient equation directly and simultaneously affects the evolution of the free boundary through its nonlinear coupling with the pressure equation. Consequently, a sufficiently strong contraction estimate cannot in
general be guaranteed uniformly over the admissible class. Moreover, the forward problem is posed on a solution-dependent domain, so that the construction and repeated evaluation of an adjoint state would be
particularly involved. To the best of our knowledge, existing fixed-point and projection methods do not cover coefficient identification in this coupled free-boundary setting.

This paper addresses this gap by developing a homotopy-continuation method for the fixed-point formulation of the inverse problem introduced in what follows.
For the inverse problem, we use the nutrient equation along the observation curve to construct an inversion operator and combine it with a projection onto the admissible set. This yields an inversion-projection formulation of the inverse problem as a fixed-point problem, in which the unknown consumption function is recovered as a fixed point of the composite iteration operator. The admissible set is subsequently approximated by finite-dimensional spaces, producing a family of discrete iteration operators.
The method has two regimes. At homotopy levels where the map is contractive, the fixed point is computed by Picard iteration, which converges linearly (Lemma~\ref{lem:28}). For the remaining levels, where contraction may fail, we locate the fixed point by a cubical Sperner search over a box in the discrete admissible set. Each evaluation of the iteration operator consists of a forward solver of the free-boundary problem followed by a finite-dimensional projection; hence the method requires only point evaluations of the iteration operator, and neither differentiation of an objective functional nor computation of an adjoint state are required.
%no differentiation of an objective functional and no adjoint-state computation are needed. 
Its convergence is established whenever the discrete iteration operator is continuous, which is the setting of Theorem~\ref{thm:main}. Also note that the fixed-point formulation used here entails a uniqueness statement for the inverse problem (Lemma~\ref{lem:fixedpoit}).
On the analytical side, we prove that the discrete iteration operator admits a fixed point at every discretization level, and that every sequence of discrete fixed points contains a subsequence converging strongly to a fixed point of the continuous iteration operator (Theorem~\ref{thm:main}). Under the stated consistency condition, the limiting fixed point reproduces the measured nutrient trace and hence solves the inverse problem.

The main contributions of this paper are twofold. On the analytical side, we establish uniform local well-posedness of the DCIS free-boundary model over an admissible class of nondecreasing Lipschitz consumption functions, in the sense that all nonlinearities in the class admit a common existence interval and uniform a priori bounds on the free boundary, with constants independent of the particular choice of \(f\) (Theorem~\ref{thm:exist}). We further prove that the discrete iteration operator admits a fixed point at every discretization level, and that every sequence of discrete fixed points admits a subsequence converging strongly to a fixed point of the continuous iteration operator, which solves the inverse problem under a consistency condition (Theorem~\ref{thm:main}); in contrast to the contraction-based approach of \cite{KalRun2019,PilRun1988}, which requires a vanishing initial value and H\"older-type contractivity of the iteration map, no contractivity assumption is made here. On the methodological side, we propose a novel two-regime homotopy-continuation method that combines a Picard iteration with a cubical Sperner search. Meanwhile, 
the method requires only point evaluation of the iteration operator, so that neither differentiation of an objective functional nor computation of an adjoint state is involved.
The effectiveness of the method is demonstrated by numerical experiments on radially symmetric and non-symmetric DCIS models in Section~\ref{sec:num}.

The remainder of the paper is organized as follows. Section \ref{sec:main} introduces the functional setting and the admissible classes, states the main results for the forward and inverse problems, and presents the Sperner-based homotopy-continuation method. Section \ref{sect:reduction} applies the Hanzawa transformation to reduce the moving-boundary system to a quasilinear evolution equation on the fixed reference domain. Section \ref{sec:pf_main}  establishes uniform forward solvability, proves the required regularity and continuity properties of the forward solution map, and derives the fixed-point characterization of the inverse problem together with the existence and convergence of the discrete fixed points. Finally, some numerical experiments are shown in Section \ref{sec:num}.

\section{Main results}\label{sec:main}

In this section, we state the main results of this work and present the proposed algorithm: the uniform existence result for \ref{FP}, the existence and approximation results for \ref{IP}, and a homotopy-continuation method for computing the fixed points of \ref{IP}.

\subsection{Preliminary}
Let  $\Om\subset \R^n$ be a bounded domain. For $1\leq q\leq \infty$ and $k\in \mathbb N$, let 
$W^{k,q}(\Om)$  be the standard Sobolev space. For $k\in \mathbb N$ and $\alpha\in (0,1)$, $C^{k+\alpha}(\overline{\Om})$ and $C^k(\overline{\Om})$ denote the standard H\"{o}lder space and the space of $k$-times continuously differentiable functions defined on $\overline{\Om}$, respectively.  For $\alpha\in (0,1)$ and 
$k\in \mathbb N$, the little H\"{o}lder space $h^{k+\alpha}(\Omega)$ is the closure of $C^\infty(\overline{\Omega})$ 
under the norm $C^{k+\alpha}(\overline{\Om})$. For any $s\in \R$ and $q,r\in [1,\infty]$,  we write $H^{s}_q(\Omega)$ and $B^{s}_{qr}(\Om)$ as the usual Bessel-potential space and  Besov space on $\Om$, respectively.  For a smooth hypersurface $\Gamma\subset \R^n$ of co-dimension 1 and $0\leq s\leq k$, the Besov space  $B^s_{q r}(\Gamma)$ can be defined  with the use of a partition of unity and the norm in $B^s_{q r}(\R^{n-1})$, and  $C^k(\Gamma)$ denotes the space of $k$-times continuously differentiable functions defined on $\Gamma$.  Let $Z$ be a Banach space and $I$ an interval in $\R$.  For $\eta\in (0,1)$ and $k\in \mathbb N$, $C^\eta(I;Z)$ denotes the Banach space of all 
$\eta$-H\"{o}lder continuous functions with values in $Z$ defined on $I$, and $C^k(I;Z)$ is defined as the Banach space of all $k$-times continuously differentiable $Z$-valued functions defined on $I$. 
Subsequently,  we identify  $C(I;Z)=C^0(I;Z)$. For  $1\leq p\leq \infty$ and $s\in \R$,  we denote by  $L^p(I;Z)$ the usual Bochner-Lebesgue spaces of $Z$-valued functions, and by 
$H^s_p(I;Z)$ the corresponding Bessel potential spaces. We also identify $H^s(I;Z):=H^s_2(I;Z)$ and $L^p(I):=L^p(I;\R)$. For a general domain $Q\subset \R^{n+1}$, $L^p(Q)$, $C(Q)$ and 
$C(\overline{Q})$ denote the conventional function spaces, respectively. 
For two Banach spaces $Z_1$ and $Z_2$, we denote by  $\L(Z_1,Z_2)$ the space of all continuous linear operators from $Z_1$ into $Z_2$,  and abbreviate $\L(Z_1,Z_1)$ as $\L(Z_1)$. For two Banach spaces $Z_1$ and $Z_2$, we write 
$
Z_1\embed Z_2 
$ (resp. $Z_1 \stackrel{d}{\embed} Z_2$)
if $Z_1$ is continuously injected in $Z_2$ (resp. densely and continuously  injected in $Z_2$).

For $x\in \R^n$ and $r>0$,  we write $B_r(x):=\{y\in \R^n\mid \|y-x\|_{\R^n}<r\}$.  Let $r>0$ and $\Gamma$ be a non-empty connected compact hypersurface  bounding some region $\Om\subset \R^n$.  We say that 
$\Gamma$ satisfies the $r$-ball condition if there exists a unit vector ${ d}\in \R^n$ such that $B_{r}(x-{rd})\subset \Om$ and $B_{r}(x+{rd})\subset \R^n\backslash \overline{\Om}$ for all $x\in \Gamma$.  Given a set $A\subset \R^n$, the set 
$$
V_r(A):=\{x\in \R^n\mid \textup{dist}(x,A)<r\}
$$
is called the $r$-tubular neighbourhood of $A$.  For two domains $\Om_1,\Om_2$ in $\R^n$, we write $\Om_1\Subset \Om_2$ if $\overline{\Om}_1\subset \Om_2$.

\subsection{Standing assumptions and theoretical results}
Throughout this paper, we assume that there are two simply connected,  bounded  smooth domains $\Om_\Sigma$ and ${\Om}_\Gamma$ such that 
${\Om}_\Gamma\Subset \Om_\Sigma$ and the initial tumor region $\Om_0:=\Om_\Sigma\backslash \overline{\Om}_\Gamma.$  
As a consequence, the initial boundary  $\p\Om_0$ consists of two clopen subsets $\Sigma:=\p{\Om}_\Sigma$  and $\Gamma_e:=\p{\Om}_\Gamma$. In addition, we set the following assumptions on the initial free boundary $\Gamma_e$ and the initial value  $\sigma_0$:  

\begin{enumerate}[label =\textup{(A\arabic*)}]

\item  \label{A1} {$\Gamma_e$ is $C^\infty$-smooth.} 

\item \label{A2} $\sigma_0\in H^2_q(\Om_0)$ with some $q\in (n,\infty)$, {$\sigma_0>0$ on $\overline{\Omega}_0$} and satisfies the  compatibility conditions.

\end{enumerate}
 \begin{figurehere}
  \begin{center}
  \begin{tikzpicture}
    \draw [red,ultra thick] (0,0) circle (2.2cm);
  \fill[orange!40!white] (0,0) circle (2.2cm);
   \fill [white] (0,1.5) % Draws a line
      to [out=0,in=90] (1.5,0) % out, put is the angle control
      to [out=-90,in=45] (1.06,-1.06)
      to  [out=-135,in=-45] (0,-0.2)
      to  [out=180,in=45] (-0.2,-1)
       to  [out=180,in=-45] (-1.06,-0.3)
       to  [out=135,in=-90] (-0.75,0.75)
       to   [out=90,in=-180] (0,1.5);
     \draw [orange,very thick] (0,1.5) % Draws a line
      to [out=0,in=90] (1.5,0) % out, put is the angle control
      to [out=-90,in=45] (1.06,-1.06)
      to  [out=-135,in=-45] (0,-0.2)
      to  [out=180,in=45] (-0.2,-1)
       to  [out=180,in=-45] (-1.06,-0.3)
       to  [out=135,in=-90] (-0.75,0.75)
       to   [out=90,in=-180] (0,1.5);
        % \draw (0,0) node {$O$};
          \draw (-0.3,0.7) node {$\Omega_\Gamma$};
          \draw (-0.63,0) node {\color{RedOrange}$\Gamma(t)$};
           \draw (2.45,-0.1) node {\color{red}$\Sigma$};
           % \draw (-1.0,2.5) node {$\rho(\frac{\pi}{2})$};
          \draw (-0.75,-1.75) node {$\Omega_\Sigma$};
         % \filldraw[red] (-1.7,0) circle (2pt);
            \draw [blue,ultra thick,dashed] (0,0) circle (1.5cm);
            %\draw (-1.8,0.2) node {$\xdag$};
             \draw (-0.20,1.75) node {\color{blue}$\Gamma_e$};
            % \draw [black,dashed] (0,0) circle (2.5cm);
         \draw[black, thick,  ->] (0,-1.45) -- (0,-0.8);
         \draw (0.2, -1.25) node {$\hat{\n}$};
        \end{tikzpicture}
 \caption{\label{illu}\emph{Illustration for the tumor boundaries.}}
  \end{center}

\end{figurehere}

\begin{Remark}\label{remark:ball}
   If $\Omega$ is a $C^{1,1}$-domain, then it satisfies the $r$-ball condition for all sufficiently small $r > 0$ \textup{(}see e.g. \cite[Theorem 2.6]{dalphin2018uniform}\textup{)}. Hence, since $\Omega_0$ is smooth by \ref{A1}, there exists some $\delta_\Omega > 0$ such that
\begin{equation}\label{ball condition}
 \Omega_0 \; \text{satisfies the } 4\delta_\Omega\text{-ball condition and }  \Omega_0 \setminus \overline{V_{\delta_\Omega}(\Gamma_e)} \neq \emptyset.
\end{equation}

\end{Remark}

  For the sake of brevity, $\Gamma_e$ is assumed to be smooth, which can be further weakened to a surface of $B^{3-1/q}_{qq}$-class by the perturbation theory, i.e., $\Gamma_e$ is close to a smooth hypersurface (cf. \cite{cui2009lie}).  A typical example of  such smooth  $\Gamma_e$ is the cylindrical surface, which represents the healthy duct. Under the assumption \ref{A1},  we will characterize the moving surface $\Gamma(t)$ in terms of {\it normal parameterization} over $\Gamma_e$.  More precisely, let $\hat{\n}$ denote the unit outward normal vector of $\Omega_0$ on $\Gamma_e$, see the black arrow in Figure \ref{illu}.
  If there is a function $\rho\in B^{s}_{qq}(\Gamma_e)$ with some $s>0$ such that  a hypersurface $\Gamma_\rho\subset \R^n$ has the form  
\beq\label{surface-class}
\Gamma_\rho=\{\xi+\rho(\xi)\hat{\n}(\xi)\mid \xi\in \Gamma_e\},
\eeq
then $\Gamma_\rho$ is said to be a $B^{s}_{qq}$-normal parameterization over $\Gamma_e$.

%In this section, we state the main results of this work, namely, the uniform existence result for {\bf FP} and the existence and approximation results for {\bf IP}. 
We first present the following theorem which provides a uniform existence time for the solutions of \ref{FP}, independent of the choice of nonlinearity in the admissible class.

\begin{Theorem}[Uniform Existence for {\bf FP}]\label{thm:exist}
Let $q>n$, let \ref{A1}--\ref{A2} hold, and let $\delta_\Omega>0$ be as in \eqref{ball condition}. Then, for every $M_{\mathscr{A}}>0$, there exist $\delta\in(0,\delta_\Omega)$ and $T^*>0$ such that the set
\[
\mathscr{A}:=\{f\in C_b^1(\R^+)\mid  f'(x)\geq 0\quad \textup{and}\quad\|f\|_{W^{1,\infty}(\mathbb R^+)}\leq M_{\mathscr{A}}\}
\]
satisfies the following assertions:
\begin{enumerate}
\item[\textup{(i)}] For every $f\in \mathscr{A}$, the DCIS model \eqref{eq:DCIS:full} has a unique solution $U(f):=(\sigma(f),\varpi(f),\Gamma(f))$ of class $H^2_q\times H^2_q\times B^{4-1/q}_{qq}$ on $(0,T^*]$, and $\Gamma(t)\subset V_{\delta}(\Gamma_e)$ for $t\in [0,T^*]$, where $V_{\delta}(\Gamma_e)$ is the $\delta$-tubular neighbourhood of $\Gamma_e$.
\item[\textup{(ii)}] For $p\in (4,\infty)$, there exists a constant $C_\infty>0$, independent of $f\in \mathscr{A}$, such that
\[
\|\rho(f)\|_{\mathsf{H}_p^1((0,T^*),(B_{qq}^{4-1/q},B_{qq}^{1-1/q}))}\leq C_\infty.
\]
\item[\textup{(iii)}] The nutrient concentration $\sigma(f)$ is non-negative in $[0,T^*]$.
\end{enumerate}
\end{Theorem}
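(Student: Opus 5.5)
The plan is to follow the Hanzawa-transformation approach of \cite{cui2007well,cui2009lie} while tracking that every constant depends on $f$ only through $M_{\mathscr{A}}$. Fix $\delta\in(0,\delta_\Omega)$ and a smooth cut-off supported in $V_{2\delta}(\Gamma_e)$. For $\rho\in B^{4-1/q}_{qq}(\Gamma_e)$ with $\|\rho\|_{C^1(\Gamma_e)}$ small, we build a diffeomorphism $\Theta_\rho$ of $\overline{\Omega}_0$ onto $\overline{\Omega}_\rho$. It moves $\Gamma_e$ to $\Gamma_\rho$ in the form \eqref{surface-class} and is the identity near $\Sigma$; the ball condition \eqref{ball condition} guarantees it is well defined. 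Pulling back $\sigma$ and $\varpi$ gives $u=\sigma\circ\Theta_\rho$ and $v=\varpi\circ\Theta_\rho$. Then $v$ is obtained by solving an elliptic Dirichlet--Neumann problem with data $\gamma\kappa(\rho)$ and right-hand side $\mu(u-\sigma^\star)$, and we write $v=S(\rho)u+T(\rho)$. With this, the system reduces to the quasilinear evolution equation
\[
\partial_t(u,\rho)+\mathcal A(\rho)(u,\rho)=\mathcal F(u,\rho)-(f(u),0),\qquad (u,\rho)(0)=(\sigma_0,0).
\]
Here $\mathcal A(\rho)$ is the principal part. For $\rho$ it is the third-order nonlocal operator $\rho\mapsto\gamma\,\p_{\n}S(\rho)\kappa(\rho)$, which at $\rho=0$ is a Dirichlet-to-Neumann map composed with the Laplace--Beltrami operator, and for $u$ it is the transformed Laplacian. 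We work in $E_0=L^q(\Omega_0)\times B^{1-1/q}_{qq}(\Gamma_e)$ and $E_1=H^2_q(\Omega_0)\times B^{4-1/q}_{qq}(\Gamma_e)$. The maximal-regularity class is $\mathsf{H}^1_p((0,T),(E_1,E_0))$ with $p>4$, so that the trace space embeds into $C^{1+}$ for $\rho$ (using $q>n$).

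The key steps, in order, are as follows. First, show that $\mathcal A(0)$ has maximal $L^p$-regularity on $E_0$. This holds because the $\rho$-block is a lower-order perturbation of the $u$-independent third-order pseudodifferential operator known to generate an analytic semigroup (as in \cite{cui2009lie}), and the coupling is triangular. Second, establish that $\rho\mapsto\mathcal A(\rho)\in\L(E_1,E_0)$ and $\mathcal F$ are locally Lipschitz (indeed real analytic) on a neighbourhood of $0$ in the trace space. Third, treat $f$ as part of the lower-order term. Since $f\in\mathscr{A}$, the superposition map $u\mapsto f(u)$ satisfies $\|f(u_1)-f(u_2)\|_{L^q}\le M_{\mathscr{A}}\|u_1-u_2\|_{L^q}$ and $\|f(u)\|_{L^q}\le M_{\mathscr{A}}|\Omega_0|^{1/q}$. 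Its contribution to the fixed-point map on a ball of radius $R$ in $\mathsf{H}^1_p((0,T),(E_1,E_0))$ is therefore bounded by $C T^{1/p}M_{\mathscr{A}}$. Fourth, freeze the coefficients at $\rho=0$ and apply the standard Banach fixed-point argument (Clément--Li / Prüss type). Choosing $R$ from the linear data and then $T^*$ small, depending only on $R$, $M_{\mathscr{A}}$ and the data, yields a unique solution together with the bound (ii) with $C_\infty=R$. Since $\mathsf{H}^1_p\embed C([0,T^*];(E_0,E_1)_{1-1/p,p})\embed C([0,T^*];C^1(\Gamma_e))$, and the difference from the initial value is small in time uniformly, we may shrink $T^*$ so that $\|\rho(t)\|_\infty<\delta$. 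This gives $\Gamma(t)\subset V_\delta(\Gamma_e)$. Transforming back gives (i) with the stated regularity.

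For (iii), work on the moving domain, or equivalently with the transformed parabolic operator. Since $f(0)=0$ and $f$ is Lipschitz, write $f(\sigma)=c(x,t)\sigma$ with $0\le c\le M_{\mathscr{A}}$ bounded. Then $\sigma$ solves a linear parabolic equation with bounded zeroth-order coefficient, Dirichlet data $\sigma_b\ge0$ on $\Gamma(t)$ (we assume $\sigma_b\geq 0$ as a nutrient level), homogeneous Neumann data on $\Sigma$, and $\sigma_0>0$. The weak maximum principle, applied to $e^{-M_{\mathscr{A}}t}\sigma^-$ tested against the equation, gives $\sigma\ge0$. This requires extending $f$ to negative arguments by $f(s)=0$ for $s<0$ (or oddly) before solving, and then checking a posteriori that the extension is never used.

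The main obstacle I expect is the first step together with the uniformity in the fourth. The maximal regularity of the fully coupled linearisation, with the nonlocal third-order boundary operator, must hold with constants independent of $f$, and the contraction must close with one $T^*$ for the whole class. I would handle this by keeping $f$ entirely out of the linear operator. In particular, $f'$ is never linearised, so only $\|f\|_{W^{1,\infty}}$ enters, through the Lipschitz bound, and the monotonicity $f'\ge0$ is used only in (iii).
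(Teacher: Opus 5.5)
For (i)--(ii) your route is the paper's. You use the Hanzawa reduction to the upper-triangular quasilinear system on $L^q(\Omega_0)\times B^{1-1/q}_{qq}(\Gamma_e)$, maximal $L^p$-regularity of the frozen operator, and you keep $f$ entirely in the right-hand side so that only the bound $M_{\mathscr A}$ enters. A Cl\'ement--Li/Pr\"uss contraction then gives a common $T^*$, as in \eqref{unif:Lip:A}--\eqref{unif:Lip:F} and the paper's appeal to \cite{ChenLiu2026}.

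One caution on your first step. Generation of an analytic semigroup on $B^{1-1/q}_{qq}(\Gamma_e)$ does not by itself give maximal $L^p$-regularity. You need $\mathcal R$-sectoriality, which is what the paper imports from \cite{Abels2021,ChenLiu2026}, or a Da Prato--Grisvard-type real-interpolation argument combined with $p$-independence. The setting of \cite{cui2009lie} (little H\"older spaces, continuous maximal regularity) does not transfer verbatim.

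The genuine gap is in (iii), and it also affects your extension of $f$. You use $f(0)=0$, but the class $\mathscr A$ in the theorem does not impose it; the remark after the theorem explicitly dispenses with it.

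If $f(0)>0$, then $f(\sigma)=f(0)+c\,\sigma$ with $0\le c\le M_{\mathscr A}$, and the constant $-f(0)<0$ acts as a sink. Testing with $\sigma^-$ therefore does not close, and nonnegativity is no longer a structural consequence of the maximum principle. For instance, the admissible $f\equiv M_{\mathscr A}$ drives $\sigma$ below zero away from $\Gamma(t)$ after times of order $\max\sigma_0/M_{\mathscr A}$ when $M_{\mathscr A}$ is large. Moreover, both the zero and the odd extension to $(-\infty,0)$ are discontinuous at $0$ when $f(0)\neq 0$. That destroys the uniform Lipschitz bound your third step relies on.

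The repair has two parts.
\begin{enumerate}
\item[\textup{(a)}] Extend $f$ by the constant $f(0)$ on $(-\infty,0)$. This extension is monotone and has the same $W^{1,\infty}$-norm, so the existence argument is unaffected.
\item[\textup{(b)}] Prove (iii) as a short-time statement. By \ref{A2}, $q>n$ and compatibility (which gives $\sigma_b=\sigma_0|_{\Gamma_e}$), the constant $c_*:=\min\{\min_{\overline\Omega_0}\sigma_0,\sigma_b\}$ is positive. Since $|f|\le M_{\mathscr A}$, the function $\underline\sigma(t):=c_*-M_{\mathscr A}t$ satisfies $\partial_t\underline\sigma-\Delta\underline\sigma+f(\underline\sigma)\le 0$. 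It has zero normal derivative on $\Sigma$, lies below $\sigma_b$ on $\Gamma(t)$, and lies below $\sigma_0$ at $t=0$.
\end{enumerate}
A linear comparison with the bounded coefficient $(f(\underline\sigma)-f(\sigma))/(\underline\sigma-\sigma)$, exactly as in your $e^{-M_{\mathscr A}t}$-weighted argument, then gives $\sigma\ge c_*-M_{\mathscr A}t\ge0$. This holds provided you also impose $T^*\le c_*/M_{\mathscr A}$.

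This extra restriction depends only on $M_{\mathscr A}$ and the data, so uniformity in $f$ is preserved. When $f(0)\le 0$, your original argument goes through with $f(\sigma)=f(0)+c\,\sigma$.
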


\begin{Remark}

~\begin{enumerate}

\item[\textup{(i)}]  The a priori assumption $f(0)=0$ is not imposed on the admissible set, since our analysis below does not require it; for the sake of exposition, we dispense with it. Our main results remain valid under this additional restriction.

\item[\textup{(ii)}]  In the proof of Theorem \ref{thm:exist}, we shall first extend $f$ trivially to $\mathbb R$.
Then, we will prove that the resulting solution $\sigma(f)$ is always non-negative, and thus only 
the value of $f$ on $\mathbb R^+$ matters.

\end{enumerate}

\end{Remark}

For notational simplicity, we impose $T:=T^*$. We impose the following regularity and monotonicity assumption on the measurement $g$:
\begin{equation*}\label{eq:g}
g\in C^1([0,T])\quad\text{and}\quad g'(t)\geq g_0>0\;\;\text{for all}\;\;t\in[0,T].
\end{equation*}
To recover the unknown nonlinearity $f\in\mathscr{A}$ from $g$, we set
$$
I:=[g(0),g(T)]\quad\text{and}\quad I^{\mathrm{ext}}:=\R^+\setminus I.
$$
Since the measurement determines $f$ only on the interval $I$, we restrict the admissible class to
\begin{equation*}
\mathscr{A}_I:=\{f\in C^1(I) \mid f'(x)\geq 0\quad\textup{with}\quad\|f\|_{W^{1,\infty}(\mathbb R^+)}\leq \epsilon_{\mathscr{A}} M_{\mathscr{A}}\}
\quad \text{for some}\quad \epsilon_{\mathscr{A}}\in (0,1].
\end{equation*}
We further assume that there exists an linear  extension operator $\mathscr{E}:\mathscr{A}_I\to C^1_b(\mathbb R^+)$ such that

\begin{enumerate}[label=\textup{(E\arabic*)}, ref=\textup{(E\arabic*)}]

\item \label{E1} $(\mathscr{E} f)|_I=f$ for all $f\in\mathscr{A}_I$. 

\item \label{E2} $\mathscr{E} f$ is non-decreasing whenever $f$ is non-decreasing.
 
\item \label{E3}  $\mathscr{E}\in \mathscr{L}(C^1(I);C^1_b(\mathbb R^+))$ with $\|\mathscr{E}\|_{\mathscr{L}(W^{1,\infty}(I);W^{1,\infty}(\mathbb R^+))}\leq 1/\epsilon_{\mathscr{A}}$, and $\|\mathscr{E}\|_{\mathscr{L}(W^{2,\infty}(I);W^{2,\infty}(\mathbb R^+))}<\infty$.

\end{enumerate}

\begin{Remark}

~\begin{enumerate}

\item[\textup{(a)}] The conditions \ref{E1} -- \ref{E3} ensure that $\mathscr{E} \mathscr{A}_I\subset  \mathscr{A}$. 

\item[\textup{(b)}] Let us first give an explicit construction.  Let $a_I:=g(0)$ and $b_I:=g(T)$. For $f\in \mathscr{A}_I$, we define the extension $\mathscr{E} f$ by $(\mathscr{E} f)(x)=f(x)$ for $x\in I$, $(\mathscr{E} f)(x)=f(b_I)+f'(b_I)(1-e^{-(x-b_I)})$ for $x\geq b_I$, and $(\mathscr{E} f)(x)=f(a_I)+f'(a_I)(e^{x-a_I}-1)$ for $x\leq a_I$.
    
\end{enumerate}

\end{Remark}

For every $f\in \mathscr{A}_I$, we consider the following truncated DCIS model:
\begin{empheq}[left=\empheqlbrace]{alignat=3}
\partial_t\sigma-\Delta\sigma &= -(\mathscr{E} f)(\sigma), \quad &
-\Delta\varpi &= \mu(\sigma-\sigma^\star)
&&\quad\text{in }\Omega(t),\notag\\[1mm]
V_{\mathbf{n}} &= -\partial_{\mathbf{n}}\varpi, \quad &
\varpi &= \gamma\kappa_\Gamma,\quad \sigma = \sigma_b
&&\quad\text{on }\Gamma(t),\label{eq:DCIS:full:trucated} \\[1mm]
\partial_\n\varpi &= 0, \quad &
\partial_\n \sigma &= 0
&&\quad \text{on }\Sigma,\notag\\[1mm]
\Gamma(0) &= \Gamma_e, \quad &
\sigma(\cdot,0) &= \sigma_0(\cdot)
&&\quad \text{in }\Omega_0.\notag
\end{empheq}

\begin{Remark}
If $\big(f,(\sigma(f),\varpi(f),\Gamma(f))\big)$  solves the truncated inverse problem \eqref{observation} and \eqref{eq:DCIS:full:trucated}, then it also solves the inverse problem \eqref{eq:DCIS:full}--\eqref{observation} with $f$ replaced by $\mathscr{E}f$.
\end{Remark}

To make the reconstruction computationally tractable, we next discretize the admissible set. This is achieved by introducing its finite-dimensional approximation of the admissible set $\mathscr{A}$ as follows:

\begin{Definition}\label{def:admissible}
 The approximating spaces $\{\mathscr{S}_{N}\}_{N\in \mathbb N}\subset C^2(I)$ satisfy the following properties. 
\begin{enumerate}

\item[\textup{(i)}] $\textup{dim}(\mathscr{S}_{N})=N$ and $\mathscr{S}_{N+1}\subset \mathscr{S}_{N}$  for every $N\in \mathbb N$.

\item[\textup{(ii)}] For every $k\in \{1,2\}$, we have the following inverse property: 
\begin{equation*}
 \|f\|_{C^k(I)}\leq C_{k,\mathscr{S}} \|f\|_{L^2(I)}   \quad \forall f\in \mathscr{S}_{N}. 
\end{equation*}

\item[\textup{(iii)}] $\bigcup_{N\in \mathbb N}  \mathscr{S}_{N}$ is dense in $L^2(I)$. 

\end{enumerate}
\end{Definition}

With the discretized admissible sets, we can define the reconstruction operators.  
In the rest of this paper, we define $\mathscr{A}^N_I =\mathscr{A}_I\cap \mathscr{S}_N$, which is a convex set.  For $f\in \mathscr{A}_I$, we denote by $U(f)=(\sigma(f),\varpi(f),\Gamma(f))$ the solution of the DCIS problem \eqref{eq:DCIS:full} and define the \emph{solving operator}
\begin{equation}\label{eq:Sdef}
\mathbb S(f)(t):=-g'(t)+\Delta \sigma(f;x_0,t)\;\;\text{a.e.}\;\,t\in J:=[0,T],
\end{equation}
and the \emph{inversion--projection operator} $\mathbb{P}_N:L^2(J)\to  \mathscr{A}^N_I$ given by
\begin{equation}\label{eq:Pdef}
 \mathbb{P}_N(y):=\arg\min_{f\in \mathscr{A}^N_I}\|f\circ g-y\|_{L^2(J)}^2,
\end{equation}
with $\mathbb P_\infty$ defined by the same formula with $\mathscr{A}^N_I$ replaced by $\mathscr{A}_I$. For the well-posedness of $\mathbb S$ and $\mathbb{P}_N$, we refer to 
Lemma \ref{lem:S} and \ref{lem:P}.
The regularized iteration operator and its formal limit are
\[
\mathbb{T}_N:= \mathbb{P}_N\circ \mathbb S:\mathscr{A}^N_I\to \mathscr{A}^N_I\quad \textup{and}\quad
\mathbb{T}:=\mathbb P_\infty\circ \mathbb S:\mathscr{A}_I \to \mathscr{A}_I.
\]
The connection between the fixed points and the solutions of \ref{IP} is given by the following lemma.
\begin{Lemma}[Equivalence]\label{lem:fixedpoit}
Let $f\in \mathscr{A}_I$ be a fixed point of $\mathbb{T}$ with $\mathbb S(f)\circ  g^{-1}\in \mathscr{A}_I$. Then $\sigma(f; x_0,t)=g(t)$ for all $t\in[0,T]$. In particular, the couple
$\big(f,(\sigma(f),\varpi(f),\Gamma(f))\big)$ solves the inverse problem \eqref{eq:DCIS:full}--\eqref{observation}.
\end{Lemma}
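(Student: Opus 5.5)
The argument combines the fixed-point identity with the nutrient equation evaluated along the observation point. Set $h:=\mathbb S(f)\circ g^{-1}$. By hypothesis $h\in\mathscr{A}_I$, and then $h\circ g=\mathbb S(f)$ in $L^2(J)$. The projection in \eqref{eq:Pdef} with $\mathscr{A}_I$ is therefore attained at $h$ with minimum value $0$. The problem is strictly convex in $f\circ g$, since $g$ is a $C^1$ diffeomorphism from $J$ onto $I$ with $g'\ge g_0$. Hence the composition $f\mapsto f\circ g$ is injective on $L^2$, and the change of variables $s=g(t)$ shows $\|f\circ g-y\|_{L^2(J)}$ is equivalent to $\|f-y\circ g^{-1}\|_{L^2(I)}$ weighted by $1/g'$. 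So the minimizer is unique, and $\mathbb P_\infty(\mathbb S(f))=h$, using Lemma~\ref{lem:P} for well-posedness. The fixed-point property $\mathbb T(f)=f$ then gives $f=h$, that is,
\[
f(g(t))=-g'(t)+\Delta\sigma(f;x_0,t)\quad\text{for a.e. } t\in J,
\]
and by continuity of both sides (Lemma~\ref{lem:S}) for all $t$.

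Next I use the PDE. Let $\sigma:=\sigma(f)$ solve the truncated model \eqref{eq:DCIS:full:trucated}, and set $u(t):=\sigma(f;x_0,t)$. Since $x_0\in\bigcap_t\overline{\Omega(t)}$, I need the local regularity from Lemma~\ref{lem:S}, which makes $\Delta\sigma(f;x_0,\cdot)$ well defined and $\partial_t\sigma$ evaluable at $x_0$. Evaluating the equation at $x_0$ gives
\[
u'(t)=\Delta\sigma(f;x_0,t)-(\mathscr{E}f)(u(t)).
\]
Substituting the identity from the first step yields $g'(t)=\Delta\sigma(f;x_0,t)-f(g(t))$. Subtracting, with $w:=u-g$:
\[
w'(t)=-(\mathscr{E}f)(u(t))+(\mathscr{E}f)(g(t)),
\]
where I used $(\mathscr{E}f)|_I=f$ from \ref{E1} and $g(t)\in I$. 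The initial value is $w(0)=\sigma_0(x_0)-g(0)=0$, because the data are the trace of the true solution starting from the same $\sigma_0$, so $g(0)=\sigma_0(x_0)$.

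Since $\mathscr{E}f\in C_b^1$ is Lipschitz with constant at most $M_{\mathscr{A}}$ by \ref{E3}, we get $|w'|\le M_{\mathscr{A}}|w|$, and Gronwall gives $w\equiv0$. Thus $\sigma(f;x_0,t)=g(t)$ on $[0,T]$. By the Remark following \eqref{eq:DCIS:full:trucated}, the couple then solves \eqref{eq:DCIS:full}--\eqref{observation} with $f$ replaced by $\mathscr{E}f$.

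The main obstacle is justifying the pointwise evaluation of the equation at $x_0$. This requires $t\mapsto\sigma(x_0,t)$ to be absolutely continuous, with derivative $\partial_t\sigma(x_0,t)$, and $\Delta\sigma(x_0,\cdot)$ to be defined. This is the content of the localized maximal-regularity result (Lemma~\ref{lem:S}), which I would invoke. If $x_0$ lies on the moving boundary, the Dirichlet condition $\sigma=\sigma_b$ would make $u$ constant, which contradicts $g'\ge g_0>0$. So I would note that $x_0$ is effectively interior and away from $\Gamma(t)$, and that case requires no separate treatment.
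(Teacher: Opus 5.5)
Your proof is correct and follows the paper's argument. The consistency hypothesis turns the fixed-point identity into $f(g(t))=\Delta\sigma(f;x_0,t)-g'(t)$ a.e.\ (a step you justify more explicitly than the paper does). The nutrient equation is then evaluated at $x_0$ using the local regularity of Lemma~\ref{lem:S}, and $w=\sigma(f;x_0,\cdot)-g$ with $w(0)=0$ is shown to vanish.

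The only real difference is the last step. The paper uses the monotonicity \ref{E2} of $\mathscr{E}f$ to get $(w^2)'=2ww'\le 0$, while you use the Lipschitz bound from \ref{E3} and Gronwall; both are valid.

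Your side claims that the identity holds for every $t$ ``by continuity'' and that $g'>0$ rules out $x_0\in\Gamma(t)$ are not needed. The a.e.\ identity and the setting of Lemma~\ref{lem:S} suffice.
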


The next result establishes the existence of the fixed points of $\mathbb{T}_N$ and of $\mathbb{T}$, together with the strong convergence of discrete fixed points to a fixed point of $\mathbb{T}$.

\begin{Theorem}[Existence and Convergence for the fixed points]\label{thm:main}
~\begin{enumerate}
\item[\textup{(i)}] For every $N\in \mathbb N$, the set of the fixed points of  $\mathbb{T}_N$ is nonempty. The set of the fixed points of $\mathbb{T}$ is also non-empty. 

\item[\textup{(ii)}] Let $\{f^*_N\}_{N\in \mathbb N}$ be a sequence such that
$f^*_N$ is a fixed point of $\mathbb{T}_N$ for every $N\in \mathbb N$. Then,  $\{f^*_N\}_{N\in \mathbb N}$ contains a subsequence that converges strongly to a fixed point of $\mathbb{T}$.
\end{enumerate}
\end{Theorem}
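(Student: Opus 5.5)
Part (i) at a fixed level $N$ will follow from Brouwer's fixed-point theorem, and part (ii) together with the continuous existence claim from a compactness argument on $\mathscr{A}_I$.

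\emph{Finite-dimensional fixed points.} The set $\mathscr{A}^N_I=\mathscr{A}_I\cap\mathscr{S}_N$ is convex. It is closed and bounded in the $N$-dimensional space $\mathscr{S}_N$, since the constraints $f'\geq 0$ and $\|f\|_{W^{1,\infty}}\leq \epsilon_{\mathscr{A}}M_{\mathscr{A}}$ are closed; hence it is compact. It is also nonempty, because the constant zero lies in it (assuming constants belong to $\mathscr{S}_N$, or some element satisfies the bounds). By Brouwer's theorem it therefore suffices to show that $\mathbb{T}_N=\mathbb{P}_N\circ\mathbb{S}$ is continuous from $\mathscr{A}^N_I$ into itself. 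I will factor this into two continuity statements:
\begin{enumerate}
\item[(a)] $f\mapsto \mathbb S(f)=-g'+\Delta\sigma(\mathscr{E}f;x_0,\cdot)$ is continuous from $\mathscr{A}_I$, equipped with the $C^1(I)$ topology (equivalently $L^2$ on $\mathscr{S}_N$, by Definition~\ref{def:admissible}(ii)), into $L^2(J)$. This is the continuity of the forward observation map referred to in Lemma~\ref{lem:S}. It comes from the localized maximal-regularity argument near $x_0$, together with uniform existence on $[0,T^*]$ from Theorem~\ref{thm:exist} and $\mathscr{E}\in\mathscr{L}(C^1(I);C^1_b)$.
\item[(b)] $\mathbb{P}_N$ is single-valued and continuous on $L^2(J)$. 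Since $g$ is a $C^1$ diffeomorphism with $g'\geq g_0$, the map $f\mapsto f\circ g$ is an isomorphism from $L^2(I)$ onto its image, with norm equivalence constants depending on $g_0$ and $\|g'\|_\infty$. In the weighted inner product $\langle f_1,f_2\rangle=\int_I f_1f_2\,(g^{-1})'\,dx$, the operator $\mathbb{P}_N(y)$ is the metric projection of $y\circ g^{-1}$ onto the closed convex set $\mathscr{A}^N_I$. It is therefore unique and $1$-Lipschitz in that norm (Lemma~\ref{lem:P}).
\end{enumerate}
The same projection argument applies to $\mathbb P_\infty$ onto $\mathscr{A}_I$. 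That set is convex and closed in $L^2(I)$, because $W^{1,\infty}$-bounded monotone functions are closed under $L^2$ limits; by Arzel\`a--Ascoli it is also compact in $C(I)$, hence in $L^2(I)$.

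\emph{Convergence and the continuous fixed point.} Let $f^*_N$ be fixed points of $\mathbb{T}_N$. All lie in $\mathscr{A}_I$, which is uniformly Lipschitz-bounded, so Arzel\`a--Ascoli gives a subsequence $f^*_{N_k}\to f^*$ uniformly on $I$, with $f^*\in\mathscr{A}_I$. The derivatives converge weakly-$*$ in $L^\infty$, so monotonicity and the bound survive the limit.

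The main obstacle is continuity of $\mathbb S$ along this sequence. The limit holds only in $C(I)$ with $W^{1,\infty}$ bounds, not in $C^1$, so (a) must be upgraded. I will show that $\sigma(\mathscr{E}f)$ depends continuously on $f$ in the weaker topology: compare the Hanzawa-transformed quasilinear systems, for which the difference of the nonlinear terms $(\mathscr{E}f_k)(\sigma_k)-(\mathscr{E}f^*)(\sigma^*)$ is controlled by $\|\mathscr{E}(f_k-f^*)\|_{\infty}$ plus $\mathrm{Lip}\cdot|\sigma_k-\sigma^*|$. Using the uniform bounds of Theorem~\ref{thm:exist}(ii) and maximal $L^p$-regularity on a common interval, with a Gronwall-type step, the $L^\infty$ norm of $\mathscr{E}$ on $W^{1,\infty}$ (I will check that the extension of Remark~(b) satisfies this, or interpolate), and the local regularity near $x_0$, I expect $\mathbb S(f_k)\to\mathbb S(f^*)$ in $L^2(J)$.

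Granting this, set $y_k=\mathbb S(f^*_{N_k})\to y=\mathbb S(f^*)$. By density (Definition~\ref{def:admissible}(iii)) together with nestedness, and with convexity and monotonicity-preserving approximation in $\mathscr{A}_I$, the sets $\mathscr{A}^N_I$ converge to $\mathscr{A}_I$ in the Mosco sense. Hence $\mathbb{P}_{N_k}(y_k)\to\mathbb P_\infty(y)$ by the standard stability of metric projections under Mosco convergence and $1$-Lipschitz dependence on the datum. Consequently $f^*=\lim f^*_{N_k}=\lim\mathbb{P}_{N_k}(y_k)=\mathbb{T}(f^*)$, which proves (ii) and also the existence of a fixed point of $\mathbb T$. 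The delicate auxiliary point is that $\mathscr{A}_I\cap\mathscr{S}_N$ approximates $\mathscr{A}_I$ despite the shape constraint. I would handle it by approximating $f\in\mathscr{A}_I$ with slightly scaled elements $(1-\eta)f$ plus smooth approximants staying monotone, invoking the inverse estimate to keep the $W^{1,\infty}$ bound. If that fails, I would simply add it as a standing property of $\{\mathscr{S}_N\}$.
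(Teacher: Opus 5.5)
Your proposal follows the paper's route. Part (i) is Brouwer's theorem on the compact convex set $\mathscr{A}^N_I$. Part (ii) uses Arzel\`a--Ascoli, continuity of $\mathbb S$ under uniform convergence, and a passage to the limit in the projection.

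The paper does the last step by rewriting $f^*_N=\mathbb T_N(f^*_N)$ as the variational inequality $(\mathbb S(f^*_N)-f^*_N\circ g,(v-f^*_N)\circ g)_{L^2(J)}\le 0$ for $v\in\mathscr{A}^k_I$, $k\le N$. It then lets $N\to\infty$ and extends to all $v\in\mathscr{A}_I$ by density. This is exactly the proof of the Mosco stability you invoke, so the two arguments coincide in substance.

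The one structural difference is existence for $\mathbb T$. The paper gets it by ``the same argument'' as for $\mathbb T_N$; on the infinite-dimensional set $\mathscr{A}_I$ this is Schauder's theorem, with (S2) and Lemma~\ref{lem:P}. You obtain it as a by-product of (ii). Your route avoids an infinite-dimensional fixed-point theorem, but it makes existence for $\mathbb T$ depend on the approximation property discussed below, which the Schauder route does not need. Also, the ``main obstacle'' you identify is precisely Lemma~\ref{lem:S}~(S2), which is stated for $f_k\to\widehat f$ in $L^\infty(I)$, so you can cite it rather than re-derive it.

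Your two caveats are well founded, and they mark places the paper passes over.

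\emph{The extension operator.} The proof of (S2) bounds $F_1$ by $\|f_1-f_2\|_{L^\infty(I)}$. This tacitly requires $\|\mathscr{E}h\|_{L^\infty(\R^+)}\lesssim\|h\|_{L^\infty(I)}$. The explicit extension in the remark after (E1)--(E3) violates this, because it involves $h'(a_I)$ and $h'(b_I)$. Interpolation will not repair it: uniform convergence with Lipschitz bounds gives no control of endpoint derivatives. An $L^\infty$-stable alternative is $(\mathscr{E}f)(b_I+s)=2f(b_I)-f(b_I-\theta(s))$, mirrored at $a_I$, with $\theta$ smooth, nondecreasing, $[0,|I|]$-valued, $\theta(0)=0$, $\theta'(0)=1$ and $\theta'\le 1$. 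This choice satisfies (E1)--(E3) whenever $\epsilon_{\mathscr{A}}\le 1/3$.

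\emph{The constrained density.} $L^2$-density of $\bigcup_N\mathscr{S}_N$ does not imply density of $\bigcup_N\mathscr{A}^N_I$ in $\mathscr{A}_I$, even though the paper derives it from Definition~\ref{def:admissible}(iii). For trigonometric polynomials of period $|I|$, every monotone element is constant, so all discrete fixed points are constants. Your inverse-estimate repair does not work, for two reasons. That estimate only bounds derivatives of elements of $\mathscr{S}_N$ by their own $L^2$ norms and gives no $C^1$ approximation of $f$. And $(1-\eta)f$ creates no margin in the constraint $f'\ge0$. Making the property a standing hypothesis is the right call. For the polynomial spaces of Section~\ref{sec:num} it does hold, since Bernstein polynomials preserve monotonicity and do not increase $\|f\|_\infty$ or $\|f'\|_\infty$.

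\emph{Minor points.} Both you and the paper tacitly replace $\mathscr{A}_I$ by its closure, the monotone Lipschitz functions, since uniform limits of $C^1$ elements need not be $C^1$. Nonemptiness of $\mathscr{A}^N_I$ needs no assumption, because $0\in\mathscr{S}_N$.
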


\subsection{Homotopy continuation method}

Theorem~\ref{thm:main} guarantees the existence of these fixed points, which raises the natural question of how to compute them. As discussed in the introduction, contraction- or projection-based iterations are not applicable to the present free-boundary setting, since the required contractivity or projection conditions cannot be verified over the admissible class; moreover, the computation of an adjoint state is particularly involved for the curvature-driven motion of the interface. We therefore propose a homotopy-continuation method combining two regimes---a Picard iteration where the map is contractive, and a cubical Sperner search where it is not necessarily  contractive---which requires only point evaluations of the iteration operator and no adjoint-state computation.

We seek a fixed point of $\mathbb T_N$ by \emph{homotopy continuation}
\cite{Allgower1980,Allgower1990}. Let $f_0\in\mathscr A_I^N$ be a
chosen anchor and define, for $\alpha\in[0,1]$,
\begin{equation}
  \mathbb T_{N,\alpha}(f) = (1-\alpha)\,\mathbb T_N(f) + \alpha\,f_0,
  \label{eq:homotopy}
\end{equation}
so that $\mathbb T_{N,1}\equiv f_0$ has the known fixed point $f_0$,
while $\mathbb T_{N,0}=\mathbb T_N$ is the target. By the constructive
proof of the Brouwer fixed-point theorem \cite{KelloggLiYorke1976}, for
a generic anchor the zero set of \eqref{eq:homotopy} contains a
continuum joining $f_0$ at $\alpha=1$ to a fixed point of $\mathbb T_N$
at $\alpha=0$.

We track this continuum level by level along a sequence of levels
$\{\alpha_k\}_{k=0}^{K}\subset[0,1]$ with
$1=\alpha_0>\alpha_1>\cdots>\alpha_K=0$, computing at each level $k$
an approximate fixed point of $\mathbb T_{N,\alpha_k}$ warm-started
from the previous solution $f_{N,\alpha_{k-1}}$. The following lemma,
whose proof is deferred to Section~\ref{subsect:homo}, quantifies how
the fixed points at adjacent levels are related.
\begin{Lemma}\label{lem:approxcont}
For every $\alpha\in[0,1]$, the set of the fixed points of $\mathbb T_{N,\alpha}$ is non-empty; moreover, if $f_{N,\alpha}$ is any fixed point of $\mathbb T_{N,\alpha}$, then for every $\alpha'\in[0,1]$,
\begin{equation}
  \bigl\| f_{N,\alpha} - \mathbb T_{N,\alpha'}(f_{N,\alpha}) \bigr\|_{L^2(I)}
  \;\le\; M_{\mathscr{A},L^2}\,|\alpha-\alpha'|,
  \label{eq:approxcont}
\end{equation}
where $M_{\mathscr{A},L^2}:=\mathrm{diam}_{L^2(I)}(\mathscr{A}_I)$
denotes the diameter of the admissible set $\mathscr{A}_I$ in the
$L^2(I)$-norm.
\end{Lemma}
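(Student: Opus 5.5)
Existence will follow from Brouwer's fixed-point theorem, exactly as in Theorem~\ref{thm:main}(i); the estimate is a two-line algebraic identity.

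\emph{Existence.} The set $\mathscr{A}^N_I=\mathscr{A}_I\cap\mathscr{S}_N$ is convex. It is nonempty, since it contains $f_0$. It is closed and bounded in the $N$-dimensional space $\mathscr{S}_N$, hence compact. By the inverse property in Definition~\ref{def:admissible}(ii), all norms on $\mathscr{S}_N$ are equivalent, so the $W^{1,\infty}$ constraint defines a closed set. For every $\alpha\in[0,1]$ the map $\mathbb T_{N,\alpha}$ sends $\mathscr{A}^N_I$ into itself, because $\mathbb T_N(f)\in\mathscr{A}^N_I$, $f_0\in\mathscr{A}^N_I$, and $\mathbb T_{N,\alpha}(f)$ is a convex combination of the two. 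It is continuous, since $\mathbb T_N$ is continuous on $\mathscr{A}^N_I$; this is the same continuity (of $\mathbb S$ via Lemma~\ref{lem:S} and of the projection $\mathbb P_N$ via Lemma~\ref{lem:P}) that yields Theorem~\ref{thm:main}(i). Brouwer's theorem on the compact convex set $\mathscr{A}^N_I$ then gives a fixed point $f_{N,\alpha}$. This continuity is the only nontrivial input, and I take it from the proof of Theorem~\ref{thm:main}.

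\emph{Estimate.} Let $f=f_{N,\alpha}$, so that $f=(1-\alpha)\mathbb T_N(f)+\alpha f_0$. Then
\begin{equation*}
\mathbb T_{N,\alpha'}(f)-f=\mathbb T_{N,\alpha'}(f)-\mathbb T_{N,\alpha}(f)=(\alpha-\alpha')\bigl(\mathbb T_N(f)-f_0\bigr).
\end{equation*}
Both $\mathbb T_N(f)$ and $f_0$ lie in $\mathscr{A}^N_I\subset\mathscr{A}_I$, so $\|\mathbb T_N(f)-f_0\|_{L^2(I)}\le \mathrm{diam}_{L^2(I)}(\mathscr{A}_I)=M_{\mathscr{A},L^2}$. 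This proves \eqref{eq:approxcont}.

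It remains to check that $M_{\mathscr{A},L^2}$ is finite. For $f\in\mathscr{A}_I$ we have $\|f\|_{L^\infty(I)}\le \epsilon_{\mathscr{A}}M_{\mathscr{A}}$, hence $M_{\mathscr{A},L^2}\le 2\epsilon_{\mathscr{A}}M_{\mathscr{A}}|I|^{1/2}<\infty$.
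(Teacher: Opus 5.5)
Your proof is correct and follows the same route as the paper. Existence comes from Brouwer's theorem for the continuous self-map $\mathbb T_{N,\alpha}$ on the compact convex set $\mathscr{A}^N_I$, and the estimate from the identity $\mathbb T_{N,\alpha'}(f_{N,\alpha})-f_{N,\alpha}=(\alpha-\alpha')\bigl(\mathbb T_N(f_{N,\alpha})-f_0\bigr)$ combined with the diameter bound. Your version is in fact tidier than the written proof, which applies Brouwer to $\mathbb T_N$ instead of $\mathbb T_{N,\alpha}$ and displays the factor $(1-\alpha)$ where $|\alpha-\alpha'|$ is correct; the inclusions actually needed are $\mathbb T_N(f_{N,\alpha}),f_0\in\mathscr{A}^N_I$, as you use.
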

 For large $\alpha$, the homotopy map $\mathbb T_{N,\alpha}$ automatically
satisfies a contraction property, which makes it particularly well suited for
Picard-type iteration: the following lemma shows that it admits a unique
fixed point $f_{N,\alpha}$, and the Picard iteration converges to this fixed
point linearly. Its proof is stated in Section~\ref{subsect:homo}.
\begin{Lemma}\label{lem:28}
If $\alpha$ is sufficiently large, then $\mathbb T_{N,\alpha}$ admits
a unique fixed point and the Picard iteration
$$
f^k_{N,\alpha}:=\mathbb T_{N,\alpha}(f^{k-1}_{N,\alpha})\quad\forall k\in \mathbb N
$$
generates a sequence that converges linearly to the fixed point $f_{N,\alpha}$.
\end{Lemma}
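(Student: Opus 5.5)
The plan is to exhibit $\mathbb T_{N,\alpha}$ as a contraction on the closed convex set $\mathscr A_I^N$, equipped with the $L^2(I)$-norm, once $\alpha$ is close enough to $1$. The Banach fixed-point theorem then gives both a unique fixed point and linear convergence of the Picard iterates. The key ingredient is a Lipschitz estimate for $\mathbb T_N$ on $\mathscr A_I^N$:
\[
\|\mathbb T_N(f_1)-\mathbb T_N(f_2)\|_{L^2(I)}\le L_N\,\|f_1-f_2\|_{L^2(I)}\qquad \forall f_1,f_2\in\mathscr A_I^N ,
\]
with some $L_N<\infty$. Granting this, we get
\[
\|\mathbb T_{N,\alpha}(f_1)-\mathbb T_{N,\alpha}(f_2)\|_{L^2(I)}=(1-\alpha)\|\mathbb T_N(f_1)-\mathbb T_N(f_2)\|_{L^2(I)}\le (1-\alpha)L_N\|f_1-f_2\|_{L^2(I)},
\]
because the anchor term $\alpha f_0$ cancels. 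So for $\alpha>1-1/L_N$ the map is a strict contraction with constant $q_\alpha=(1-\alpha)L_N<1$. Since $\mathscr A_I^N$ is convex, $\mathbb T_{N,\alpha}$ maps it into itself as a convex combination of $\mathbb T_N(f)$ and $f_0$. The set is also a closed bounded subset of the $N$-dimensional space $\mathscr S_N$, hence complete. Banach's theorem therefore yields uniqueness, and the error bound $\|f^k_{N,\alpha}-f_{N,\alpha}\|\le q_\alpha^k\|f^0-f_{N,\alpha}\|$, which is the asserted linear convergence.

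To obtain $L_N$, I would split $\mathbb T_N=\mathbb P_N\circ\mathbb S$ and treat the two factors separately.

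\textbf{The projection $\mathbb P_N$.} The map $f\mapsto f\circ g$ is linear and, by the change of variables $s=g(t)$ with $g'\ge g_0>0$, it is bounded below: $\|f\circ g\|_{L^2(J)}\ge \|g'\|_\infty^{-1/2}\|f\|_{L^2(I)}$. Its range on $\mathscr A_I^N$ is a closed convex subset of $L^2(J)$. Hence $\mathbb P_N(y)$ equals the metric projection of $y$ onto this convex set, pulled back through a bounded-below linear map. Metric projections are $1$-Lipschitz, so $\mathbb P_N$ is Lipschitz from $L^2(J)$ to $L^2(I)$ with constant $\|g'\|_\infty^{1/2}$.

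\textbf{The solving operator $\mathbb S$.} Here $\mathbb S(f_1)-\mathbb S(f_2)=\Delta\sigma(f_1;x_0,\cdot)-\Delta\sigma(f_2;x_0,\cdot)$, and I need this in $L^2(J)$ to be bounded by $C\|f_1-f_2\|$ in some norm. On $\mathscr S_N$ all norms are equivalent; by the inverse property of Definition~\ref{def:admissible}(ii) together with \ref{E3}, it suffices to bound it by $C\|\mathscr E f_1-\mathscr E f_2\|_{W^{2,\infty}}$.

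I expect this Lipschitz dependence of the localized second derivative on $f$ to be the main obstacle. Lemma~\ref{lem:S} (referred to above) presumably gives only continuity. To upgrade it, I would reuse the Hanzawa-reduced quasilinear evolution equation of Section~\ref{sect:reduction}. The difference of the two solutions satisfies a linearized problem whose source term is $(\mathscr E f_1-\mathscr E f_2)(\sigma)$ plus terms Lipschitz in the difference. Maximal $L^p$-regularity on the common interval $[0,T^*]$, with the uniform bounds of Theorem~\ref{thm:exist}(ii), then gives a Lipschitz estimate in $H^1_p\times L^p(H^2_q)$-type norms. The localized maximal-regularity argument near $x_0$ transfers this to $\Delta\sigma(\cdot;x_0,\cdot)$ in $L^2(J)$.

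If a full Lipschitz bound proves too hard, a fallback is available: continuity of $\mathbb T_N$ on the compact set $\mathscr A_I^N$ combined with finite-dimensional smoothness still gives local Lipschitz bounds, and compactness yields a uniform constant. Either way the constant $L_N$ is finite, and the threshold on $\alpha$ follows.
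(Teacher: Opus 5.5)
Your proposal is correct and follows the paper's proof. The ingredients are the same: a Lipschitz bound for $\mathbb S$ on $\mathscr A_I^N$ from maximal regularity and Gronwall applied to the difference equation, with Lemma~\ref{lem:compo}, \ref{E3} and the inverse property converting the bound to $\|f_1-f_2\|_{L^2(I)}$; nonexpansiveness of $\mathbb P_N$, where your constant $\|g'\|_\infty^{1/2}$ spells out the change of variables the paper leaves implicit; then the factor $1-\alpha$ and Banach's theorem. Going through the full reduced system first is also a sound way to control the difference on $\mathrm{supp}\,\nabla\chi$, which the paper's local estimate on $U$ uses without comment.

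Drop the fallback, however. Continuity on a compact set does not yield local Lipschitz bounds, and no smoothness of $f\mapsto\mathbb T_N(f)$ is available (the projection is only Lipschitz), so the Lipschitz estimate for $\mathbb S$ is genuinely needed.
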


For the remaining levels, where uniqueness or contraction may fail,
we compute fixed points by a robust combinatorial search based on the
Sperner lemma. A fixed point of a continuous self-map on a box is
computed by a pivoting search that requires only point evaluations of
the map. Let $Q\subset\mathscr A_I^N$ be a box and $\mathbb G:Q\to Q$
a continuous map. The box $Q$ is covered by a uniform grid of mesh
$h$, and each grid vertex $v\in Q$ (a vector of coefficients in the
basis of $\mathscr A_I^N$) is assigned the label
\begin{equation}\label{eq:label}
  \ell(v) =
  \begin{cases}
    \min\bigl\{ j\in\{0,1,\cdots,N-1\} : (\mathbb G(v))_j \ge v_j
      \ \text{and}\ v_j < b_j \bigr\}, & \text{if the set is nonempty},\\
    N, & \text{otherwise},
  \end{cases}
\end{equation}
where $(\cdot)_j$ denotes the $j$-th coefficient with respect to the
basis of $\mathscr A_I^N$ and $b_j$ the $j$-th upper bound of $Q$; the
condition $v_j<b_j$ excludes the upper face, so the labeling is
\emph{proper}. By the Sperner lemma
\cite[Thm.~14.4.1]{Yang1999}, every proper labeling admits at least
one \emph{completely labeled} cell, i.e., a cell $\tau$ whose vertex
labels satisfy $\{\ell(v):v\in\tau\}=\{0,1,\cdots,N\}$. The pivoting
search starts from a cell $\tau_0$ whose vertices carry all labels
except $N$ and, as long as the current cell is not completely
labeled, moves to the neighboring cell across the face opposite a
vertex with duplicated label $j$; it stops at a completely labeled
cell $\tau$. The piecewise-linear interpolation of $\mathbb G$ on
$\tau$ admits a fixed point $f\in\tau$, and $f$ satisfies
$\|\mathbb G(f)-f\|\le\mathrm{diam}(\mathbb G(\tau))$
\cite[Ex.~3.14]{Allgower1980}; computing $f$ is purely algebraic and
requires no further evaluations of the map \cite{Eaves1972}. The
residual $\varepsilon=\|\mathbb G(f)-f\|$ is then measured directly;
if it exceeds the tolerance, the grid is refined ($h\leftarrow h/2$)
and the search is restarted from the same cell $\tau$, so that, as
the grid is refined, the extracted points converge to a fixed point of
$\mathbb G$. This procedure is summarized in Algorithm~\ref{alg:cubical}.

\begin{algorithm}[ht]
\caption{Cubical Sperner fixed-point search (pivoting)}
\label{alg:cubical}
\begin{algorithmic}[1]
\Require continuous map $\mathbb G:Q\to Q$, box $Q\subset\mathscr A_I^N$,
  tolerance $\mathrm{tol}$
\State cover $Q$ by a grid of mesh $h$; choose a starting cell
  $\tau_0$ whose vertices carry all labels except $N$
\Repeat
  \State $\tau\leftarrow\tau_0$
  \While{$\{\ell(v):v\in\tau\}\neq\{0,1,\cdots,N\}$}
    \State let $j$ be the duplicated label of $\tau$
    \State $\tau\leftarrow$ neighbor of $\tau$ across the face
      opposite the vertex with label $j$
  \EndWhile
  \State extract $f$ by piecewise-linear interpolation on $\tau$
  \State measure $\varepsilon=\|\mathbb G(f)-f\|$
  \If{$\varepsilon>\mathrm{tol}$}
    \State refine $h\leftarrow h/2$; $\tau_0\leftarrow\tau$
  \EndIf
\Until{$\varepsilon\le\mathrm{tol}$}
\State \Return $f,\varepsilon$
\end{algorithmic}
\end{algorithm}

For $\alpha_k<\underline{\alpha}$, contraction may be lost, and each
remaining level is treated as follows: we first continue the Picard
iteration, warm-started from the previous solution, until its stopping
criterion is met. If, during this iteration, complex behavior such as
guaranteed convergence or divergence is observed---with the residual
serving as the measure of progress---the cubical Sperner search
(Algorithm~\ref{alg:cubical}) is launched from the data point prior to
the Picard iteration, rather than from its terminal point; otherwise it
is launched from the point produced by the Picard step. The search is
implemented on a box centered at that point;  The complete
procedure is summarized in Algorithm~\ref{alg:hybrid}.

\begin{algorithm}[H]
\caption{Homotopy warm-started fixed-point search}
\label{alg:hybrid}
\begin{algorithmic}[1]
\Require anchor $f_0\in\mathscr A_I^N$, tolerance $\mathrm{tol}>0$,
  cutoff $\underline{\alpha}>0$, sequence of levels
  $\{\alpha_k\}_{k=0}^{K}\subset[0,1]$ with
  $1=\alpha_0>\alpha_1>\cdots>\alpha_K=0$
\Ensure $f_{N,\alpha_K}\in\mathscr A_I^N$ and the measured residual
  $\|\mathbb T_N(f_{N,\alpha_K})-f_{N,\alpha_K}\|$
\State $f_{N,\alpha_0}\leftarrow f_0$
\For{$k=1,2,\cdots,K$}
  \State $f\leftarrow f_{N,\alpha_{k-1}}$
  \State run the Picard iteration for $\mathbb T_{N,\alpha_k}$,
    warm-started from $f$ and monitoring the residual, until its
    stopping criterion is met
  \If{$\alpha_k<\underline{\alpha}$}
    \State $f_{\mathrm{init}}\leftarrow f$ if the residual history is
      regular, and employ the pre-Picard point $f_{N,\alpha_{k-1}}$, otherwise
    \State run Algorithm~\ref{alg:cubical} on a box centered at $f_{\mathrm{init}}$
    \If{the search returns an acceptable approximation}
      \State $f_{N,\alpha_k}\leftarrow$ the point returned by the search
    \Else
      \State $f_{N,\alpha_k}\leftarrow f_{\mathrm{init}}$
    \EndIf
  \Else
    \State $f_{N,\alpha_k}\leftarrow f$
  \EndIf
\EndFor
\State \Return $f_{N,\alpha_K}$, $\|\mathbb T_N(f_{N,\alpha_K})-f_{N,\alpha_K}\|$
\end{algorithmic}
\end{algorithm}

\section{Reduction to a fixed  domain}\label{sect:reduction}

   In this section, we transform the DCIS model \eqref{eq:DCIS:full} into a quasilinear parabolic system in the fixed reference domain $\Om_0$.
   For more details on the reduction, we  refer the reader to \cite{cui2007well,cui2009lie} and references therein.

Let $\hbn$ be the outward unit normal field on $\Gamma_e$ and  $\delta\in (0,\delta_\Omega]$ with $\delta_\Omega$ from \eqref{ball condition}. 
Then, the mapping 
\[
\Phi: \Gamma_e\times (-4\delta,4\delta)\to \mathcal{R}:=\textup{Im}(\Phi)\subset \R^n 
\quad \text{with}\quad\Phi(\xi,s)=\xi+s \hbn(\xi)  
\]
is a $C^\infty$-diffeomorphism from  $\Gamma_e\times (-4\delta,4\delta)$ onto $\mathcal{R}$ (cf. \cite[p. 65]{pruss2016moving}). Let us decompose its inverse $\Phi^{-1}\!: \mathcal{R}\to \Gamma_e\times (-4\delta,4\delta)$ by introducing functions $\Xi\!: \!\mathcal{R}\to \Gamma_e$ and  $\Lambda\!\!: \!\mathcal{R}\to(-4\delta,4\delta)$ such that 
$$
(\Xi(x),\Lambda(x))=\Phi^{-1}(x)  \quad \forall x\in \mathcal{R}.  
$$
  We  denote by $\Pi:B^{1-1/q}_{qq}(\Gamma_e)\to H^{1}_q(\Omega_0)$ the linear operator such that $\tilde\rho :=\Pi(\rho)$ is the unique solution of the following boundary value problem: 
\beq\label{extension}
 \Delta \tilde \rho =0  \quad \textup{in}\quad \Omega_0, \quad \tilde \rho=0 \quad \textup{on} \quad \Sigma, \quad  \tilde \rho=\rho \quad \textup{on} \quad \Gamma_e,
\eeq
where $\Omega_0$ is the initial tumor domain.  Here and in what follows, we assume that $q>n$.
By the standard elliptic regularity theory, we have the following boundedness 
\begin{equation}\label{bounded:Pi}
\Pi\in \L(B^{k-1/q}_{qq}(\Gamma_e), H_q^{k}(\Omega_0)) \quad \forall 1\le k\le 3.
\end{equation}
There exists some constant $C_e>0$ such that  \(\|\Pi(\rho)\|_{C^1(\overline{\Om}_0)}\leq C_e\|\rho\|_{B^{1-1/q}_{qq}(\Gamma_e)}\) for all $ \rho\in B^{1-1/q}_{qq}(\Gamma_e)$.
 Assuming further $\delta<\delta_0:=\min\{\delta_\Omega/5,1/C_e\}$,  we define 
\[
\mathcal{O}^0_\delta:=\left\{\rho\in C^1(\Gamma_e)\, \mid \, \|\rho\|_{C^1(\Gamma_e)}<\delta\right\}
\quad \textup{and}\quad \mathcal{O}_\delta:= \mathcal{O}^0_\delta\cap B_{qq}^{3-1/q}(\Gamma_e).
\]
Given $\rho\in \mathcal{O}_\delta$, we denote by $\Gamma_\rho$ the corresponding normal parameterization over $\Gamma_e$ as in \eqref{surface-class}, and by $\Om_\rho$  the domain enclosed by $\Gamma_\rho$ and $\Sigma$. 
The Hanzawa transformation
$\Theta_\rho: \overline{\Omega}_0\to \overline{\Omega}_\rho$
is given by
\begin{equation*}\label{def:Thetarho}
\Theta_\rho(x)=x+\phi(\Lambda(x))\,\Pi(\rho)(x)\,\mathbf{n}(\Xi(x))
\;\;\text{for}\;\; x\in\overline{\Omega}_0\cap\mathcal{R},
\text{ and}\;\;\Theta_\rho(x)=x\;\;\text{otherwise},
\end{equation*}
where $\phi\in C^\infty_0((-3\delta,3\delta),[0,1])$ is  a cut-off function such that $\phi\equiv 1$ on $[-\delta,\delta]$  and $\|\phi'\|_{\infty}<\frac{2}{3\delta}$. 
 As usual, we denote by $\Theta^\rho_*: C(\overline{\Om}_0)\to C(\overline{\Om}_\rho)$ and $\Theta_\rho^*: C(\overline{\Om}_\rho)\to C(\overline{\Om}_0)$ 
 the push-forward operator and the pull-back operator respectively, i.e., 
 \begin{align*}
\Theta^\rho_*(u):=u\circ \Theta^{-1}_\rho  \quad \forall u\in C(\overline{\Om}_0),\quad   \Theta_\rho^*(u):= u\circ \Theta_\rho\quad \forall\, u\in C(\overline{\Om}_\rho).    
\end{align*}
In particular, it follows that 
\begin{equation}\label{conti:Theta:rho}
 [\rho \mapsto \Theta_\rho]\in C^\infty(\mathcal{O}_\delta, H_q^{k}(\Omega_0,\mathbb R^n))   \quad\forall\, 1\leq k\leq 3.
\end{equation}
 Given $\rho\in \mathcal{O}_\delta$, we  define 
$\psi_\rho:\mathcal{R} \to \R$ by 
$
\psi_\rho(x):= \Lambda(x) -\rho(\Xi(x)) 
$ for $x\in \mathcal{R}$.  Clearly $\psi_\rho\in B_{qq}^{3-1/q}(\mathcal{R})$. Then, we define the transformed Laplacian and transformed normal derivative as follows: 
\beq\label{def:Arho}
\mathcal{A}(\rho) u :=  -\Theta_\rho^*\Delta(\Theta_*^\rho u)\;\, 
\forall u\in H^2_q(\Omega_0), \;\;
 \D(\rho) u := (\Theta_\rho\mid_{\Gamma_\rho})^*(\nabla(\Theta_*^\rho u) \cdot \nabla \psi_\rho )
 \;\, \forall u\in C^1(\overline{\Omega}_0),
 \eeq
 where $\Theta_\rho\mid_{\Gamma_\rho}$ denotes the restriction of $\Theta_\rho$ on $\Gamma_\rho$. Our estimates below rely on the estimates of pointwise multiplication and composition operator acting on  Sobolev spaces of fractional order, which can be found in \cite[{Section 4.3.1, Theorem 1}]{runst2011sobolev} and 
  \cite[{Section 5.3.3, Theorem 1}]{runst2011sobolev}.
 
 \begin{Lemma}\label{Lemma:pointwise:super}
~\begin{enumerate}
     \item[\textup{(i)}]  
 For every $s_2\geq s_1\geq n/q$, the following mappings
 \begin{align*}
 &[(a,v)\mapsto av]: H^{s_2}_q(\Omega)\times     H^{s_1}_q(\Omega)\to  H^{s_1}_q(\Omega), \\
&[(a,v)\mapsto av]: B^{s_2-1/q}_{qq}(\Gamma_e)\times     B^{s_1-1/q}_{qq}(\Gamma_e)\to  B^{s_1-1/q}_{qq}(\Gamma_e)
 \end{align*}
 are bilinear and continuous. 
 
  \item[\textup{(ii)}] Let $G_\Omega\in C^\infty(\overline{\Omega}_0\times \R^l;\R)$ and $G_\Gamma\in C^\infty(\Gamma_e\times \R^l;\R)$
  for $l\in \mathbb N$, and let $s>n/q$. Then, the corresponding composition operators 
  \begin{align*}
 &  [(\rho_1,\cdots,\rho_l)\mapsto  G_\Omega(\cdot,\rho_1,\cdots,\rho_l)]: \prod_{i=1}^l H_{q}^{s}(\Omega_0)
 \to H_{q}^{s}(\Omega_0),  \\
 & [(\rho_1,\cdots,\rho_l)\mapsto  G_{\Gamma}(\cdot,\rho_1,\cdots,\rho_l)]: \prod_{i=1}^l B_{qq}^{s-1/q}(\Gamma_e)
 \to B_{qq}^{s-1/q}(\Gamma_e) 
  \end{align*}
  are  infinitely Fr\'{e}chet differentiable.

  \end{enumerate}
  
 \end{Lemma}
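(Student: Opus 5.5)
This lemma is a direct citation of two results from Runst--Sickel: the pointwise multiplier theorem in \cite[Section 4.3.1, Theorem 1]{runst2011sobolev} and the composition (Nemytskij) operator theorem in \cite[Section 5.3.3, Theorem 1]{runst2011sobolev}. These are stated on $\R^n$, so the plan is to reduce the domain $\Omega$ and the manifold $\Gamma_e$ to the whole-space setting by extension and localization, apply the whole-space results, and then restrict back.

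\textbf{Part (i), bulk case.} For $\Omega$ smooth and bounded there is a universal extension operator (Stein/Rychkov type) $E$ bounded from $H^{s}_q(\Omega)$ to $H^{s}_q(\R^n)$ for all $s$ in the relevant range. Given $a\in H^{s_2}_q$ and $v\in H^{s_1}_q$, we have $av=(Ea\,Ev)|_\Omega$. It therefore suffices to prove boundedness of the product
\[
H^{s_2}_q(\R^n)\times H^{s_1}_q(\R^n)\to H^{s_1}_q(\R^n).
\]
When $s_1>n/q$, the space $H^{s_1}_q(\R^n)$ is a multiplication algebra. Since $s_2\ge s_1$ gives $H^{s_2}_q\embed H^{s_1}_q$, the bound $\|av\|_{s_1}\lesssim\|a\|_{s_2}\|v\|_{s_1}$ follows. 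The borderline case $s_1=n/q$ is harder. Here one uses that $s_2\ge s_1$ with $H^{s_2}_q\embed L^\infty$ (which holds when $s_2>n/q$). The Runst--Sickel multiplier theorem then gives $\|av\|_{H^{s_1}_q}\lesssim \|a\|_{H^{s_2}_q}\|v\|_{H^{s_1}_q}$ whenever $s_2>s_1=n/q$. If instead $s_2=s_1=n/q$, the algebra property fails in general. I would read the statement as requiring at least $s_2>n/q$ in that case, or restrict attention to the cases $s>n/q$ actually used later.

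\textbf{Part (i), boundary case.} Cover $\Gamma_e$ by finitely many charts and take a smooth partition of unity $\{\chi_j\}$ together with cutoffs $\tilde\chi_j\equiv1$ on $\operatorname{supp}\chi_j$. The identity
\[
av=\sum_j \chi_j a\cdot\tilde\chi_j v
\]
transfers the estimate to $B^{s-1/q}_{qq}(\R^{n-1})$. There the corresponding Besov multiplier theorem applies with $s_1-1/q\ge n/q-1/q=(n-1)/q$, which is exactly the dimension threshold on $\R^{n-1}$. Bilinearity is obvious, and continuity follows from boundedness.

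\textbf{Part (ii).} Fix $s>n/q$. Then $H^s_q(\Omega_0)\embed C(\overline\Omega_0)$, so each $\rho_i$ takes values in a compact set $K$. We may therefore replace $G_\Omega$ by a function that is smooth, has bounded derivatives of all orders, and agrees with $G_\Omega$ on a neighbourhood of $\overline\Omega_0\times K$; this is possible locally uniformly in $\rho$. Extending the $\rho_i$ to $\R^n$ and multiplying by a cutoff in $x$ reduces the claim to the Runst--Sickel composition theorem. That theorem says that for $s>n/q$, $u\mapsto G(\cdot,u)$ maps $H^s_q$ into itself, is bounded on bounded sets, and is continuous.

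Fréchet differentiability is then obtained by induction using Taylor's formula:
\[
G(\cdot,\rho+h)-G(\cdot,\rho)-\partial_\rho G(\cdot,\rho)h=\int_0^1(1-\tau)\,\partial_\rho^2G(\cdot,\rho+\tau h)[h,h]\,d\tau .
\]
The remainder is estimated with the algebra property from part (i) together with the boundedness of the composition $\partial_\rho^2G(\cdot,\cdot)$ on bounded sets, which gives $O(\|h\|^2)$. Applying the same argument to $\partial^k_\rho G$ yields $C^\infty$. The boundary statement follows identically via charts, working in $B^{s-1/q}_{qq}(\R^{n-1})$ with $s-1/q>(n-1)/q$.

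\textbf{Main obstacle.} I expect the hardest step to be the careful localization of the Besov spaces on $\Gamma_e$ and the borderline case $s_1=n/q$ in part (i). The remaining steps are routine applications of the cited theorems.
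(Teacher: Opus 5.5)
Your proposal matches the paper, which gives no argument beyond citing the Runst--Sickel pointwise-multiplier theorem (Section 4.3.1) and Nemytskij-operator theorem (Section 5.3.3). Your extension/localization step and the Taylor-remainder induction for Fr\'echet differentiability are the standard details behind that citation.

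Your caveat about the borderline case is also correct. For $q>1$, neither $H^{n/q}_q(\Omega)$ nor $B^{(n-1)/q}_{qq}(\Gamma_e)$ embeds in $L^\infty$, so neither is an algebra, and the case $s_1=s_2=n/q$ must be excluded (one needs $s_2>n/q$). This is harmless for the paper, since with $q>n$ it only applies the lemma with $s\ge 1>n/q$.
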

 In view of \cite[Lemma 2.2]{Escher1997classical}, we can infer that there exist
 smooth functions  $a_{jk}\in C^\infty(\overline{\Omega}\times \R^n,\R)$, $a_j\in C^\infty(\overline{\Omega}\times \R^n\times \R^{n\times n},\R)$ and $b_j\in C^\infty(\overline{\Omega}\times \R^n;\R)$ such that 
\begin{align}
&\mathcal{A}(\rho)u:=-a^{jk}(\nabla (\Pi{\rho}))\p_{jk} u - a^j(\nabla (\Pi{\rho}),\nabla^2 (\Pi{\rho})) \label{local:expression:A}
\p_j u, \\
&\mathcal{D}(\rho) u:= b^j(\nabla (\Pi{\rho})) \partial_j u,  \label{local:expression:D}
\end{align}
where we have used the Einstein summation convention.
Then, making use of Lemma \ref{Lemma:pointwise:super},  
 \eqref{conti:Theta:rho}, \eqref{local:expression:A} and \eqref{local:expression:D},  we can follow the argument as in \cite[Lemma 2.2]{Escher1997classical}, and infer for all $1\le k\leq 2$ that 
\begin{align}
   \mathcal{A}\in C^\infty(\O_\delta,\L(H_q^2(\Om_0),L^q(\Om_0)))\quad \textup{and}\quad \;\mathcal{D}\in C^\infty(\O_\delta,\L(H^{k}_q(\Om_0),B^{k-1-1/q}_{qq}(\Gamma_e))) \label{prop:AD}.
\end{align}
The transformed mean curvature $\K:C^2(\Gamma_e)\cap \O_\delta(\Gamma_e)\to C(\Gamma_e)$ is introduced as follows:  
\[
\K(\rho)(x)=\textup{the mean curvature of the hypersurface} \,\,\Gamma_\rho\,\textup{at the point} \,\, \Theta_\rho^{-1}(x),
\]
and it has the following decomposition: 
  \beq\label{K:splitting}
  \K(\rho)=\mathcal{L}(\rho)\rho+ \K_1(\rho), 
  \eeq
  where $\mathcal{L}(\rho)$ is a second-order elliptic linear partial differential operator on $\Gamma_e$ with coefficients being functions of $\rho$ and its first-order derivatives, and $\K_1(\rho)$ is a first-order partial nonlinear differential operator on $\Gamma_e$. There exist smooth functions $p_{jk}\in C^\infty(\Gamma_e,\R\times\mathbb R^{n-1}), p\in C^\infty(\Gamma_e,\R\times \mathbb R^{n-1})$ such that,  with respect to any local chart, 
  \begin{equation}\label{expression:L:K}
    \mathcal{L}(\rho)\rho =P^{jk}(\rho,\p \rho)\p_{jk} \rho  \quad\textup{and}
    \quad \mathcal{K}_1(\rho)= P^0(\rho, \partial \rho).
  \end{equation}
Following the lines of the proof of \cite[Lemma 3.1]{Escher1997classical}---which was formulated in little H\"older spaces---and replacing the latter by Besov spaces, we infer from \eqref{expression:L:K} and Lemma \ref{Lemma:pointwise:super} that
\begin{equation}\label{prop:L}
\mathcal{L}\in C^\infty(\mathcal{O}_\delta, \mathscr{L}(B^{4-1/q}_{qq}(\Gamma_e),B^{2-1/q}_{qq}(\Gamma_e)))\quad \text{and}\quad \K_1\in  C^\infty(\mathcal{O}_\delta,B^{2-1/q}_{qq}(\Gamma_e)).
\end{equation}

\noindent Finally,  we introduce the  mapping 
$\M:\O_\delta\times H^{1}_q(\Omega_0)\to L^q(\Omega_0) $ by 
\begin{equation}\label{def:M:op}
\M(\rho,u)(x):=
  -\phi(\Lambda(x))\la (\Theta^*_\rho \nabla \Theta_*^\rho u)(x), \hbn (\Xi(x))\ra_{\R^n} \quad \forall\,x\in \mathbb R^n,
\end{equation}
where $\la \cdot, \cdot\ra_{\R^n}$ denotes the inner product on $\R^n$. By \eqref{bounded:Pi} and \eqref{prop:AD}, we can observe that the above operators are well-defined and smooth (see e.g. \cite[Lemma 3.4]{cui2009lie}) in the sense that  
\begin{align}
\mathcal{M}\in  C^\infty(\O_\delta\times H^k_q(\Omega_0), H^{k-1}_q(\Omega_0)) \quad \forall 1\le k\le 2. \label{op:E:estimate}
\end{align}
With the notations in \eqref{def:Arho}, \eqref{K:splitting} and \eqref{def:M:op}, the DCIS model \eqref{eq:DCIS:full}
 can be transformed as 
\beq\label{forward:transform}
\left\{
 \begin{array}{r@{}ll}
 \p_t u+\A(\rho)u+\M(\rho,u)\Pi(\mathcal{D}(\rho)w) &=-f(u),  \quad
  \A(\rho)w=\mu (u-\sigma^\star)  &  \textup{in}\;\; \Omega_0\times [0,T], \\[1mm]
   \p_t \rho+\D(\rho)w&=0, \quad
   w=\gamma \K(\rho), \quad u=\sigma_b  & \textup{on} \;\; \Gamma_e\times (0,T],\\[1mm]
  \p_\n u=0, \quad  \partial_\n w&=0  &\textup{on} \;\; \Sigma\times (0,T], 
\end{array}\right. 
\eeq
with the initial conditions  
\beq\label{forward:transform:initial}
  u(0)=\sigma_0 \quad \textup{on}\quad \Omega_0 \quad \text{and}\quad
  \rho(0) =0 \quad \textup{on}\quad \Gamma_e. 
\eeq
That is, if $(u,w,\rho)$ is the solution of \eqref{forward:transform}-\eqref{forward:transform:initial}, and letting
$$
\sigma=\Theta^{\rho}_* u, \quad  \varpi=\Theta^{\rho}_* w, \quad \Gamma(t)=\Gamma_{\rho(t)}, \quad \Om(t)=\Om_{\rho(t)},
$$
we obtain a solution $(\sigma, \varpi ,\Gamma)$ of \eqref{eq:DCIS:full} and vice versa (cf. \cite{cui2007well,cui2009lie} for details). 
We will further reduce the problem \eqref{forward:transform}-\eqref{forward:transform:initial} by solving the system for $w$. To this end,  given $\rho\in\mathcal{O}_\delta$, we consider
\beq\label{def:ST}
\mathcal{S}(\rho): L^q(\Om_0) \to  H^2_q(\Om_0) \quad\textup{and}\quad  \T(\rho):  B^{2-1/q}_{qq}(\Gamma_e) \to  H^2_q(\Om_0), 
\eeq
where $w_1:=\mathcal{S}(\rho)g_1$ and  $w_2:=\T(\rho)g_2$ are respectively the solution of boundary value problems: 
$$
\left\{
 \begin{array}{r@{}ll}
\A(\rho)w_1= g_1 & \quad\textup{in}\quad \Om_0, \\
 w_1=0 & \quad \textup{on}\quad \Gamma_e,\\
 \partial_\n w_1=0  & \quad \textup{on}\quad \Sigma
\end{array}\right. \quad\textup{and}\quad  
\left\{
 \begin{array}{r@{}ll}
\A(\rho)w_2= 0 & \quad\textup{in}\quad \Om_0, \\
 w_2=g_2 & \quad \textup{on}\quad \Gamma_e,\\
 \partial_\n w_2=0  & \quad \textup{on}\quad \Sigma.
\end{array}\right. 
$$
Moreover, the operators $\Sr(\rho)$ and  $\T(\rho)$ given by \eqref{def:ST} are well-defined in the sense that 
\begin{align}\label{prop:ST} 
  \Sr\in C^\infty(\O_\delta,  \L(H^{k-2}_q(\Om_0),H^{k}_q(\Om_0)) ),\quad \T\in C^\infty(\O_\delta,  \L(B^{k-1/q}_{qq}(\Gamma_e), H^{k}_q(\Om_0)) )
  \end{align}
  for $k\in\{2,3\}$ (cf. \cite[Lemma 3.6]{cui2009lie}).   With the notations in \eqref{K:splitting} and \eqref{def:ST}, we have  
$$
w=\gamma \T(\rho)\mathcal{L}(\rho)\rho+\gamma \T(\rho)\K_1(\rho)+\mu\Sr(\rho)(u-\sigma^\star)
$$
and thus the system \eqref{forward:transform} can be reduced as follows: 
\beq\label{forward:transform2}
\left\{
 \begin{array}{r@{}ll}
 \p_t u+\A(\rho)u+\mathcal{C}_0(\rho,u)\rho&=\F_0(\rho,u)-f(u) &\quad \textup{in}\quad \Omega_0\times [0,T], \\[1mm]
   \p_t \rho+\B(\rho)\rho&=\G_0(\rho,u),\quad
  u=\sigma_b  &\quad \textup{on}\quad \Gamma_e\times (0,T], \\[1mm]
  \partial_\n u&=\sigma_\Sigma  &\quad \textup{on} \quad \Sigma\times (0,T],  
\end{array}\right. 
\eeq
where 
\begin{align*}
   \B(\rho)\zeta:&= \gamma \D(\rho) \T(\rho)\mathcal{L}(\rho)\zeta,\quad \mathcal{C}_0(\rho,u)\zeta:=\M(\rho,u)\cdot \Pi(\B(\rho)\zeta), \\[1mm]
   \mathcal{G}_0(\rho,u):&=-\D(\rho)[\gamma\T(\rho)\K_1(\rho)+\mu\Sr(\rho)(u-\sigma^\star)],\quad
   \mathcal{F}_0(\rho,u):= \M(\rho,u) \Pi \mathcal{G}_0(\rho,u). 
\end{align*}
To homogenize the boundary condition and for further simplification, we introduce 
$$
u_b \in C^\infty(\Omega\times[0,T]) \quad \textup{such that}\quad u_b\mid_{\Sigma} =\sigma_b \quad\textup{and}
\quad u_b\mid_{\Gamma_e}= \sigma_b, 
$$
and define 
\begin{align*}
 \mathcal{G}(\rho,u):=\mathcal{G}_0(\rho,u+\sigma_b), \quad\mathcal{C}(\rho,u):=\mathcal{C}_0(\rho, u+\sigma_b),\quad
\F(\rho,u):=\F_0(\rho,u+\sigma_b)
\end{align*}
and introduce the mappings 
\begin{align}
\mathbb A(U):=\begin{pmatrix}
 \A(\rho) & \mathcal{C}(\rho,u)\\
 0 & \B(\rho) 
\end{pmatrix}\quad\text{and}  
\quad \mathbb F(U):=\begin{pmatrix}
  \F(\rho,u)-f(u(t)+\sigma_b) \\
  \G(\rho,u)
\end{pmatrix} \quad\forall\, U:=\begin{pmatrix}
    u \\
    \rho 
\end{pmatrix}\in X_1,
\label{def:F} 
\end{align}
where 
$$
\mathbb{X}_1:=E_1^1\times E_1^2:=\{u\in H^2_q(\Omega_0)\mid  u =0 \quad \textup{on}\quad\,\Gamma_e\quad \partial_\n u=0\quad \textup{on}\quad\Sigma\}\times B^{4-1/q}_{qq}(\Gamma_e). 
$$
From \eqref{prop:AD}, \eqref{prop:L}, \eqref{op:E:estimate} and  \eqref{prop:ST},  along with  \eqref{bounded:Pi}, we conclude that
$$
\mathbb A\in C^\infty( \mathcal{O}_\delta,\L(\mathbb{X}_1,\mathbb X)) \quad\textup{and} \quad \mathbb F\in C^\infty(\O_\delta,\mathbb X),
$$
where 
$$
\mathcal{U}_\delta:=B^{3/2}_{qq}(\Omega) \times \O_\delta , \quad 
\mathbb{X}:=E^1\times E^2:=L^{q}(\Omega_0)\times B^{1-1/q}_{qq}(\Gamma_e).
$$
Replacing  $\mathcal{C}_0,\F_0,\G_0$ by $\mathcal{C},\F,\G$  in \eqref{forward:transform2}, and utilizing \eqref{def:F} and the initial conditions \eqref{forward:transform:initial},  we finally arrive at a quasilinear parabolic evolution equation in $X$ as follows: 
\begin{equation}\label{forward:evolutionX}
 \begin{cases}
     U'(t)+\mathbb{A}(U(t))U(t)={\mathbb F}(U(t)) \quad \forall\,t\in (0,T],\\[2mm]
     U(0)=U_0,
 \end{cases}   
\end{equation}
where $U_0:=(\sigma_0-\sigma_b,0)^\top\in X_1$. By a well-known interpolation, we have that 
\begin{equation*}
    \mathbb X_{s,p}:= (\mathbb X_0, \mathbb X_1)_{\gamma,p} \;\;\textup{is equivalent to}\;\;  B_{qp}^{2s }(\Omega_0)\times B_{qp}^{4s-1/q}(\Gamma_e).
\end{equation*}

\section{Proof of the main results}\label{sec:pf_main}
In this section, we first present the proof of Theorem \ref{thm:exist} in Subsection \ref{subsec:41}, which is concerned
with the existence of \ref{FP}. Subsection \ref{subsec:42} is devoted to the proof of Lemma \ref{lem:fixedpoit} and Theorem \ref{thm:main},
which encapsulate the existence and convergence of the fixed point for \ref{IP}.

Our proof relies on the framework of  maximal regularity (see e.g.\cite{amann1995linear}).
Let $1\leq p<\infty$ and $T>0$. Throughout this section, let us write $J=(0,T)$.
For two Banach spaces $\mathsf{X}_0,\mathsf{X}_1$ such that $\mathsf{X}_1\embed \mathsf{X}_0$, we define 
\begin{align*}
\mathsf{H}_p^1(J,(\mathsf{X}_1,\mathsf{X}_0)):=
H_p^1(J;\mathsf{X}_0)\cap L^p(J;\mathsf{X}_1),
\quad {}_0\mathsf{H}_p^1(J,(\mathsf{X}_1,\mathsf{X}_0)):=\{u\in \mathsf{H}_p^1(J,(\mathsf{X}_1,\mathsf{X}_0))\mid u(0)=0 \}.
\end{align*}

\begin{Definition}
Assume that $\mathsf{A}\in  \L(\mathsf{X}_1,\mathsf{X}_0)$ and $J=(0,T)$ for some $T>0$. $\mathsf{A}$ is said to have maximal $L^p$-regularity (on $J$ with respect to $(\mathsf{X}_1,\mathsf{X}_0))$)
if, given any $\mathsf{f}\in L^p(J;\mathsf{X}_0)$, the initial value problem 
$$
\p_t u+\mathsf{A}u=\mathsf{f} \quad \textup{in}\quad J\quad\text{with}\quad u(0)=0
$$
admits a unique solution in ${}_0\mathsf{H}_p^1(J,(\mathsf{X}_1,\mathsf{X}_0))$. Equivalently, it means that 
$$
\partial_t+ \mathsf{A}\quad\textup{is an isomorphism between}\quad {}_0\mathsf{H}_p^1(J,(\mathsf{X}_1,\mathsf{X}_0)) \quad \textup{and}\quad L^p(J;\mathsf{X}_0).
$$
We denote by \( \mathscr{MR}(\mathsf{X}_1,\mathsf{X}_0)\)   the collection of all $\mathsf{A}\in \L(\mathsf{X}_1,\mathsf{X}_0)$  with maximal $L^p$-regularity on $J$. This notation is well-defined because maximal regularity  is independent of both the bounded interval $J$ and the exponent $p\in(1,\infty)$; see \cite[Remark 6.1 (d) and (e)]{Amann2004}. Note that \( \mathscr{MR}(\mathsf{X}_1,\mathsf{X}_0)\) is an open set in the topology of  $\L(\mathsf{X}_1,\mathsf{X}_0)$.

\end{Definition}

\begin{Remark}\label{rem:maximalp}
Let $s\in (0,1+1/q)$.  Given a bounded and smooth domain $U$, consider the operator 
$$
-\Delta: \{v\in B^{s+2}_{qq}(U)\mid \partial_{\bf n} v =0 \,\textup{on}\,\partial U\}\subset B^{s}_{qq}(U)\to B^{s}_{qq}(U).
$$
 Then, the elliptic operator $-\Delta$ enjoys the maximal $L^p$-regularity, which is a classical result of elliptic operator with Neumann boundary condition \cite[Page~279]{pruss2016moving}.
 
\end{Remark}

\subsection{Proof of Theorem \ref{thm:exist}}\label{subsec:41}

To prove Theorem \ref{thm:exist}, we shall make use of the maximal regularity of the governing operator and nonlinearity established in the following lemma.

\begin{Lemma}\label{Lemma:op:MR}

There exists a constant $\delta_0>0$ such that 
\begin{align}\label{MR:AB}
&\mathcal{A}\in C^\infty(\mathcal{O}_{\delta_0}, \mathscr{MR}(E_1^1,E^1_0))\quad \textup{and}\quad\mathcal{B}(\rho)\in C^\infty(\mathcal{O}_{\delta_0}, \mathscr{MR}(E_1^2,E^2_0)). 
\end{align}
In addition, we have that 
\begin{align}
\mathcal{G}\in C^{1}(\mathcal{U}_{\delta_0}, B_{qq}^{1-1/q}(\Gamma_e)), \quad
\mathcal{F}\in C^{1}(\mathcal{U}_{\delta_0}, L^q(\Omega_0)), \quad
\mathbb{A}\in  C^\infty(\mathcal{U}_{\delta_0}, \mathscr{MR}(\mathbb X_1,\mathbb X)) \label{MR:A:mat}.
\end{align}

\end{Lemma}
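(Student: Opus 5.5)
The plan is a perturbation argument around $\rho=0$, using that $\mathscr{MR}(\mathsf X_1,\mathsf X_0)$ is open in $\L(\mathsf X_1,\mathsf X_0)$, combined with the smoothness results \eqref{prop:AD}, \eqref{prop:L}, \eqref{op:E:estimate} and \eqref{prop:ST} established in Section~\ref{sect:reduction}.

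\emph{Step 1: the operator $\mathcal A$.} At $\rho=0$ we have $\Theta_0=\mathrm{id}$, so $\mathcal A(0)=-\Delta$ on $E_1^1$, i.e.\ with the mixed boundary conditions (Dirichlet on $\Gamma_e$, Neumann on $\Sigma$). Since $\Gamma_e$ and $\Sigma$ are disjoint, this is a classical elliptic operator with regular boundary conditions. Its maximal $L^p$-regularity in $L^q(\Omega_0)$ follows from the standard theory, in the same way as Remark~\ref{rem:maximalp}.

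By \eqref{prop:AD}, the map $\rho\mapsto\mathcal A(\rho)$ is $C^\infty$ into $\L(E_1^1,E_0^1)$. Openness of $\mathscr{MR}$ then gives $\mathcal A(\rho)\in\mathscr{MR}$ for $\|\rho\|$ small. We have to make sure the smallness is measured in the topology of $\mathcal O_\delta$. This works because the coefficients in \eqref{local:expression:A} depend only on $\nabla\Pi\rho$ and $\nabla^2\Pi\rho$, and these are controlled via \eqref{bounded:Pi}. Alternatively, one can keep the coefficients $a^{jk}$ uniformly elliptic and continuous and invoke the Denk--Hieber--Prüss theory directly.

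\emph{Step 2: the operator $\mathcal B$.} We have
\[
\mathcal B(0)=\gamma\,\mathcal D(0)\T(0)\mathcal L(0),
\]
where $\mathcal L(0)$ is the linearized mean curvature on $\Gamma_e$ (a Laplace--Beltrami operator plus lower order terms), and $\mathcal D(0)\T(0)$ is the Dirichlet-to-Neumann operator for the mixed problem. The principal symbol of $\mathcal B(0)$ is $\gamma|\xi|^3$, a third-order pseudodifferential operator of the Mullins--Sekerka type.

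Maximal regularity of $\mathcal B(0)$ in $B^{1-1/q}_{qq}(\Gamma_e)$ with domain $B^{4-1/q}_{qq}(\Gamma_e)$ is the main obstacle. I would take it from \cite{cui2009lie} or Escher--Simonett, where $\mathcal B(0)$ is shown to generate an analytic semigroup with bounded $H^\infty$-calculus, or equivalently to be $\mathcal R$-sectorial. The argument localizes to half-space model problems and uses the symbol $\gamma|\xi|^3$ together with lower order perturbations. The Besov setting is $\mathrm{UMD}$ for $q<\infty$, so Weis' theorem applies.

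Once this is in hand, \eqref{prop:AD}, \eqref{prop:L} and \eqref{prop:ST} give $\mathcal B\in C^\infty(\mathcal O_\delta,\L(E_1^2,E_0^2))$, and openness of $\mathscr{MR}$ again yields \eqref{MR:AB} after shrinking $\delta_0$.

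\emph{Step 3: $\mathcal G$, $\mathcal F$ and the block operator $\mathbb A$.} The regularity of $\mathcal G$ and $\mathcal F$ follows by composing smooth maps.
\begin{itemize}
\item For $\mathcal G$: we have $\K_1(\rho)\in B^{2-1/q}_{qq}$, $\T(\rho)$ maps this into $H^2_q$, and $\mathcal D(\rho)$ maps $H^2_q$ into $B^{1-1/q}_{qq}$. The $u$-term is handled the same way, with $\Sr(\rho)$ acting on $u-\sigma^\star$.
\item For $\mathcal F=\M\,\Pi\,\mathcal G$: we use \eqref{op:E:estimate} with $k=2$ for $u$ in $B^{3/2}_{qq}\hookrightarrow H^1_q$, and the multiplication estimates of Lemma~\ref{Lemma:pointwise:super}. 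This gives the value in $L^q$, and the $C^1$ dependence is inherited from each factor.
\end{itemize}

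Finally, $\mathbb A(U)$ is upper triangular with diagonal entries in $\mathscr{MR}$. The off-diagonal entry $\mathcal C(\rho,u)$ maps $E_1^2$ into $E_0^1$ boundedly: $\mathcal B(\rho)$ sends $E_1^2$ into $B^{1-1/q}_{qq}$, $\Pi$ sends that into $H^1_q$, and multiplication by $\M(\rho,u)\in H^1_q\hookrightarrow L^\infty$ (for $q>n$) lands in $L^q$.

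Maximal regularity of $\mathbb A$ then follows by solving the triangular system in two stages:
\begin{enumerate}
\item solve the $\rho$-equation first, $\rho'+\mathcal B\rho=f_2$;
\item then solve $u'+\mathcal A u=f_1-\mathcal C\rho$, whose right-hand side lies in $L^p(J;L^q)$.
\end{enumerate}
This gives $\mathbb A\in C^\infty(\mathcal U_{\delta_0},\mathscr{MR}(\mathbb X_1,\mathbb X))$.
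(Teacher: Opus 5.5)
There is a genuine gap, and it sits exactly where your route departs from the paper's. The paper fixes an arbitrary $\rho\in\mathcal{O}_{\delta_0}$ and proves maximal regularity of $\mathcal{A}(\rho)$ and $\mathcal{B}(\rho)$ directly. You instead perturb from $\rho=0$ using openness of $\mathscr{MR}$.

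The problem is that $\mathcal{O}_{\delta_0}$ is small only in $C^1(\Gamma_e)$. It contains elements of arbitrarily large $B^{3-1/q}_{qq}$-norm: in a chart, take a cut-off times $\epsilon k^{-1}\sin(k\xi_1)$ and let $k\to\infty$. The size of the perturbation, however, is governed by the strong norm:
\begin{itemize}
\item $\|\mathcal{A}(\rho)-\mathcal{A}(0)\|_{\mathscr{L}(E_1^1,E_0^1)}$ involves the lower-order coefficients $a^j(\nabla\Pi\rho,\nabla^2\Pi\rho)$ in $L^q$, hence $\|\rho\|_{B^{2-1/q}_{qq}}$;
\item $\|\mathcal{B}(\rho)-\mathcal{B}(0)\|_{\mathscr{L}(E_1^2,E_0^2)}$ involves the multipliers $P^{jk}(\rho,\partial\rho)-P^{jk}(0,0)$ acting on $B^{2-1/q}_{qq}(\Gamma_e)$, hence $\|\rho\|_{B^{3-1/q}_{qq}}$.
\end{itemize}
Your sentence that $\nabla^2\Pi\rho$ is ``controlled via \eqref{bounded:Pi}'' is the faulty step. \eqref{bounded:Pi} bounds it by $\|\rho\|_{B^{2-1/q}_{qq}}$, not by $\|\rho\|_{C^1}$, and no choice of $\delta_0$ fixes this. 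So openness yields \eqref{MR:AB} only on a small $B^{3-1/q}_{qq}$-ball around $0$, which is strictly smaller than $\mathcal{O}_{\delta_0}$. That weaker statement would suffice for the quasilinear existence theory behind Theorem~\ref{thm:exist}, which only needs maximal regularity at $U_0$, where $\rho=0$. But it is not the lemma.

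For $\mathcal{A}$, your fallback repairs this part. Using uniform ellipticity and continuity of $a^{jk}(\nabla\Pi\rho)$ together with the Denk--Hieber--Pr\"uss theorem at each fixed $\rho$ is precisely the paper's argument via \cite[Theorem 6.3.2]{pruss2016moving}.

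For $\mathcal{B}$ you have no fallback, and maximal regularity of $\mathcal{B}(0)$ from the Mullins--Sekerka literature does not propagate to all of $\mathcal{O}_{\delta_0}$. What is missing is a direct treatment of the variable-coefficient third-order operator $\gamma\mathcal{D}(\rho)\mathcal{T}(\rho)\mathcal{L}(\rho)$ at each fixed $\rho$:
\begin{itemize}
\item localize and freeze coefficients;
\item use $C^1$-smallness only to keep the frozen principal symbols of $\mathcal{L}(\rho)$ and of the transformed Dirichlet-to-Neumann map uniformly elliptic with the correct sign;
\item use the $B^{3-1/q}_{qq}$-regularity of $\rho$ only to absorb commutators and lower-order terms by interpolation.
\end{itemize}
This is what the paper imports from \cite{Abels2021} and the appendix of \cite{ChenLiu2026}.

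The remaining steps are fine. Your two-stage solution of the upper-triangular system is a direct proof of the block result the paper cites as \cite[Proposition 5.11]{ChenLiu2026}. There is one slip in that part. For $u\in B^{3/2}_{qq}$, \eqref{op:E:estimate} applies with $k=1$, not $k=2$, and gives only $\mathcal{M}(\rho,u)\in L^q(\Omega_0)$, not $H^1_q\hookrightarrow L^\infty$. Put the $L^\infty$-bound on $\Pi\mathcal{B}(\rho)v\in H^1_q(\Omega_0)\hookrightarrow L^\infty$ instead, as the paper does.
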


\begin{proof}
For simplicity, we assume that $\Sigma=\emptyset$.
Fix $\rho\in \mathcal{O}_{\delta_0}$. In view of \eqref{local:expression:A} and Lemma \ref{Lemma:pointwise:super}, 
there exist coefficients $\{a^{jk}\}_{1\leq j,k\leq n}\subset H_q^2(\Omega_0)$ and $\{a^j\}_{j=1}^n\subset H_q^1(\Omega_0)$ such that 
\begin{equation}
 \mathcal{A}(\rho)= -a^{ik} \partial_{ik}  - a^j \partial_j. 
\end{equation}
From the definition \eqref{def:Arho}, it follows that the coefficients $a^{ik}$ satisfy a strong ellipticity condition.  Using the regularity of the  coefficients and 
\cite[Theorem 6.3.2]{pruss2016moving}, we can deduce that $\mathcal{A}\in C^\infty(\mathcal{O}_{\delta_0}, \mathscr{MR}(E_1^1,E^1_0))$. By \eqref{expression:L:K} and Lemma \ref{Lemma:pointwise:super},  we have that 
$\{b^j\}_{j=1}^n\subset H_q^2(\Omega_0)$ and $\{P^{jk}\}_{1\le j,k\leq n-1}\subset B_{qq}^{2-1/q}(\Gamma_e)$.  Using the regularity of the coefficients, to prove the maximal regularity that 
$$
\mathcal{B}(\rho)\in C^\infty(\mathcal{O}_{\delta_0}, \mathscr{MR}(E_1^2,E^2_0)),$$ we only need to follow the process as in \cite{Abels2021} and \cite[Appendix]{ChenLiu2026}. 
The smoothness of $\mathcal{G}$ and $\mathcal{F}$ follows directly from \eqref{bounded:Pi}, \eqref{prop:AD} and \eqref{prop:ST}.
By \eqref{op:E:estimate}, for every $(\rho,u)\in \mathcal{U}_\delta$, 
we obtain   that 
 \begin{align*}
 \|\mathcal{C}(\rho,u)v\|_{L^q(\Omega_0)}\leq & \|\mathcal{M}(\rho,u+\sigma_b)\|_{L^q(\Omega_0)}\|\Pi\mathcal{B}(\rho)v\|_{H_q^1(\Omega_0)} 
 \leq C \|v\|_{B_{qq}^{4-1/q}(\Gamma_e)}
 \quad \forall\,v\in B_{qq}^{4-1/q}(\Gamma_e).
 \end{align*}
Combining \eqref{MR:AB} and \cite[Proposition 5.11]{ChenLiu2026} yields \eqref{MR:A:mat}.

\end{proof}

\begin{proof}[Proof of Theorem \ref{thm:exist}]
 Fix $p>4$, and extend $f$ to $\mathbb R$. 
 Then, for every $\gamma \in (0, 3/4)$,  the  set $\mathcal{U}_{\delta_0}$ is an open subset of $ \mathbb{X}_{\gamma,p}$.
By \ref{Lemma:op:MR}, we can deduce that 
\begin{align}
 \|\mathbb{A}(U)-\mathbb{A}(U')\|_{\mathscr{L}(\mathbb X_1,\mathbb X)}
 \leq& C \|U-U'\|_{\mathbb X_{\gamma,p}}
 \quad \forall\, U,U'\in \mathcal{U}_{\delta_0}\cap \mathbb X_{\gamma,p},\label{unif:Lip:A}\\[1mm]
 \|\mathbb F(U)-\mathbb F(U')\|_{\mathbb X}\leq& C \|U-U'\|_{\mathbb X_{\gamma,p}}  \quad \forall\,  U,U'\in \mathcal{U}_{\delta_0}\cap \mathbb X_{\gamma,p},\label{unif:Lip:F}
\end{align}
where $C>0$ is independent of $f\in \mathscr{A}$.
In view of \eqref{unif:Lip:A} and \eqref{unif:Lip:F}, we may invoke the fixed-point argument of
\cite{ChenLiu2026} (a minor modification of
\cite[Theorem 5.1.1]{pruss2016moving}) to obtain the uniform local existence of solutions. 
The non-negativity of $\sigma(f)$ follows from the comparison principle (see \cite{ChenLiu2026} for more details). This completes the proof.   

\end{proof}

\subsection{Proof of Lemma \ref{lem:fixedpoit}, Theorem \ref{thm:main} and  Lemma \ref{lem:28}}\label{subsec:42}

The purpose of this subsection is to prove Lemma \ref{lem:fixedpoit}, Theorem \ref{thm:main} and Lemma \ref{lem:28}. The proof follows the fixed-point strategy
described above, for which we need the following two lemmas:
Lemma \ref{lem:compo} recalls the well-posedness and the continuity of the composition operators $\sigma \mapsto f(\sigma)$  in the framework of Besov spaces;
Lemma \ref{lem:S} establishes the regularity and continuity of the solution
map $\mathbb S$.

\begin{Lemma}\label{lem:compo}
Let $f\in \mathscr{A}$, and  $\sigma\in B_{qq}^s(U)$ for $s\in (0,1)$. Then, the following assertions hold. 

\begin{enumerate}
    
\item[\textup{(i)}] For all non-negative $\sigma\in B_{qq}^s(U)$,  we have that $\|f(\sigma)\|_{B_{qq}^s(U)}\leq \|f'\|_{L^\infty(\mathbb R^+)}\|\sigma\|_{B_{qq}^s(U)}$. 

\item[\textup{(ii)}] For all non-negative $\sigma,\sigma'\in B_{qq}^s(U)$, we have that 
\begin{align*}
\|f(\sigma_1)-f(\sigma_2)\|_{B_{qq}^s(U)} 
\leq& (\|f''\|_{L^\infty(\mathbb R^+)}+|f'(0)|)
\Big\{ (\|\sigma_1\|_{B_{qq}^s(U)}+\|\sigma_2\|_{B_{qq}^s(U)})\|\sigma_1-\sigma_2\|_{L^\infty(U)} \notag\\
&+(\|\sigma_1\|_{L^\infty(U)}+\|\sigma_2\|_{L^\infty(U)}+1)\|\sigma_1-\sigma_2\|_{B_{qq}^s(U)}
\Big\}. 
\end{align*}

\end{enumerate}

\end{Lemma}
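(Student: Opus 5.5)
We work with the intrinsic difference norm of $B^s_{qq}(U)$ for $s\in(0,1)$,
\[
\|v\|_{B^s_{qq}(U)}\simeq \|v\|_{L^q(U)}+[v]_{s,q},\qquad [v]_{s,q}^q:=\int_U\int_U\frac{|v(x)-v(y)|^q}{|x-y|^{n+sq}}\,dx\,dy ,
\]
which is valid for bounded Lipschitz $U$ and $0<s<1$. The estimates then become pointwise inequalities for $f(\sigma(x))$ and $f(\sigma(x))-f(\sigma(y))$, integrated against the kernel. Non-negativity of $\sigma$ is used only so that all arguments of $f$ lie in $\mathbb R^+$, where the bounds on $f'$ and $f''$ are available.

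\textbf{Part (i).} By the mean value theorem, $|f(\sigma(x))-f(\sigma(y))|\le \|f'\|_{L^\infty(\mathbb R^+)}|\sigma(x)-\sigma(y)|$, so the seminorm of $f(\sigma)$ is bounded by $\|f'\|_\infty[\sigma]_{s,q}$.

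For the $L^q$ part we write $f(\sigma)=f(0)+(f(\sigma)-f(0))$. The second term is bounded pointwise by $\|f'\|_\infty|\sigma|$. The constant $f(0)$ does not fit the stated homogeneous bound, so the estimate must be read either for $f(0)=0$ (as in \ref{H1}) or with an additive term $|f(0)||U|^{1/q}$. I would state that precisely: the homogeneous form holds under $f(0)=0$ or modulo constants.

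\textbf{Part (ii).} We use the representation
\[
f(a)-f(b)=(a-b)\int_0^1 f'(b+\theta(a-b))\,d\theta ,
\]
and set $h(x):=\int_0^1 f'\big(\sigma_2(x)+\theta(\sigma_1(x)-\sigma_2(x))\big)\,d\theta$ and $d:=\sigma_1-\sigma_2$, so that $f(\sigma_1)-f(\sigma_2)=h\,d$.

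The arguments of $f'$ stay in $\mathbb R^+$ because $\sigma_1,\sigma_2\ge 0$. Hence $|h|\le |f'(0)|+\|f''\|_\infty(\|\sigma_1\|_\infty+\|\sigma_2\|_\infty)$, since $f'(r)=f'(0)+\int_0^r f''$. This gives the $L^q$ part together with the $(\|\sigma_1\|_\infty+\|\sigma_2\|_\infty+1)\|d\|_{L^q}$ piece.

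For the seminorm we use the product difference
\[
h(x)d(x)-h(y)d(y)=h(x)\big(d(x)-d(y)\big)+d(y)\big(h(x)-h(y)\big).
\]
The first term contributes $\|h\|_\infty[d]_{s,q}$, which is bounded as above.

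For the second term, $f''$ being bounded gives
\[
|h(x)-h(y)|\le \|f''\|_\infty\big(|\sigma_1(x)-\sigma_1(y)|+|\sigma_2(x)-\sigma_2(y)|\big),
\]
obtained by averaging over $\theta$ with weights $\theta$ and $1-\theta$. Multiplying by $|d(y)|\le\|d\|_{L^\infty}$ and integrating against the kernel gives $\|f''\|_\infty([\sigma_1]+[\sigma_2])\|d\|_{L^\infty}$. This is exactly the first term in the braces. Collecting terms and bounding $\max\{\|f''\|_\infty,|f'(0)|\}$ by their sum yields the claimed inequality, with the implicit norm-equivalence constant absorbed.

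\textbf{Main obstacles.} The first is the use of $\|f''\|_\infty$, which is not controlled by membership in $\mathscr A$ (only $W^{1,\infty}$). The lemma implicitly assumes $f\in W^{2,\infty}$, which holds for the functions actually used, namely $\mathscr E f$ with $f\in\mathscr S_N\subset C^2(I)$ by \ref{E3}. I would state this explicitly.

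The second is the choice of norm. The statement's constants are sharp only for the difference-quotient norm, so I would fix that norm on $B^s_{qq}(U)$ at the outset rather than use an interpolation definition.
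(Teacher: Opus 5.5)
Your proof is correct, and for (ii) it has the same skeleton as the paper's: write $f(\sigma_1)-f(\sigma_2)$ as a divided difference times $d=\sigma_1-\sigma_2$, then apply a Leibniz-type bound in $B^s_{qq}(U)\cap L^\infty(U)$.

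The difference lies in how the divided difference is controlled. The paper cites Bourdaud--Sickel for (i). In (ii) it subtracts $f'(0)$, so that $H(x,y)=\frac{f(x)-f(y)}{x-y}-f'(0)$ vanishes at the origin. This is done so that (i) can be invoked for $H(\cdot,0)$ to bound the full norm $\|H(\sigma_1,\sigma_2)\|_{B^s_{qq}(U)}$. The two-variable remainder $H(\sigma_1,\sigma_2)-H(\sigma_1,0)$ is not literally covered by the one-variable statement (i), and the paper leaves it implicit.

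You instead fix the Gagliardo norm and argue pointwise on differences. Then only the seminorm $[h]_{s,q}$ enters, because the $L^q$ part of $hd$ is absorbed by $\|h\|_{L^\infty}\|d\|_{L^q}$. Since the seminorm ignores constants, no normalization at $0$ is needed, and the two-variable dependence is handled directly by the $\theta$-averaging. This makes the argument self-contained and shows that the constant $1$ is exact for that norm.

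Both of your caveats are also right:
\begin{itemize}
\item The paper explicitly drops $f(0)=0$ from $\mathscr{A}$, so any constant $f\equiv c\in(0,M_{\mathscr{A}}]$ lies in $\mathscr{A}$ and violates (i): the left side is $c|U|^{1/q}>0$ while the right side is $0$. Hence (i) must be read with $f(0)=0$, which is effectively what the paper does when it applies (i) to $H(\cdot,0)$.
\item (ii) carries information only for $f\in W^{2,\infty}$. This is exactly the setting in which it is used later, for $\mathscr{E}f$ with $f\in\mathscr{S}_N$ via \ref{E3}.
\end{itemize}
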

\begin{proof}
Assertion (i) is precisely  {\cite[Theorem 6]{BourdaudSickel2011}}. 
To prove assertion (ii), we consider 
\begin{equation*}
H(x,y):=\frac{f(x)-f(y)}{x-y} - f'(0)\quad\forall x,y\in \mathbb R^+\quad\textup{with}
\quad H(0,0)=0.
\end{equation*}
Then, we have that $\|\partial_i H\|_{L^\infty(\mathbb R^+\times \mathbb R^+)}\leq \|f''\|_{L^\infty(\mathbb R^+)}$ for $1\le i\le 2$. With $s\in (0,1)$, this yields 
\begin{align*}
&\|f(\sigma)-f(\sigma')\|_{B_{qq}^s(U)} 
=\|H(\sigma,\sigma')(\sigma-\sigma')+f'(0)(\sigma-\sigma')\|_{B_{qq}^s(U)}\\
\leq& \|H(\sigma,\sigma')\|_{L^\infty(U)} \|\sigma-\sigma'\|_{B_{qq}^s(U)}+\|H(\sigma,\sigma')\|_{B_{qq}^s(U)} \|\sigma-\sigma'\|_{L^\infty(U)}+|f'(0)|\|\sigma-\sigma'\|_{B_{qq}^s(U)}.
\end{align*}
By (i) and  the decomposition $H(\sigma,\sigma')=H(\sigma,0)+(H(\sigma,\sigma')-H(\sigma,0))$,
\begin{equation*}
\|H(\sigma,\sigma')\|_{B_{qq}^s(U)}\leq 
\|f''\|_{L^\infty(\mathbb R^+)}(\|\sigma\|_{B_{qq}^s(U)}+\|\sigma'\|_{B_{qq}^s(U)}).
\end{equation*}
Similarly, one sees that
\begin{equation*}
\|H(\sigma,\sigma')\|_{L^\infty(U)}\leq 
\|f''\|_{L^\infty(\mathbb R^+)}(\|\sigma\|_{L^\infty(U)}+\|\sigma'\|_{L^\infty(U)}).
\end{equation*}
    
\end{proof}

\begin{Lemma}\label{lem:S}
Let $s\in(n/q,1)$. Suppose that $U\subset \R^n$ is a smooth subdomain  such that 
\begin{equation}\label{def:U}
U\Subset \Om_0\backslash \overline{V_{3\delta}(\Gamma_e)}
\quad\textup{with}\quad \partial U\cap \Gamma_e\neq \emptyset\quad\textup{and}
\quad x_0\in \partial \overline{U}\cap \Gamma_e. 
\end{equation}
 Then, the following
assertions hold, with all constants independent of $f\in \mathscr{A}_I$.
\begin{enumerate}
\item[\textup{(S1)}] \emph{(Well--posedness).} Let $p\in (1,\infty)$. For every $f\in \mathscr{A}_I$,
the solution $(\sigma(f),\varpi(f),\Gamma(f))$ of  the  truncated DCIS model \eqref{eq:DCIS:full:trucated}  satisfies the higher regularity
\begin{equation}\label{sigma:regular}
\|\sigma(f)\|_{L^p(J;B^{2+s}_{qq}(U))}
+\|\sigma(f)\|_{H^1_p(J;B^{s}_{qq}(U))} \le C.
\end{equation}
In particular, the operator $\mathbb S:\mathscr{A}_I\to L^2(J)$ defined in \eqref{eq:Sdef} is well defined.

\item[\textup{(S2)}] \emph{(Continuity).} Let $\{f_k\}_{k=1}^\infty$ be a sequence such that  $f_k\to\widehat f$ in $L^\infty(I)$. Then, it holds that 
$\mathbb S(f_k)\to \mathbb S(\widehat f)$ in $L^2(I)$.

\end{enumerate}
\end{Lemma}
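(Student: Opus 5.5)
The plan is a localized maximal-regularity argument for $\sigma(f)$ on a fixed subdomain $U$ that stays away from the moving interface, followed by a compactness-plus-uniqueness argument for continuity.

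\textbf{Step 1: reduction to a fixed-domain problem near $x_0$.} By Theorem~\ref{thm:exist}(i), $\Gamma(t)\subset V_\delta(\Gamma_e)$ for all $t\in J$. Since $\phi\equiv0$ outside $(-3\delta,3\delta)$, $\Theta_\rho$ is the identity on $\Omega_0\setminus \overline{V_{3\delta}(\Gamma_e)}$. Hence on a slightly larger smooth domain $U'$ with $U\Subset U'$ (same geometric constraints, so that $\partial U'$ meets only $\Sigma$, where the Neumann condition holds), we have $u=\sigma(f)$, and $\sigma$ solves the fixed-domain equation
\[
\partial_t\sigma-\Delta\sigma=-(\mathscr E f)(\sigma)
\]
with no coupling to $\rho$.

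\textbf{Step 2: uniform bounds (S1) by a cut-off and bootstrap.} Take a cut-off $\chi\in C_c^\infty$ with $\chi\equiv1$ on $U$, supported in $U'$ and compatible with the Neumann boundary. Then $v:=\chi\sigma$ satisfies
\[
\partial_t v-\Delta v=-\chi(\mathscr E f)(\sigma)-2\nabla\chi\cdot\nabla\sigma-\sigma\Delta\chi=:h .
\]
Theorem~\ref{thm:exist} gives uniform bounds on $\sigma$ in $H^1_p(J;L^q)\cap L^p(J;H^2_q)$ and in $C(J;C^{1}(\overline{\Omega}_0))$ via the embedding for $q>n$, $p>4$.

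For the lower-order terms, Lemma~\ref{lem:compo}(i), non-negativity (Theorem~\ref{thm:exist}(iii)) and \ref{E3} give
\[
\|(\mathscr E f)(\sigma)\|_{B^s_{qq}}\le \epsilon_{\mathscr A}^{-1}\|f\|_{W^{1,\infty}}\|\sigma\|_{B^s_{qq}},
\]
which is uniform in $f$. The gradient term $\nabla\chi\cdot\nabla\sigma$ lies in $L^p(J;H^1_q)\hookrightarrow L^p(J;B^s_{qq})$ because $s<1$. Therefore $h\in L^p(J;B^s_{qq})$ uniformly in $f$.

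Remark~\ref{rem:maximalp} (maximal regularity of the Neumann Laplacian on $B^s_{qq}$) then yields \eqref{sigma:regular}, after handling the nonzero initial value $\chi\sigma_0$. I expect this to need a compatibility/regularity assumption on $\sigma_0$ in the trace space $B^{2+s-2/p}_{qp}$. If $\sigma_0$ is not regular enough, I would split off the free heat evolution of $\chi\sigma_0$, or use a time weight.

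Since $s>n/q$, we have $B^{s}_{qq}\hookrightarrow C$, so $\Delta\sigma(f;x_0,\cdot)\in L^p(J)\subset L^2(J)$ is a well-defined point trace. This shows that $\mathbb S$ is well defined. (I would interpret $x_0\in\overline U$ as lying where the trace makes sense.)

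\textbf{Step 3: continuity (S2) by compactness and uniqueness.} Let $f_k\to\widehat f$ in $L^\infty(I)$. Since $\mathscr E$ is linear and bounded, $\mathscr Ef_k\to\mathscr E\widehat f$ in $L^\infty(\R^+)$. The uniform bounds of Theorem~\ref{thm:exist}(ii) and of Step 2, together with the Aubin--Lions lemma, give a subsequence along which $(u_k,\rho_k)$ converges strongly in lower norms. The limit solves \eqref{forward:transform} with $\widehat f$, so by uniqueness it equals $U(\widehat f)$, and the whole sequence converges.

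For the localized difference $v_k-\widehat v$, maximal regularity in $B^s_{qq}$ gives a bound by the $L^p(J;B^s_{qq})$ norm of the source difference. This contains $\chi[(\mathscr Ef_k)(\sigma_k)-(\mathscr E\widehat f)(\widehat\sigma)]$, which I split as
\[
[(\mathscr Ef_k)-(\mathscr E\widehat f)](\sigma_k)+[(\mathscr E\widehat f)(\sigma_k)-(\mathscr E\widehat f)(\widehat\sigma)] .
\]
The second term is controlled by Lemma~\ref{lem:compo}(ii), provided $\widehat f\in W^{2,\infty}$. For the general case I would use \ref{E3} on the $C^2$ discrete spaces, or an approximation argument.

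\textbf{Main obstacle.} The first term, $(\mathscr Ef_k-\mathscr E\widehat f)(\sigma_k)$, is the delicate one: $L^\infty$ convergence of $f_k$ does not control the $B^s_{qq}$ norm of this composition. My first attempt is to interpolate, using
\[
\|F(\sigma)\|_{B^{s}_{qq}}\le \|F\|_\infty^{1-s/s'}\|F'\|_\infty^{s/s'}\|\sigma\|_{B^{s'}_{qq}}^{s/s'}
\]
for some $s'\in(s,1)$. Here $\|F\|_\infty\to0$, $\|F'\|_\infty$ is bounded by $2M_{\mathscr A}$, and the estimate is applied at a slightly smaller $s$ that still exceeds $n/q$. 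This gives convergence of $\Delta v_k(x_0,\cdot)$ in $L^p(J)$, and hence of $\mathbb S(f_k)$ in $L^2$.
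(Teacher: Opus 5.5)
Your proof of (S1) is the paper's argument: a cut-off $\chi$, Lemma~\ref{lem:compo} for the reaction term, the commutator treated as a first-order term, and maximal regularity of the Neumann Laplacian on $B^s_{qq}$.

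\textbf{(S2): a different route.} The paper proves a localized stability bound $\|\sigma(f_1)-\sigma(f_2)\|_{\mathsf H^1_p(J;(H^2_q(U),L^q(U)))}\le C\|f_1-f_2\|_{L^\infty(I)}$ by $L^q$ maximal regularity on $U$ and Gronwall. It then interpolates this against the uniform $L^2(J;B^s_{qq}(U))$ bound on $\Delta\sigma$ from (S1) to reach point values at $x_0$. That route is quantitative, and the paper reuses it in Step~1 of the proof of Lemma~\ref{lem:28}. As written, however, it treats the difference equation on $U$ as closed. Maximal regularity on $U$ is applied as if $e$ had homogeneous data on $\partial U\setminus\Sigma$. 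The commutator term, which lives where $\chi\not\equiv 1$, is bounded by the same localized norm of $e$. In fact $\sigma_1-\sigma_2$ there is driven by the whole coupled system, including $\rho_1-\rho_2$, so a rigorous version needs a global stability estimate for $(u,\rho)$.

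Your compactness--uniqueness step supplies exactly this global information: Aubin--Lions on the maximal-regularity class, passage to the limit in \eqref{forward:evolutionX}, then uniqueness. The price is that it is only qualitative, so it would not give the Lipschitz bound used for Lemma~\ref{lem:28}.

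Once you have that step, the source splitting in your last paragraph is unnecessary. Interpolate the uniform bound of (S1) for some $s_1\in(s,1)$ against $\sigma_k\to\widehat\sigma$ in $L^2(J;H^{2-\epsilon}_q(U'))$. This gives $\sigma_k\to\widehat\sigma$ in $L^2(J;B^{2+s}_{qq}(U))$ directly, which is the paper's closing step transplanted to your setting. It also removes your caveat $\widehat f\in W^{2,\infty}$. Your own $L^q$/$B^{s'}_{qq}$ interpolation would remove that caveat anyway, since a Lipschitz function maps bounded sets of $B^{s'}_{qq}$, $s'<1$, to bounded sets.

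\textbf{Two loose ends.} First, the initial-data worry in (S1) is legitimate but harmless for $\mathbb S$. For $p=2$ (indeed any $p<2/s$) one has $s+2-2/p<2$, so $\chi\sigma_0\in H^2_q\hookrightarrow B^{s+2-2/p}_{qp}$. The Neumann compatibility on $\Sigma$ comes from \ref{A2} together with $\partial_\n\chi=0$. This already gives $\Delta\sigma(f;x_0,\cdot)\in L^2(J)$. For $p>2/s$ the bound really does need more regularity of $\sigma_0$ near $x_0$, and the paper passes over this. Splitting off the heat flow of $\chi\sigma_0$ does not help, since that flow has the required regularity only if $\chi\sigma_0$ already lies in the same trace space. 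In Step~3 the difference has zero initial value, so nothing extra is needed there.

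Second, $\mathscr E f_k\to\mathscr E\widehat f$ in $L^\infty(\R^+)$ does not follow from \ref{E3}, which controls $\mathscr E$ only in $C^1$ and $W^{1,\infty}$. The explicit extension given after \ref{E3} involves $f'(a_I)$ and $f'(b_I)$, which need not converge under uniform convergence. You need either $C^1(I)$ convergence of $f_k$ (automatic inside a fixed $\mathscr S_N$) or an $L^\infty$-stable extension. The paper's bound on $F_1$ makes the same tacit assumption.
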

\begin{proof}
\noindent (i) Since $\sigma(f)=\Theta_*^\rho (u(f)+u_b) $ and
$\Theta_*^\rho = \textup{id}$ on $U$, 
Theorem \ref{thm:exist} yields the regularity estimate 
\begin{equation}\label{uniformbd:simga}
\|\sigma(f)\|_{\mathsf{H}_p^1(J,(H_q^2(U),L^q(U)))}\le C.
\end{equation}
Let $\chi:\mathbb R^n\to \R$ be a smooth cut-off function such that $\textup{supp}\,\chi\cap \Omega \Subset \Om_0\backslash \overline{V_{3\delta}(\Gamma_e)} $ and $\chi \equiv 1 $ on  $ U$  in a neighborhood of $x_0$, and define $\sigma_\chi := \sigma \chi$.
This function satisfies the equation
\begin{equation*}
\partial_t \sigma_\chi -\Delta \sigma_\chi = -f(\sigma)\chi+[\chi,\Delta]\sigma \quad \textup{in} \quad U.
\end{equation*}
By Lemma \ref{lem:compo} and \eqref{uniformbd:simga}, it follows that 
$$
\|f(\sigma)\|_{L^p(I;B^{s}_{qq}(U))}\leq M \|\sigma\|_{L^p(I;B^{s}_{qq}(U))}.
$$
Since $[\chi,\Delta]$ is a differential operator of order 1, \eqref{uniformbd:simga} implies that
$$
\| [\chi,\Delta]\sigma\|_{L^p(I;B^{s}_{qq}(U))}\leq M \|\sigma\|_{L^p(I;B^{s+1}_{qq}(U))}.
$$
Maximal regularity on {$B^{s}_{qq}(U)$ now implies that the estimate
\eqref{sigma:regular} holds. The trace continuity $B^{s}_{qq}(U)\embed W^{s-1/q,q}(\partial U)\embed C(\partial U)$} completes the proof.

\noindent (ii) Fix $f_1,f_2\in \mathscr{A}$ and set
$\sigma_i:=\sigma(f_i)$. Let $p\in (4,\infty)$. We first prove the estimate
\begin{equation}\label{claim:estimate:w}
\|\sigma(f_1)-\sigma(f_2)\|_{{}_0\mathsf{H}_p^1((0,t);(H_q^2(U),L^q(U)))}\leq
C \|f_1-f_2\|_{L^\infty(I)}
\quad\forall f_1,f_2\in \mathscr{A}.
\end{equation}
The difference $e:=\sigma_1-\sigma_2$ satisfies the equation
\begin{equation}\label{eq:diffeq}
\partial_t e-\Delta e=F_1+F_2+F_3\quad\text{in} \quad Q_T:=J\times U,
\quad {e(0)=0},
\end{equation}
where
$$
F_1:=\chi\cdot (\mathscr{E} f_1- \mathscr{E} f_2)(\sigma_1),
\quad F_2:=\chi\cdot((\mathscr{E}f_2)(\sigma_1)-(\mathscr{E}f_2)(\sigma_2)),
\quad F_3:=[\chi,\Delta]e.
$$
By maximal regularity on $L^q(U)$ (Remark \ref{rem:maximalp}), it follows that 
\begin{equation}\label{w:MR}
\|e\|_{{}_0\mathsf{H}_p^1((0,t);(H_q^2(U),L^q(U)))}\leq C \sum_{i=1}^3 \|F_i\|_{L^p((0,t);L^q(U))} \quad \forall t\in J,
\end{equation}
where $C>0$ is independent of $t\in J$.
By H\"{o}lder's inequality, the Lipschitz property $\|\mathscr{E}f_i\|_{W^{1,\infty}}\leq M_{\mathscr{A}}$, and Young's inequality,
\begin{align*}
\|F_1(t)\|_{L^q(U)}\le& |\Omega|^{1/q}\|f_1-f_2\|_{L^\infty(I)},
\quad \|F_2(t)\|_{L^q(U)}\le M_{\mathscr{A}}\|{e(t)}\|_{L^q(U)}, \\[1mm]
\|F_3(t)\|_{L^q(U)}\le& M_{\mathscr{A}}\|{e(t)}\|_{W^{1,q}(U)}
\leq \varepsilon \|{e(t)}\|_{W^{2,q}(U)}+ \frac{M_{\mathscr{A}}}{\varepsilon}
\|{e(t)}\|_{L^q(U)}.
\end{align*}
Combining these estimates with \eqref{w:MR} and choosing
$\varepsilon>0$ sufficiently small, we obtain
\begin{equation}\label{w:MR2}
\|e\|_{L^p((0,t);H_q^2(U))}+\|e\|_{H_p^1((0,t);L^q(U))}
\leq C \|f_1-f_2\|_{L^\infty(I)}+\frac{M_{\mathscr{A}}}{\varepsilon}
\|e\|_{L^2((0,t);L^q(U))},
\end{equation}
which gives
\begin{equation*}
\|e\|^2_{H_p^1((0,t);L^q(U))}\leq C \|f_1-f_2\|_{L^\infty(I)}^2+C
\int_0^t \|e\|^2_{H_q^1((0,s);L^q(U))} ds.
\end{equation*}
By Gronwall's inequality, 
$$
\|e\|_{H_p^1((0,t);L^q(U))}\leq C e^{C t} \|f_1-f_2\|_{L^\infty(I)}
\quad \forall\,t\in I.
$$
Substituting this into \eqref{w:MR2}, we obtain \eqref{claim:estimate:w}.
With the estimate \eqref{claim:estimate:w} and the fact that  $f_k\to\widehat f$ in $L^\infty(I)$, we can deduce that 
\begin{equation*}
 \Delta \sigma(f_k)-   \Delta \sigma(f)\to 0 \quad \textup{in} \quad L^2(I;L^2(U)). 
\end{equation*}
On the other hand, the estimate in \textup{(S1)} shows that the sequence $\{  \Delta \sigma(f_k)-\Delta \sigma(f)\}$ is a uniformly bounded set in $L^2(I;B^{s}_{qq}(U))$. Interpolation at $\vartheta\in(0,1)$ gives
$ \Delta \sigma(f_k)-   \Delta \sigma(f) \to0$ in $L^2(0,T;B^{\vartheta s}_{qq}(U))$, and
$B^{\vartheta s}_{qq}(U)\hookrightarrow C(\overline U)$ for $\vartheta s>n/q$ 
yields the claim. Combining this with the definition of $\mathbb S$ completes the proof.

\end{proof}

The following result is standard by convexity and compactness of $\mathscr{A}^N_I$ and 
$\mathscr{A}_I$, respectively (see \cite[Proposition 4.8]{bauschke2017convex}).  
\begin{Lemma}\label{lem:P}

For every $y\in L^2(I)$ the minimizers defining $ \mathbb{P}_N(y)\in \mathscr{A}^N_I$
and $\mathbb P_0(y)\in \mathscr{A}$ exist and are unique. In addition, we have that 
\begin{align*}
\| \mathbb{P}_N(\sigma_1)\circ g-\mathbb{P}_N(\sigma_2)\circ g\|_{L^2(J)}
\le&\|\sigma_1-\sigma_2\|_{L^2(J)},
\quad \forall\sigma_1,\sigma_2\in L^2(J), \\
\| \mathbb{P}_\infty(\sigma_1)\circ g-\mathbb{P}_\infty(\sigma_2)\circ g\|_{L^2(J)}
\le&\|\sigma_1-\sigma_2\|_{L^2(J)},
\quad \forall\sigma_1,\sigma_2\in L^2(J).
\end{align*}

\end{Lemma}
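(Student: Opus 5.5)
The plan is to write $\mathbb P_N$ and $\mathbb P_\infty$ as metric projections in a Hilbert space and then invoke the standard projection theorem for nonempty closed convex sets. Let $\mathcal K: L^2(I)\to L^2(J)$ be the linear map $\mathcal K f:=f\circ g$.

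\emph{Step 1: $\mathcal K$ is a bounded isomorphism onto its range.} Since $g\in C^1([0,T])$ with $g'\ge g_0>0$, the change of variables $x=g(t)$ gives
\[
\|f\circ g\|_{L^2(J)}^2=\int_I |f(x)|^2\,\frac{dx}{g'(g^{-1}(x))}.
\]
This is bounded above and below by multiples of $\|f\|_{L^2(I)}^2$, with constants $1/g_0$ and $1/\|g'\|_\infty$. Hence $\mathcal K$ is injective with closed range, and $(f_1,f_2)\mapsto(\mathcal Kf_1,\mathcal Kf_2)_{L^2(J)}$ is an inner product on $L^2(I)$ equivalent to the usual one.

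\emph{Step 2: the image sets are closed and convex.} Convexity of $\mathscr A_I$ and $\mathscr A_I^N$ is immediate: the constraint $f'\ge0$ and the $W^{1,\infty}$-ball are both convex, and $\mathscr S_N$ is a linear space. Linearity of $\mathcal K$ then makes $\mathcal K\mathscr A_I$ and $\mathcal K\mathscr A^N_I$ convex.

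For closedness of $\mathscr A_I$ in $L^2(I)$, I would use compactness. By Arzel\`a--Ascoli the set is precompact in $C(I)$, since it is uniformly bounded and uniformly Lipschitz. An $L^2$-limit of elements of $\mathscr A_I$ is therefore a uniform limit along a subsequence. Uniform limits preserve the Lipschitz bound and monotonicity, and a weak-$*$ argument on the derivatives keeps the $W^{1,\infty}$-bound. The one subtle point is that the limit is only $W^{1,\infty}$, not $C^1$. Here I would either read the class as closed in $W^{1,\infty}$, which is the sense of \cite[Proposition 4.8]{bauschke2017convex} that the paper invokes, or simply note that the paper treats $\mathscr A_I$ as compact. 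Compactness then gives existence of the minimizer directly.

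For $\mathscr A^N_I$, closedness is easier. $\mathscr S_N$ is finite-dimensional, so all norms on it are equivalent (this is also the inverse property in Definition~\ref{def:admissible}(ii)). The constraints $f'\ge0$ and $\|f\|_{W^{1,\infty}}\le\epsilon_{\mathscr A}M_{\mathscr A}$ are closed conditions in the $C^1$ topology. So $\mathscr A^N_I$ is a closed, bounded, convex subset of a finite-dimensional space, hence compact. In both cases $\mathcal K$ maps the set onto a closed convex subset of $L^2(J)$, because $\mathcal K$ is an isomorphism onto a closed subspace.

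\emph{Step 3: existence, uniqueness and nonexpansiveness.} For $y\in L^2(J)$, $\mathbb P_N(y)$ minimizes $\|\mathcal Kf-y\|_{L^2(J)}$ over $f\in\mathscr A^N_I$. Equivalently, $\mathcal K\mathbb P_N(y)=\Pi_{C_N}(y)$, the metric projection in $L^2(J)$ onto the nonempty closed convex set $C_N:=\mathcal K\mathscr A^N_I$. Existence and uniqueness of $\Pi_{C_N}(y)$ follow from the Hilbert projection theorem. Injectivity of $\mathcal K$ from Step 1 then gives a unique minimizer $f$. The projection theorem also gives firm nonexpansiveness, $\|\Pi_C y_1-\Pi_C y_2\|\le\|y_1-y_2\|$. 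Rewritten in terms of $\mathcal K$, this is exactly the displayed inequality $\|\mathbb P_N(\sigma_1)\circ g-\mathbb P_N(\sigma_2)\circ g\|_{L^2(J)}\le\|\sigma_1-\sigma_2\|_{L^2(J)}$. The case of $\mathbb P_\infty$, written $\mathbb P_0$ in the statement, is identical with $C_\infty:=\mathcal K\mathscr A_I$.

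The main obstacle is Step 2 for the infinite-dimensional class $\mathscr A_I$, namely its closedness given the $C^1$ versus $W^{1,\infty}$ mismatch. My first attempt would be the Arzel\`a--Ascoli compactness argument, interpreting $\mathscr A_I$ via its $L^2$-closure, which changes neither the minimization nor the estimates. Steps 1 and 3 are routine.
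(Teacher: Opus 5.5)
Your proposal is correct and takes essentially the paper's route. The paper cites convexity and compactness together with the nonexpansiveness of metric projections onto closed convex sets in a Hilbert space; your Steps 1--3 make this explicit through $f\mapsto f\circ g$, and you correctly keep the Jacobian weight $1/g'(g^{-1}(x))$, which the paper drops in the identity of Step 2 of Lemma~\ref{lem:28}.

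Your caveat about $\mathscr A_I$ is a real issue in the paper, not just caution on your part. A nondecreasing kinked Lipschitz function of small slope is an $L^2$-limit of elements of $\mathscr A_I$ but is not $C^1$. So the paper's compactness claim, and the existence of $\mathbb P_\infty(y)$ in the $C^1$ class itself, only hold once one minimizes over the closed class of nondecreasing Lipschitz functions with the same bound. That is what your argument does. Note, though, that this replacement does change the range of $\mathbb P_\infty$, since the minimizer may lie in that closure rather than in $\mathscr A_I$; your remark that it "changes neither the minimization nor the estimates" is only true of the infimum and the inequalities.
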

With these preparations, we are in a position to prove  Lemma \ref{lem:fixedpoit} and  Theorem \ref{thm:main}.

\begin{proof}[Proof of Lemma \ref{lem:fixedpoit}]
Set $\tilde \sigma:=\sigma(f)$ and $e(t):=\tilde \sigma(x_0,t)-g(t)$. By
Lemma~\ref{lem:S}~\textup{(S1)}, we have that $\Delta\tilde \sigma(x_0,\cdot)\in
L^2(J)$ and $\partial_t\tilde u(x_0,\cdot)\in L^2(J)$, so  the difference $e$ is
absolutely continuous with $e(0)=\sigma_0(x_0)-g(0)=0$. Evaluating the first equation in \eqref{eq:DCIS:full:trucated} at $x_0$ and subtracting the fixed point identity a.e. on $(0,T)$  gives
$$
f(g(t))=\Delta{\tilde \sigma(x_0,t)}-g'(t) \qquad\text{a.e.}\quad\textup{on }\quad(0,T),
$$ 
which yields  
\[
e'(t)=(\mathscr{E} f)\bigl(g(t)\bigr)-(\mathscr{E} f)\bigl(g(t)+e(t)\bigr)
\qquad\text{a.e.}\quad\textup{on }\quad(0,T).
\]
Since $\mathscr{E} f$ is non-decreasing, we can obtain that 
 $(e^2)'=2e e'\le0$ a.e., and $e(0)=0$ forces $e\equiv0$.
\end{proof}

\begin{proof}[Proof of Theorem \ref{thm:main}]

\noindent \textup{(i)}  The set $\mathscr{A}^N_I$ is a convex and compact subset of a finite dimensional subspace in $L^2(I)$. In view of Lemma \ref{lem:S} and Lemma \ref{lem:P}, the operator $\mathbb{T}_N:\mathscr{A}^N_I \to \mathscr{A}^N_I$  is continuous. 
The Brouwer fixed-point theorem yields the existence of the fixed point of 
$\mathbb T_N$. The same argument also applies to $\mathbb T$.

\noindent \textup{(ii)}  The family $\{f^*_N\}$ is uniformly bounded in 
$W^{1,\infty}$. By
Ascoli--Arzel\`a theorem, every subsequence has a further subsequence, still denoted by $\{f^*_N\}$, such that 
to the limit, so $\widehat f\in \mathscr{A}$. The identity $f^*_N=(\mathbb P_N\circ \mathbb S)(f^*_N)$ is equivalent
to the variational inequality 
$$
( \mathbb S(f^*_N)-f^*_N\circ g,(v-f^*_N)\circ g)_{L^2(J)}\le 0
\quad \forall v\in \mathscr{A}^{k}_I \quad\text{and}\quad \forall k\leq N.
$$
 By Lemma~\ref{lem:S}~\textup{(S2)} and the convergence that $f^*_N \to\widehat f$ in $C(I)$, we can deduce that 
$\mathbb S(f^*_N)\to \mathbb S(\widehat f)$ in $L^2(J)$ and find that 
$$
( \mathbb S(\widehat f)-\widehat f\circ g,(v-\widehat f)\circ g)_{L^2(J)}\le 0
\quad \forall v\in \mathscr{A}^{k}_I\quad\text{and}\quad\forall k\in \mathbb N.
$$
For every $v\in \mathscr{A}$, Definition \ref{def:admissible} (iii) provides $v_k\in  \mathscr{A}_{I}^k$ with
$v_k\to v$ in $L^2(J)$. Passing to the limit and changing
variables gives
$$
( \mathbb S(\widehat f)-\widehat f\circ g,(v-\widehat f)\circ g)_{L^2(J)}\le 0 \quad \forall\, v\in L^2(I), 
$$ 
which implies $\widehat f=\mathbb T(\widehat f)$ and completes the proof.

\end{proof}

\subsection{Proof of Lemma \ref{lem:approxcont} and Lemma \ref{lem:28}}\label{subsect:homo}

\begin{proof}[Proof of Lemma \ref{lem:approxcont}]
By the self-property of $\mathbb T_N:\mathscr{A}_I^N\to \mathscr{A}_I^N$ and its continuity of
$\mathbb T_N$ from Theorem \ref{thm:main}, we can deduce that $\mathbb T_N$ admits at least one fixed point.  
Let $f_{N,\alpha}$ be a fixed point of $\mathbb T_{N,\alpha}$, and let $\alpha'\in[0,1]$. A direct computation yields that 
\begin{equation*}
  \bigl\| f_{N,\alpha} - \mathbb T_{N,\alpha'}(f_{N,\alpha}) \bigr\|_{L^2(I)}
  =(1-\alpha)\|\mathbb T_{N}(f_{N,\alpha})-f_0\|_{L^2(I)},
\end{equation*}
which, together with the inclusions that $f_{N,\alpha},f_0\in \mathscr{A}_I^N$, completes the proof. 
    
\end{proof}

\begin{proof}[Proof of Lemma \ref{lem:28}]
\noindent (Step 1) 
 In this step, we will show that 
\begin{equation}\label{lem:28:claim1}
 \|\mathbb S(f_1) -  \mathbb S(f_2)\|_{L^2(J)}\leq C \|f_1 -f_2\|_{L^2(I)}
 \quad \forall f_1,f_2\in \mathscr{A}_I^N,
\end{equation}
where $C>0$ is independent of $ f_1,f_2\in \mathscr{A}_I^N$. 
Let  $f_1,f_2\in \mathscr{A}_I^N$, and set $\sigma_i=\sigma(f_i)$. Choose $U$ to be the set as in \eqref{def:U}.
Then, the difference
$e:=\sigma_1-\sigma_2$ satisfies the equation \eqref{eq:diffeq}. By
the maximal regularity on $B^{s}_{qq}(U)$,  we can deduce that 
\begin{equation*}
\|e\|_{{}_0\mathsf{H}_p^1((0,t);(B^{s+2}_{qq}(U),B^{s}_{qq}(U)))}\leq C \sum_{i=1}^3 \|F_i\|_{L^p((0,t);B^{s}_{qq}(U))} \quad \forall t\in J.
\end{equation*}
By Lemma \ref{lem:compo}, the uniform boundedness \eqref{sigma:regular}, the continuity of $\mathscr{E}$, and the inverse property in Definition \ref{def:admissible}(ii), we obtain 
\begin{align*}
\|F_1(t)\|_{B^{s}_{qq}(U)}\le& C \|\mathscr{E}(f_1- f_2)\|_{W^{1,\infty}(\mathbb R^+)}\|\sigma_1\|_{B^{s}_{qq}(U)}\leq C\|f_1 -f_2\|_{L^2(J)}, \\[1mm]
\|F_2(t)\|_{B^{s}_{qq}(U)}\le& C \|\mathscr{E}(f_2)\|_{W^{2,\infty}(\mathbb R^+)}(\|\sigma_1\|_{B^{s}_{qq}(U)}+\|\sigma_2\|_{B^{s}_{qq}(U)})\|\sigma_1-\sigma_2\|_{B^{s}_{qq}(U)}\leq C\|w\|_{L^p(J;B^{s}_{qq}(U))},
\end{align*}
where the embedding $B^{s}_{qq}(U)\embed L^\infty(U)$ is used. By the interpolation identity 
$$
B^{s+1}_{qq}(U)=(B^{s}_{qq}(U),B^{s+2}_{qq}(U))_{1/2,q}
$$ 
for every $\varepsilon>0$, it follows  that
\begin{equation*}
\|F_3(t)\|_{B^{s}_{qq}(U)} \leq \varepsilon \|w(t)\|_{W^{2+s,q}(U)}+ \frac{C}{\varepsilon}
\|w(t)\|_{B^{s}_{qq}(U)}.
\end{equation*}
Proceeding as in the proof of Lemma \ref{lem:S} (in particular, applying Gronwall's inequality), we obtain \eqref{lem:28:claim1}. This establishes the Lipschitz estimate.

\noindent (Step 2)  The functional
$$
f\mapsto\|f\circ g-y\|_{L^2(J)}^2=\|f-y\circ g^{-1}\|_{L^2(I)}^2
$$
is continuous and convex on $\mathscr{A}^N_I$, and strictly convex since
$\|w\|_{L^2_\omega(J)}=0$ with $w\in C(J_\delta)$ implies $w\equiv0$.
Existence and uniqueness for $\mathbb{P}_N$ follow from the non-emptiness,
convexity and compactness of $\mathscr{A}^N_I$;
the same argument in the Hilbert space $L^2_\omega(J)$, with $K$
closed and convex. We next record the nonexpansiveness of $\mathbb P_N$:
\begin{equation*}
 \|\mathbb P_N(\sigma_1)\circ g- \mathbb P_N(\sigma_2)\circ g\|_{L^2(J)}\leq \|\sigma_1-\sigma_2\|_{L^2(J)}.
\end{equation*}

\noindent (Step 3) Combining the results in the previous step, we can obtain that 
\begin{equation*}
\|\mathbb T_N(f_1)-\mathbb T_N(f_2)\|_{L^2(I)}\leq C \|f_1- f_2\|_{L^2(I)}
\quad \forall f_1,f_2\in \mathscr{A}_I^N,
\end{equation*}
where $C>0$ is independent of $ f_1,f_2\in \mathscr{A}_I^N$. 
For $\alpha$ sufficiently large, the identity $$\|\mathbb T_{N,\alpha}(f_1)-\mathbb T_{N,\alpha}(f_2)\|_{L^2(I)}=(1-\alpha)\|\mathbb T_N(f_1)-\mathbb T_N(f_2)\|_{L^2(I)}$$ shows that $\mathbb T_{N,\alpha}$ is a contraction on $\mathscr{A}^N_I$; the assertion now follows 
 from the Banach fixed-point theorem.
    
\end{proof}

\section{Numerical simulations}\label{sec:num}
In this section, we present some numerical experiments to demonstrate the effectiveness of the proposed adjoint-free sperner method. The radially symmetric case is shown in Subsection \ref{subsec:rad}, while the non-symmetric case of two dimensions is exhibited in Subsection \ref{subsec:non-rad}. The numerical implementation of the forward problem is itself nontrivial; we refer to \cite{KL2024} and \cite{ChenLiu2026} for its implementation in the radially symmetric and the non-symmetric cases, respectively.

\subsection{Radially symmetric case}\label{subsec:rad}
In this subsection, we present several numerical examples arising in a simplified
DCIS model that support our
theoretical analysis. To this end, we consider the following simplified
DCIS model, which is a precisely radially symmetric case of \eqref{eq:DCIS:full}:
\begin{equation}\label{dcis}
\begin{cases}
 \partial_t \sigma =  \partial^2_r \sigma+ r^{-1}  \partial_r \sigma- f(\sigma)  +F \quad\textup{in} \quad (\varphi(t),R),\\[1mm]
  -\varphi  \partial_t \varphi=\displaystyle\int_{\varphi}^R \mu^\star (\sigma-\sigma^\star)rdr \quad\textup{in}\quad (0,T),\\[1mm]
   \sigma(t,\varphi(t))=\sigma_{\textup{b}},\quad \partial_r\sigma(t,R)=\sigma_{\textup{n}},\\[1mm]
  \sigma(0,\cdot)=\sigma_0(\cdot) \quad\textup{in}\quad (\varphi(0),R),
\end{cases}
\end{equation}
where $F\in C^\infty((0,R)\times\mathbb R)$ is an additional force
term and $\sigma_{\textup{b}},\sigma_{\textup{n}}$ are general smooth
functions. These generalizations allow us to construct smooth
solutions easily while keeping the main results of this paper valid.
In practice, we set
\[
T=1,\qquad \varphi(0)=0.5,\qquad \mu^\star=0.3,\qquad \sigma^\star=1.0,\qquad g(t)=0.5+2.5t^2,
\quad \sigma = g(t)+0.8(2t-1)(1-r),
\]
so that $\varphi(T)=3$ and the relevant range is $I=[0.5,3]$. 
In Example \ref{example:nonlinearity}, we consider three choices of the true nonlinearity $f^\dagger$,
each
of which is recovered in the space $\mathscr S_N$ of polynomials of degree at most $N$, and the mesh-size for time and space is $1/160$.
In addition, we apply Algorithm~\ref{alg:hybrid} to recover the
true nonlinearity $f^\dagger$ with the following settings:
\begin{equation*}
f_0\equiv 1,\quad
\alpha_k=2^{-k}\ \ (k=0,1,\cdots,5),\qquad \alpha_6=0,\qquad \underline{\alpha}=0.02.
\label{eq:settings}
\end{equation*}
%In the sequel, we consider the following three cases. 
\begin{Example}\label{example:nonlinearity}
\textup{(i)} $f^\dag(\sigma)=\sigma^3$, $N=4$;
\;\,\textup{(ii)} $f^\dag(\sigma)=\exp(\sigma/2)-1$, $N=6$;\;\,\textup{(iii)} $f^\dag(\sigma)=\sin \left((\pi \sigma)/6\right)$, $N=7$.
\end{Example}
\noindent To measure the error between the true nonlinearity and the recovered one,  we employ the relative $L^2$-error:
\begin{equation}\label{def:e2}
  e_2:= \|f^\dag - f\|_{L^2(I)}/\|f^\dag\|_{L^2(I)}.
\end{equation}
Figure~\ref{fig:dcis_results} shows the reconstruction results for Example \ref{example:nonlinearity}. In each case, the recovered nonlinearity (red dashed curve)
is visually indistinguishable from the true one (black solid curve). The
bottom row shows the relative $L^2$-error $e_2$ with the homotopy level
$k$: after a steady decrease in the Picard regime ($k\le 5$), the
error decays sharply at the final level $k=6$, where the Sperner search
at $\alpha_6=0$ is applied. This sharp decay is explained by the homotopy construction: the fixed points of $\mathbb T_{N,\alpha}$ usually differ for different levels $\alpha$, and $e_2$ is measured against the fixed point at $\alpha=0$; only at the final level does the map reduce to $\mathbb T_N$, whose fixed point approximates the true nonlinearity $f^\dagger$.

\begin{figurehere}
\centering
\begin{minipage}[t]{0.30\textwidth}
  \centering
  \includegraphics[width=\linewidth]{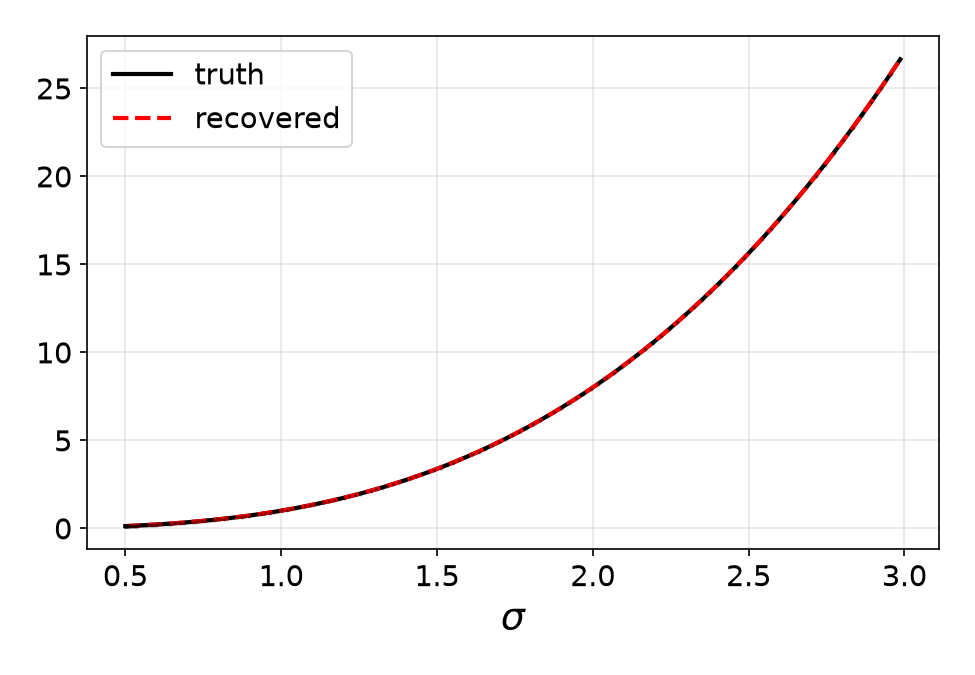}
\end{minipage}\hfill
\begin{minipage}[t]{0.30\textwidth}
  \centering
  \includegraphics[width=\linewidth]{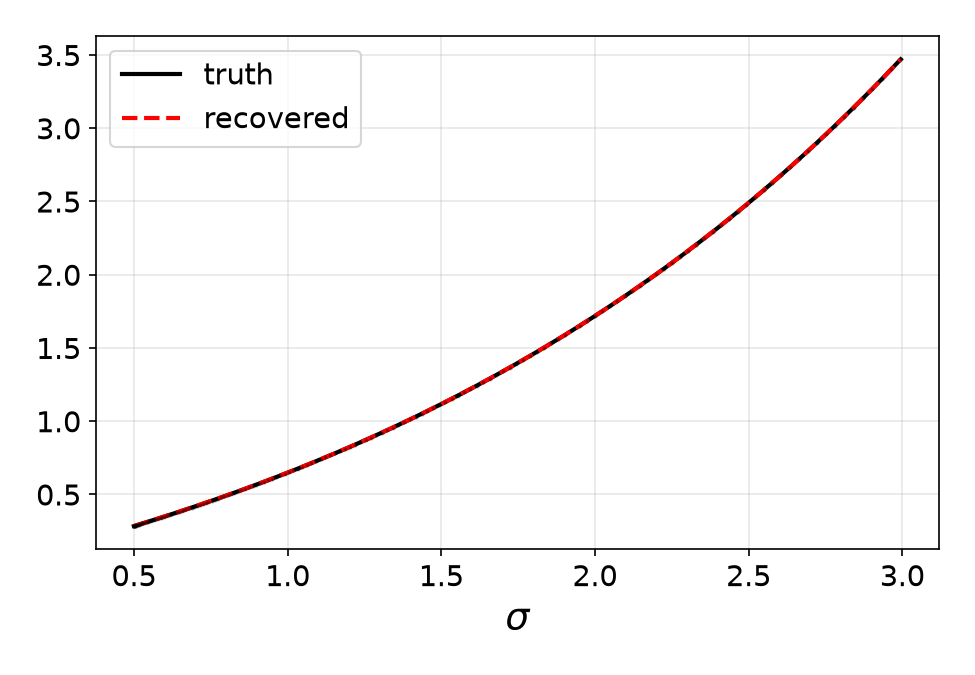}
\end{minipage}\hfill
\begin{minipage}[t]{0.30\textwidth}
  \centering
  \includegraphics[width=\linewidth]{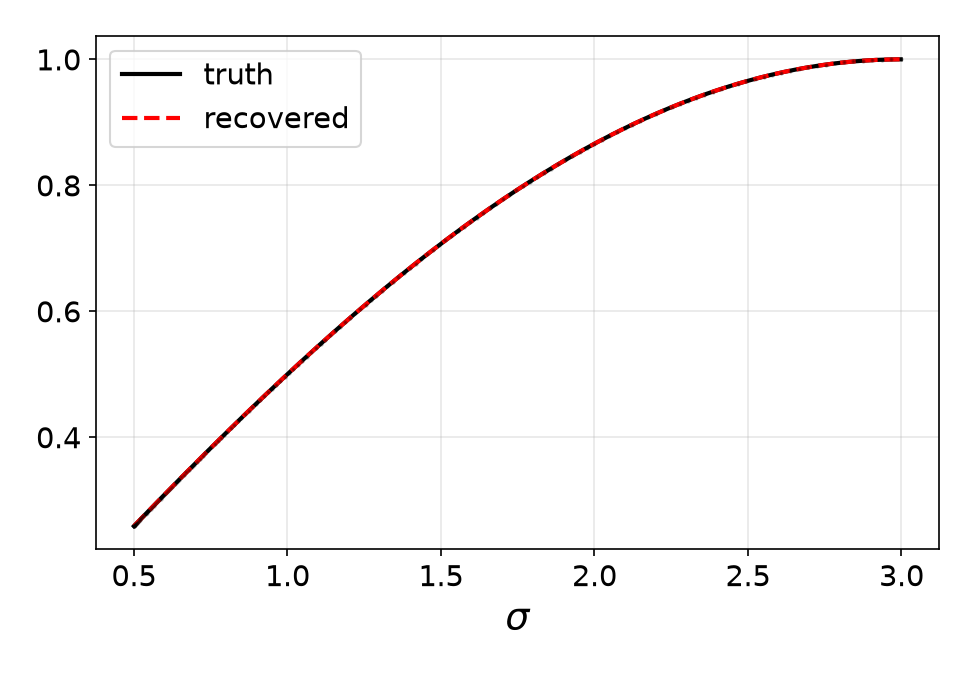}
\end{minipage}
\begin{minipage}[t]{0.33\textwidth}
  \centering
  \includegraphics[width=\linewidth]{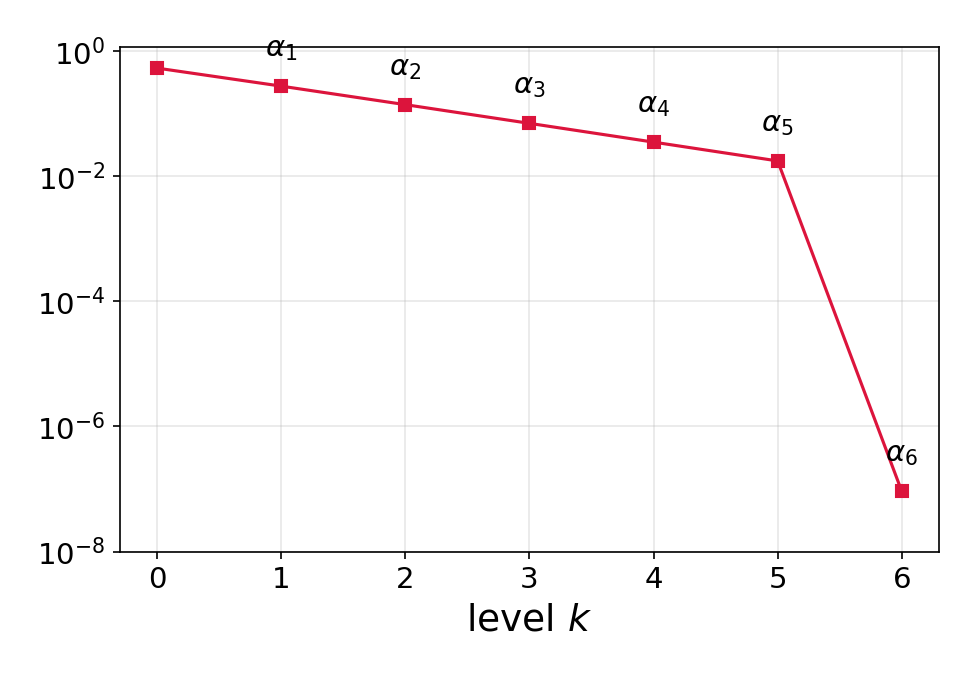}
\end{minipage}\hfill
\begin{minipage}[t]{0.33\textwidth}
  \centering
  \includegraphics[width=\linewidth]{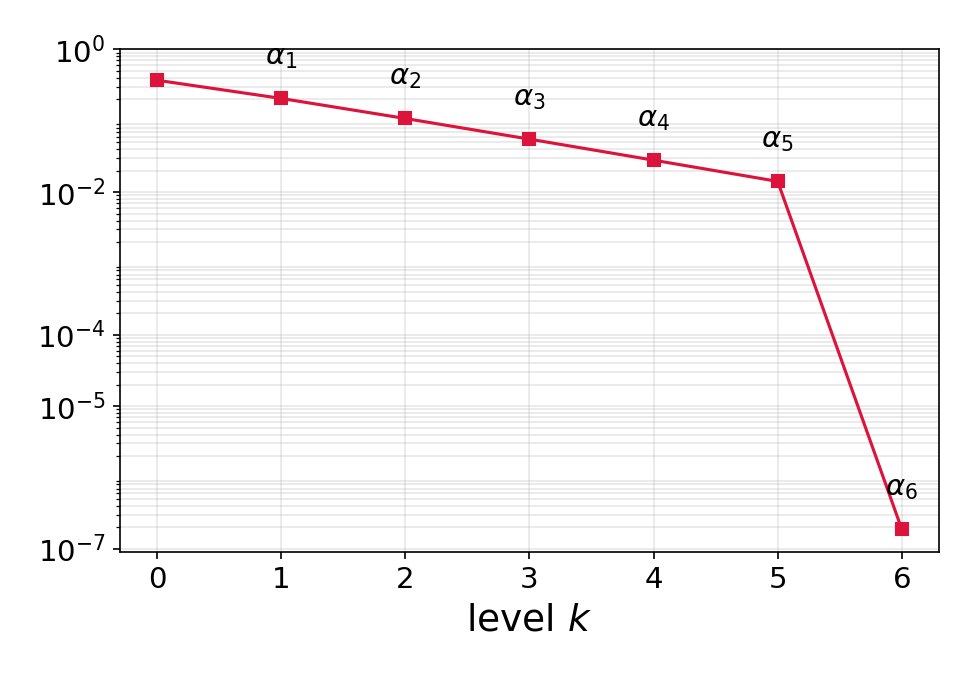}
\end{minipage}\hfill
\begin{minipage}[t]{0.33\textwidth}
  \centering
  \includegraphics[width=\linewidth]{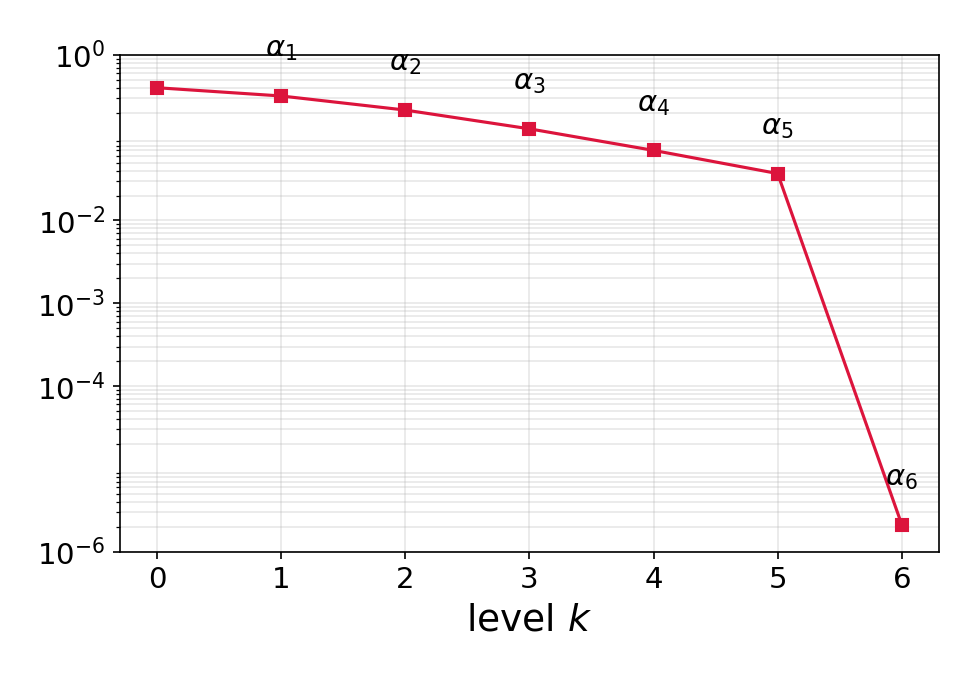}
\end{minipage}

\caption{\label{fig:dcis_results}\emph{Reconstruction results for Example \ref{example:nonlinearity}.
Top row (left to right): recovered nonlinearity (red dashed curve) versus
the true one (black solid curve). Bottom row (left to right): the
corresponding $L^2$-error $e_2$ versus the homotopy level $k$ (log scale);
the reduction factors are marked above the data points.}}
\end{figurehere}

\subsection{Non-symmetric case in two dimensions}\label{subsec:non-rad}

In this subsection, we assess the proposed reconstruction method in a two-dimensional, non-symmetric setting using the method of manufactured
solutions (MMS). The MMS~\cite{banks2019high} provides a controlled benchmark for verifying the numerical implementation and evaluating the reconstruction accuracy, particularly when closed-form solutions of the original nonlinear free-boundary problem are not
available. Accordingly, we augment system~\eqref{eq:DCIS:full} with suitable
manufactured source terms and obtain the following modified system:
%In this subsection, we will demonstrate the reconstruction of the nutrient consumption rate for the non-symmetric case in two dimensions via the method of manufactured solutions (MMS). The MMS~\cite{banks2019high} is a verification technique in which source terms are added to the governing equations so that a chosen analytical function becomes an exact solution. This approach is particularly valuable for nonlinear or complex systems where exact solutions to the unforced equations cannot be constructed. With MMS, the system \eqref{eq:DCIS:full}  becomes in the following form:
\begin{equation}\label{sys:MMS}
\left\{
\begin{aligned}
\p_t \sigma - \Delta \sigma &= -f(\sigma) + S_\sigma,\quad -\Delta \varpi = \mu(\sigma - \sigma^\star) + S_\varpi && \text{in}\quad \Omega(t), \\
V_{\n} &= -\partial_{\n} \varpi + S_\Gamma,\quad \varpi = \gamma \kappa_\Gamma + S_\kappa, \quad \sigma = \sigma_b && \text{on}\quad \Gamma(t), \\
\partial_{\n} \varpi &= 0, \quad \partial_{\n} \sigma = 0 && \text{on}\quad \Sigma, \\
\Gamma(0) &= \Gamma_e, \quad \sigma(\cdot, 0) = \sigma_0(\cdot) && \text{in}\quad\; \Omega_0,
\end{aligned}
\right.
\end{equation}
where $S_\sigma$, $S_\varpi$, $S_\Gamma$ and $S_\kappa$ are the added source terms, and the parameters are selected as $\mu = 1$, $T=1$, $\sigma^\star = 0$ and $\gamma = 1$. The initial condition is set as $\sigma_0=0$, the boundary condition as $\sigma_b=c(t)(\cos \theta+1)\,(2-(2-r)^2)/4\mid_{\Gamma(t)} $ with 
$c(t)=(1-e^{-t})/(1-e^{-1})$. In addition, we choose 
\begin{equation}
\sigma = c(t)(\cos \theta+1)\,(2-(2-r)^2)/4, \quad   x_0 = (0,2), 
\quad I=[0,1]. 
\end{equation}

\noindent We still use the same $\mathscr S_N$ as in the previous case with $N=4$, and the
mesh-sizes for time and space are $1/80$ and $1/40$, respectively.  In addition, we apply Algorithm~\ref{alg:hybrid} to recover the
true nonlinearity $f^\dagger$ with the following settings:
\begin{equation*}
f_0\equiv 0.5,\quad
\alpha_k=2^{-k}\ \ (k=0,1,\cdots,10),\qquad \alpha_{11}=0,\qquad \underline{\alpha}=0.2.
\label{eq:settings2}
\end{equation*}

\begin{Example}\label{example:multi}
\textup{(i)} $f^\dag(\sigma)=\sigma^3$;
\quad \textup{(ii)}  $f^\dag(\sigma)=\exp(\sigma/2)-1$. 
\quad \textup{(iii)}  $f^\dag(\sigma)=1-e^{-2\sigma}$. 
\end{Example}

Figure~\ref{fig:inv_results} displays the reconstruction results for Example \ref{example:multi}: in each case, the recovered nonlinearity (red dashed curve) coincides with the true one (black solid curve). The bottom row reports the relative $L^2$-error $e_2$ against the homotopy level $k$ (log scale). After a steady decrease in the Picard regime ($k\le 10$), the error drops sharply at the final level $k=11$, where the Sperner search at $\alpha_{11}=0$ is applied, in the second and third cases, whereas no such sharp drop occurs in the first case. The mechanism is the same as in the radially symmetric case of Subsection \ref{subsec:rad}: the fixed point of $\mathbb T_{N,\alpha}$ varies with the level $\alpha$, and $e_2$ is measured against the fixed point at $\alpha=0$.

\begin{figurehere}
\centering
\begin{minipage}[t]{0.30\textwidth}
  \centering
  \includegraphics[width=\linewidth]{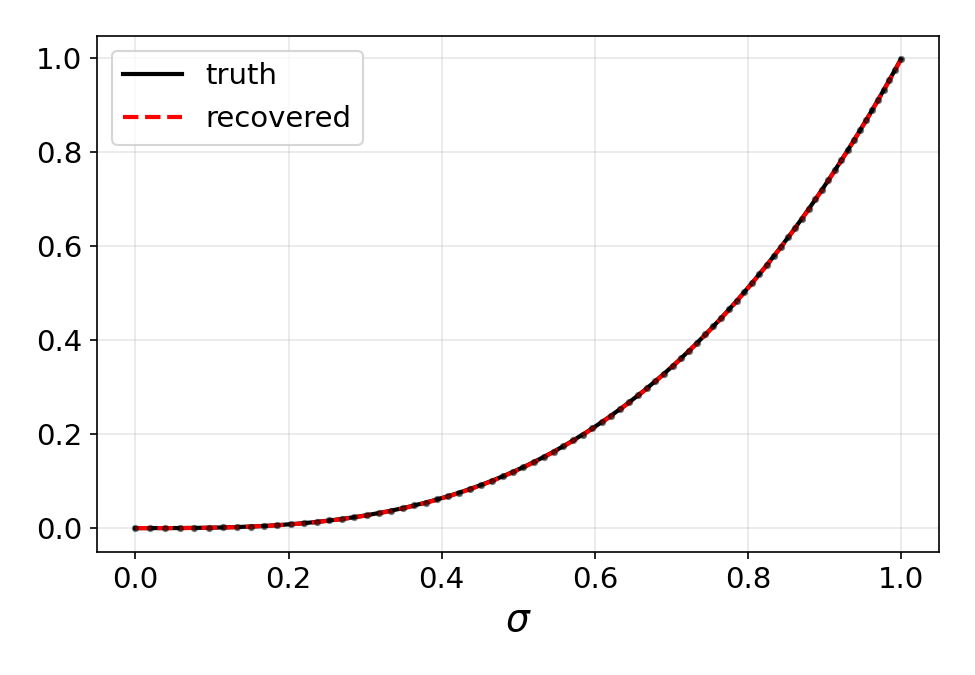}
\end{minipage}\hfill
\begin{minipage}[t]{0.30\textwidth}
  \centering
  \includegraphics[width=\linewidth]{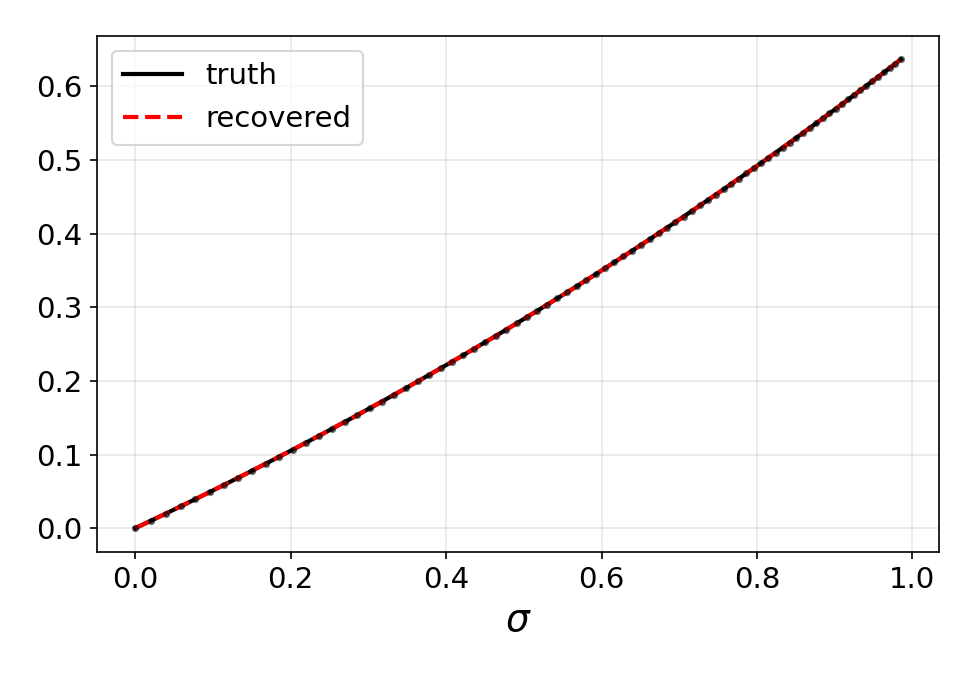}
\end{minipage}\hfill
\begin{minipage}[t]{0.30\textwidth}
  \centering
  \includegraphics[width=\linewidth]{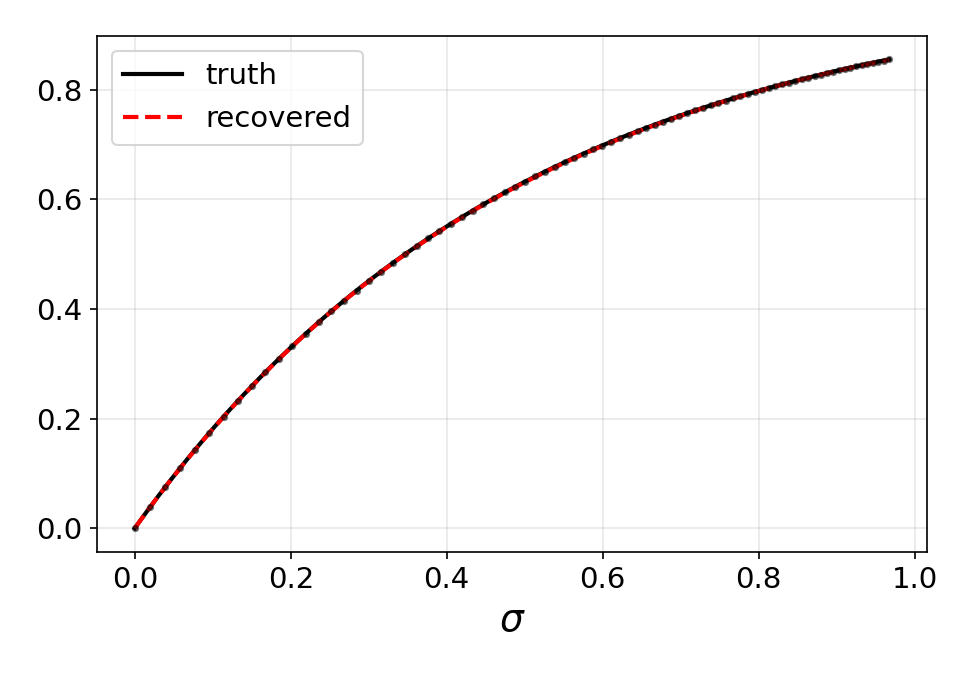}
\end{minipage}
\begin{minipage}[t]{0.33\textwidth}
  \centering
  \includegraphics[width=\linewidth]{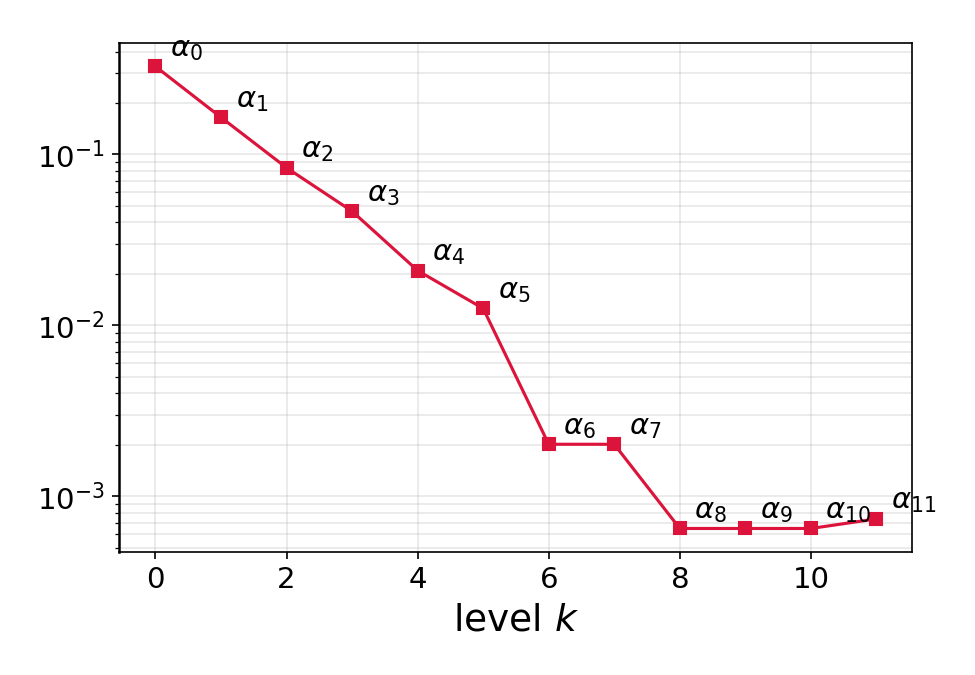}
\end{minipage}\hfill
\begin{minipage}[t]{0.33\textwidth}
  \centering
  \includegraphics[width=\linewidth]{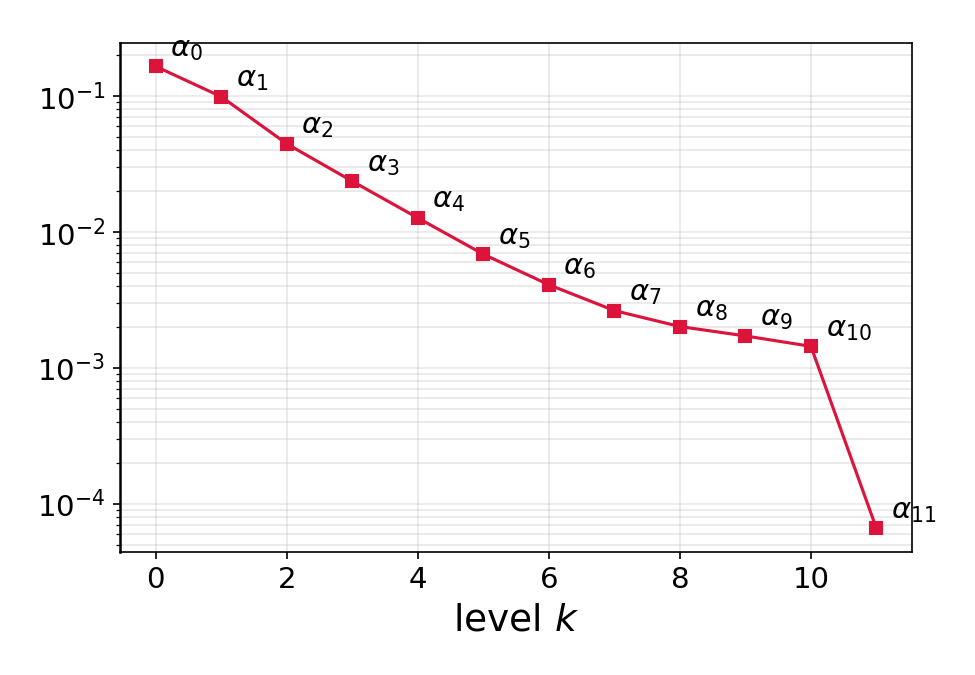}
\end{minipage}\hfill
\begin{minipage}[t]{0.33\textwidth}
  \centering
  \includegraphics[width=\linewidth]{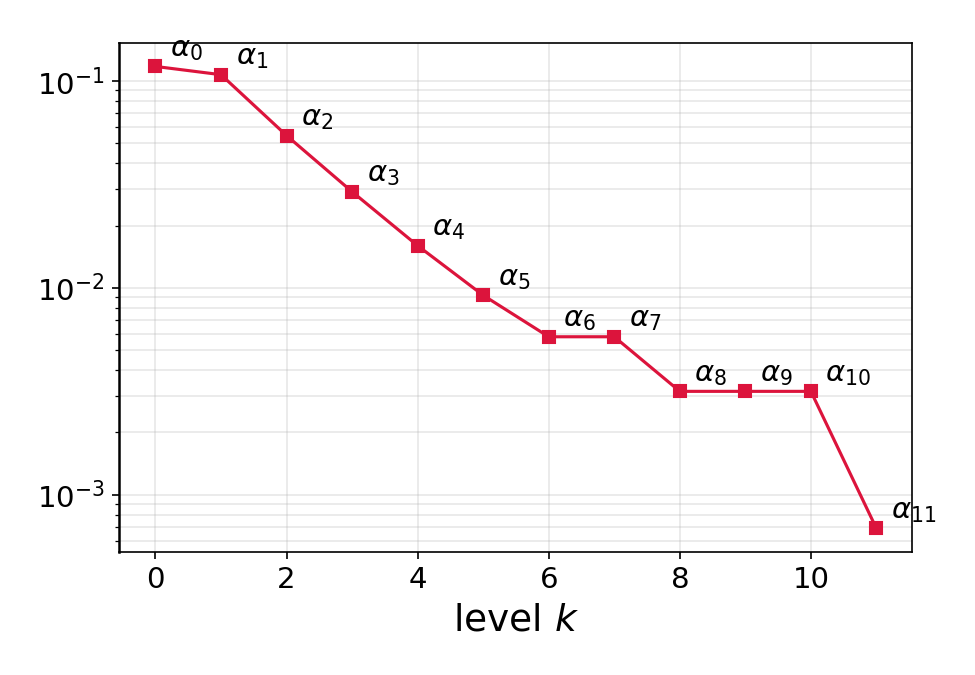}
\end{minipage}

  \caption{\em Reconstruction results for Example \ref{example:multi}.
Top row (left to right): recovered nonlinearity (red dashed curve) versus
the true one (black solid curve). Bottom row (left to right): the
corresponding $L^2$-error $e_2$ versus the homotopy level $k$ (log scale);
the reduction factors are marked above the data points.}  \label{fig:inv_results}

\end{figurehere}

\bibliography{dehansto}{}
\bibliographystyle{acm}

 \end{document}